\numberwithin{equation}{section}
\newtheorem{theorem}{Theorem}[section]
\newtheorem{lemma}[theorem]{Lemma}
\newtheorem{proposition}[theorem]{Proposition}
\newtheorem*{theorem*}{Theorem}
\theoremstyle{definition}
\newtheorem{definition}[theorem]{Definition}
\newtheorem{construction}[theorem]{Construction}
\newtheorem*{construction*}{Construction}
\newtheorem{example}[theorem]{Example}
\theoremstyle{remark}
\newtheorem{remark}[theorem]{Remark}
\definecolor{darkblue}{rgb}{0,0,0.7}
\definecolor{green}{RGB}{57,181,74}
\definecolor{violet}{RGB}{147,39,143}
\newcommand{\darkblue}{\color{darkblue}}
\newcommand{\bruhat}[2]{\mathcal{B}(#1, #2)}
\DeclareMathOperator{\inv}{inv}
\newcommand{\rhbo}[3]{\mathcal{B}_{#1}(#2, #3)}
\newcommand{\rinv}[2]{\inv_{#1}(#2)}
\newcommand{\packet}[1]{\mathsf{Pack}(#1)}
\DeclareMathOperator{\Hom}{Hom}
\DeclareMathOperator{\Sq}{Sq}
\DeclareMathOperator{\chains}{C}
\newcommand{\bd}{\partial}
\newcommand{\xla}[1]{\xleftarrow{#1}}
\newcommand{\N}{\mathbb{N}}
\newcommand{\Z}{\mathbb{Z}}
\newcommand{\ZZ}{\Z / 2\Z}
\newcommand{\op}{\mathrm{op}}
\newcommand{\defn}[1]{{\darkblue \emph{#1}}}
\newcommand{\simp}{\mathbb{\Delta}}
\newcommand{\st}{\mid}
\renewcommand{\emptyset}{\varnothing}
\newcommand{\init}[2]{A^{#1}_{#2}}
\newcommand{\initb}[2]{B^{#1}_{#2}}
\newcommand{\trmsgn}{\varepsilon}
\newcommand{\floor}[1]{\lfloor #1 \rfloor}
\newcommand{\ceil}[1]{\lceil #1 \rceil}
\newcommand{\dg}[1]{\mathsf{deg}(#1)}
\newcommand{\sz}[1]{|#1|}
\def\@tocline#1#2#3#4#5#6#7{\relax
  \ifnum #1>\c@tocdepth 
  \else
    \par \addpenalty\@secpenalty\addvspace{#2}%
    \begingroup \hyphenpenalty\@M
    \@ifempty{#4}{%
      \@tempdima\csname r@tocindent\number#1\endcsname\relax
    }{%
      \@tempdima#4\relax
    }%
    \parindent\z@ \leftskip#3\relax \advance\leftskip\@tempdima\relax
    \rightskip\@pnumwidth plus4em \parfillskip-\@pnumwidth
    #5\leavevmode\hskip-\@tempdima
      \ifcase #1
       \or\or \hskip 1em \or \hskip 2em \else \hskip 3em \fi%
      #6\nobreak\relax
    \dotfill\hbox to\@pnumwidth{\@tocpagenum{#7}}\par
    \nobreak
    \endgroup
  \fi}
\title[Steenrod operations via higher Bruhat orders]{Steenrod operations \\ via higher Bruhat orders}
\author{Guillaume Laplante-Anfossi}
\address{Centre for Quantum Mathematics, Syddansk Universitet, Campusvej 55, Odense, Denmark}
\email{glaplanteanfossi@imada.sdu.dk}
\urladdr{https://guillaumelaplante-anfossi.github.io/}
\author{Nicholas J. Williams}
\address{Department of Pure Mathematics and Mathematical Statistics, Centre for Mathematical Sciences, University of Cambridge, Wilberforce Road, Cambridge, CB3 0WB, United Kingdom}
\address{Theoretical Sciences Visiting Program, Okinawa Institute of Science and Technology Graduate University, Onna, 904-0495, Japan}
\email{nw480@cam.ac.uk}
\urladdr{https://nchlswllms.github.io/}
\subjclass[2020]{55U15, 55S10, 52C22}
\begin{document}

\begin{abstract}
	The purpose of this paper is to establish a correspondence between the higher Bruhat orders of Yu.\ I. Manin and V. Schechtman, and the cup-$i$ coproducts defining Steenrod squares in cohomology.
	To any element of the higher Bruhat orders we associate a coproduct, recovering Steenrod's original ones from extremal elements in these orders. 
	Defining this correspondence involves interpreting the coproducts geometrically in terms of zonotopal tilings, which allows us to give conceptual proofs of their properties and show that all reasonable coproducts arise from our construction.
\end{abstract}

\thanks{GLA was supported by the Andrew Sisson Fund and the Australian Research Council Future Fellowship FT210100256, while NJW was supported by EPSRC grant EP/V050524/1 and is currently supported by EPSRC grant EP/W001780/1.}

\maketitle

\begin{figure}[h!]
\centerline{
\resizebox{0.75\linewidth}{!}{
\begin{tikzpicture}[xscale=1.8,scale=2,yscale=1.5]


\coordinate (e) at (0,0);
\coordinate (0) at (-1,0.75);
\coordinate (01) at (-1.5,2);
\coordinate (012) at (-1.5,3);
\coordinate (0123) at (-1,4.25);
\coordinate (01234) at (0,5);
\coordinate (1234) at (1,4.25);
\coordinate (234) at (1.5,3);
\coordinate (34) at (1.5,2);
\coordinate (4) at (1,0.75);

\draw (e) -- (0) -- (01) -- (012) -- (0123) -- (01234) -- (1234) -- (234) -- (34) -- (4) -- (e);


\coordinate (2) at (0,1);
\coordinate (02) at (-1,1.75);
\coordinate (23) at (0.5,2.25);
\coordinate (24) at (1,1.75);
\coordinate (023) at (-0.5,3);
\coordinate (123) at (0,3.5);


\draw (e) -- (2);
\draw (2) -- (02);
\draw (2) -- (23);
\draw (2) -- (24);
\draw (0) -- (02);
\draw (4) -- (24);
\draw (02) -- (012);
\draw (02) -- (023);
\draw (23) -- (023);
\draw (23) -- (123);
\draw (23) -- (234);
\draw (24) -- (234);
\draw (023) -- (0123);
\draw (123) -- (0123);
\draw (123) -- (1234);


\node at (e) [below = 1mm of e] {\tiny $\emptyset$};
\node at (01234) [above = 1mm of 01234] {\tiny $01234$};

\node at (0) [left = 1mm of 0] {\tiny $0$};
\node at (01) [left = 1mm of 01] {\tiny $01$};
\node at (012) [left = 1mm of 012] {\tiny $012$};
\node at (0123) [left = 1mm of 0123] {\tiny $0123$};

\node at (4) [right = 1mm of 4] {\tiny $4$};
\node at (34) [right = 1mm of 34] {\tiny $34$};
\node at (234) [right = 1mm of 234] {\tiny $234$};
\node at (1234) [right = 1mm of 1234] {\tiny $1234$};

\node at (2) [left = 1mm of 2] {\tiny $2$};
\node at (02) [right = 1mm of 02] {\tiny $02$};
\node at (24) [left = 1mm of 24] {\tiny $24$};
\node at (23) [left = 1mm of 23] {\tiny $23$};
\node at (023) [below = 3mm of 023] {\tiny $023$};
\node at (123) [above = 1mm of 123] {\tiny $123$};


\node at ($(e)!0.5!(02)$) {\color{red} $-02 \otimes 01234$};
\node at ($(e)!0.5!(24)$) {\color{red} $-24 \otimes 01234$};
\node [rotate=-65] at ($(0)!0.5!(012)$) {\color{red} $+012 \otimes 1234$};
\node at ($(2)!0.5!(023)$) {\color{red} $-023 \otimes 0134$};
\node [rotate=65] at ($(4)!0.5!(234)$) {\color{red} $+234 \otimes 0123$};
\node [rotate=30] at ($(2)!0.5!(234)$) {\color{red} $+234 \otimes 0134$};
\node at ($(02)!0.5!(0123)$) {\color{red} $-0123 \otimes 134$};
\node [rotate=-30] at ($(23)!0.5!(0123)$) {\color{red} $-0123 \otimes 014$};
\node at ($(23)!0.5!(1234)$) {\color{red} $-1234 \otimes 014$};
\node at ($(123)!0.5!(01234)$) {\color{red} $+01234 \otimes 04$};


\node [rotate=-30] at ($(e)!0.5!(0)$) [below = 1mm of $(e)!0.5!(0)$] {\color{blue} \tiny $+0 \otimes 01234$};
\node [rotate=-65] at ($(0)!0.5!(01)$) [below = 1mm of $(0)!0.5!(01)$] {\color{blue} \tiny $+01 \otimes 1234$};
\node [rotate=90] at ($(01)!0.5!(012)$) [above = 1mm of $(01)!0.5!(012)$] {\color{blue} \tiny $+012 \otimes 234$};
\node [rotate=65] at ($(012)!0.5!(0123)$) [above = 1mm of $(012)!0.5!(0123)$] {\color{blue} \tiny $+0123 \otimes 34$};
\node [rotate=30] at ($(0123)!0.5!(01234)$) [above = 1mm of $(0123)!0.5!(01234)$] {\color{blue} \tiny $+01234 \otimes 4$};

\node [rotate=30] at ($(e)!0.5!(4)$) [below = 1mm of $(e)!0.5!(4)$] {\color{blue} \tiny $-4 \otimes 01234$};
\node [rotate=65] at ($(4)!0.5!(34)$) [below = 1mm of $(4)!0.5!(34)$] {\color{blue} \tiny $+34 \otimes 0123$};
\node [rotate=-90] at ($(34)!0.5!(234)$) [above = 1mm of $(34)!0.5!(234)$] {\color{blue} \tiny $-234 \otimes 012$};
\node [rotate=-65] at ($(234)!0.5!(1234)$) [above = 1mm of $(234)!0.5!(1234)$] {\color{blue} \tiny $+1234 \otimes 01$};
\node [rotate=-30] at ($(1234)!0.5!(01234)$) [above = 1mm of $(1234)!0.5!(01234)$] {\color{blue} \tiny $-01234 \otimes 0$};


\node [rotate=90] at ($(e)!0.5!(2)$) [above = 0.1mm of $(e)!0.5!(2)$] {\color{OliveGreen} \tiny $-2 \otimes 01234$};
\node [rotate=-90] at ($(e)!0.5!(2)$) [above = 0.1mm of $(e)!0.5!(2)$] {\color{OliveGreen} \tiny $+2 \otimes 01234$};

\node [rotate=90] at ($(0)!0.5!(02)$) [above = 0.1mm of $(0)!0.5!(02)$] {\color{OliveGreen} \tiny $-02 \otimes 1234$};
\node [rotate=90] at ($(0)!0.5!(02)$) [below = 0.1mm of $(0)!0.5!(02)$] {\color{OliveGreen} \tiny $+02 \otimes 1234$};

\node [rotate=-30] at ($(2)!0.5!(02)$) [above = 0.1mm of $(2)!0.5!(02)$] {\color{OliveGreen} \tiny $-02 \otimes 0134$};
\node [rotate=-30] at ($(2)!0.5!(02)$) [below = 0.1mm of $(2)!0.5!(02)$] {\color{OliveGreen} \tiny $+02 \otimes 0134$};

\node [rotate=30] at ($(2)!0.5!(24)$) [above = 0.1mm of $(2)!0.5!(24)$] {\color{OliveGreen} \tiny $-24 \otimes 0134$};
\node [rotate=30] at ($(2)!0.5!(24)$) [below = 0.1mm of $(2)!0.5!(24)$] {\color{OliveGreen} \tiny $+24 \otimes 0134$};

\node [rotate=-90] at ($(4)!0.5!(24)$) [above = 0.1mm of $(4)!0.5!(24)$] {\color{OliveGreen} \tiny $-24 \otimes 0123$};
\node [rotate=-90] at ($(4)!0.5!(24)$) [below = 0.1mm of $(4)!0.5!(24)$] {\color{OliveGreen} \tiny $+24 \otimes 0123$};

\node [rotate=65] at ($(24)!0.5!(234)$) [above = 0.1mm of $(24)!0.5!(234)$] {\color{OliveGreen} \tiny $-234 \otimes 013$};
\node [rotate=65] at ($(24)!0.5!(234)$) [below = 0.1mm of $(24)!0.5!(234)$] {\color{OliveGreen} \tiny $+234 \otimes 013$};

\node [rotate=65] at ($(2)!0.5!(23)$) [above = 0.1mm of $(2)!0.5!(23)$] {\color{OliveGreen} \tiny $-23 \otimes 0134$};
\node [rotate=65] at ($(2)!0.5!(23)$) [below = 0.1mm of $(2)!0.5!(23)$] {\color{OliveGreen} \tiny $+23 \otimes 0134$};

\node [rotate=65] at ($(02)!0.5!(023)$) [above = 0.1mm of $(02)!0.5!(023)$] {\color{OliveGreen} \tiny $+023 \otimes 134$};
\node [rotate=65] at ($(02)!0.5!(023)$) [below = 0.1mm of $(02)!0.5!(023)$] {\color{OliveGreen} \tiny $-023 \otimes 134$};

\node [rotate=-30] at ($(23)!0.5!(023)$) [above = 0.1mm of $(23)!0.5!(023)$] {\color{OliveGreen} \tiny $+023 \otimes 014$};
\node [rotate=-30] at ($(23)!0.5!(023)$) [below = 0.1mm of $(23)!0.5!(023)$] {\color{OliveGreen} \tiny $-023 \otimes 014$};

\node [rotate=-65] at ($(02)!0.5!(012)$) [above = 0.1mm of $(02)!0.5!(012)$] {\color{OliveGreen} \tiny $+012 \otimes 134$};
\node [rotate=-65] at ($(02)!0.5!(012)$) [below = 0.1mm of $(02)!0.5!(012)$] {\color{OliveGreen} \tiny $-012 \otimes 134$};

\node [rotate=30] at ($(23)!0.5!(234)$) [above = 0.1mm of $(23)!0.5!(234)$] {\color{OliveGreen} \tiny $-234 \otimes 014$};
\node [rotate=30] at ($(23)!0.5!(234)$) [below = 0.1mm of $(23)!0.5!(234)$] {\color{OliveGreen} \tiny $+234 \otimes 014$};

\node [rotate=-65] at ($(023)!0.5!(0123)$) [above = 0.1mm of $(023)!0.5!(0123)$] {\color{OliveGreen} \tiny $+0123 \otimes 14$};
\node [rotate=-65] at ($(023)!0.5!(0123)$) [below = 0.1mm of $(023)!0.5!(0123)$] {\color{OliveGreen} \tiny $-0123 \otimes 14$};

\node [rotate=-65] at ($(23)!0.5!(123)$) [above = 0.1mm of $(23)!0.5!(123)$] {\color{OliveGreen} \tiny $+123 \otimes 014$};
\node [rotate=-65] at ($(23)!0.5!(123)$) [below = 0.1mm of $(23)!0.5!(123)$] {\color{OliveGreen} \tiny $-123 \otimes 014$};

\node [rotate=30] at ($(123)!0.5!(1234)$) [above = 0.1mm of $(123)!0.5!(1234)$] {\color{OliveGreen} \tiny $+1234 \otimes 04$};
\node [rotate=30] at ($(123)!0.5!(1234)$) [below = 0.1mm of $(123)!0.5!(1234)$] {\color{OliveGreen} \tiny $-1234 \otimes 04$};

\node [rotate=-30] at ($(123)!0.5!(0123)$) [above = 0.1mm of $(123)!0.5!(0123)$] {\color{OliveGreen} \tiny $+0123 \otimes 04$};
\node [rotate=-30] at ($(123)!0.5!(0123)$) [below = 0.1mm of $(123)!0.5!(0123)$] {\color{OliveGreen} \tiny $-0123 \otimes 04$};

\end{tikzpicture}}
}
\label{fig:big_ex}
\end{figure}


\tableofcontents


\section{Introduction}

Steenrod operations, introduced by N. E. Steenrod in \cite{s47}, are invariants refining the algebra structure given by the cup product on the cohomology of a space.
They are defined via a family of cup-$i$ coproducts, which correct homotopically the lack of cocommutativity of the Alexander--Whitney diagonal at the chain level. 
The more refined homotopical information provided by Steenrod operations allows one to distinguish non-homotopy equivalent spaces with isomorphic cohomology rings (for example, the suspensions of $\mathbb{C}P^2$ and $S^2 \vee S^4$).
Steenrod operations are universal and constitute a central, classical tool in homotopy theory \cite{SteenrodEpstein63,MosherTangora}.
More recently, they were shown to be part of an $E_\infty$-algebra structure on the cochains of a space $X$ \cite{ms03,BergerFresse04}, which encodes faithfully its homotopy type when $X$ is of finite type and nilpotent \cite{Mandell01,Mandell06}. 

On the other hand, the higher Bruhat orders are a family of posets introduced by Yu.\ I. Manin and V.~Schechtman \cite{ms89} generalising the weak Bruhat order on the symmetric group.
The elements of the $(n+1)$-th higher Bruhat order are equivalence classes of maximal chains in the $n$-th higher Bruhat order, with covering relations given by higher braid moves.
The original motivation for introducing these orders was to study hyperplane arrangements and generalised braid groups, but they have subsequently found applications in many different areas.
They appear in the contexts of Soergel bimodules \cite{e16}, quantisations of the homogeneous coordinate ring of the Grassmannian \cite{lz98}, and KP solitons \cite{dm12}.
The higher Bruhat orders also provide a framework for studying social choice in economics \cite{gr08}.

In this paper, we show that these two apparently very different objects are in fact combinatorially the same: the higher Bruhat orders describe precisely Steenrod cup-$i$ coproducts and their relations. 
This provides a possible explanation for the fact that, in the words of A. Medina-Mardones, ``cup-$i$ products of Steenrod seem to be combinatorially fundamental'' \cite[Sec.~3.3]{m21}.
Let $\bruhat{[0,n]}{i + 1}$ be the $(i + 1)$-dimensional higher Bruhat order on the set $[0,n]$, and let $\Delta_{i}, \Delta_{i}^\op  \colon \chains_{\bullet}(\simp^n) \to \chains_{\bullet}(\simp^n) \otimes \chains_{\bullet}(\simp^n)$ denote the Steenrod cup-$i$ coproduct and its opposite on the chain complex of the standard simplex. 

\begin{theorem*}[{\cref{const}, \cref{lem:boundary=steenrod,thm:homotopy_formula,thm:all_coproducts}}]
\label{thm:int:main}
For every element $U \in \bruhat{[0, n]}{i + 1}$, there is a coproduct \[\Delta_{i}^{U}\colon \chains_{\bullet}(\simp^n) \to \chains_{\bullet}(\simp^n) \otimes \chains_{\bullet}(\simp^n)\] which gives a homotopy between $\Delta_{i - 1}$ and $\Delta_{i - 1}^\op$.
 If $U_{\min}$ and $U_{\max}$ are the maximal and minimal elements of $\bruhat{[0,n]}{i + 1}$, then $\{\Delta_{i}^{U_{\min}}, \Delta_{i}^{U_{\max}}\} = \{\Delta_{i}, \Delta_{i}^{\mathrm{op}}\}$.
 
Moreover, every coproduct on $\chains_\bullet(\simp^n)$ giving a homotopy between $\Delta_{i - 1}$ and $\Delta_{i - 1}^{\mathrm{op}}$ arises in this way, so long as it does not contain redundant terms.
\end{theorem*}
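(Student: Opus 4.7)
The plan proceeds in four stages, aligned with the four labels in the statement.

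First (\cref{const}), I would explicitly construct $\Delta_i^U$ for each $U \in \bruhat{[0,n]}{i+1}$. The key input is that elements of the higher Bruhat order admit two equivalent descriptions: combinatorially as admissible families of $(i+2)$-subsets of $[0,n]$, and geometrically as zonotopal tilings of the cyclic zonotope $Z(n+1, i+1)$. I would then define $\Delta_i^U = \sum \pm\, \sigma \otimes \tau$ as a signed sum indexed by the $(i+2)$-subsets in $U$, equivalently by the top-dimensional cells of the tiling: each such cell determines a pair of subsimplices $\sigma,\tau \subseteq [0,n]$ by splitting its complement in the prescribed way, and the sign is read off from the combinatorial orientation of the cell. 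This matches the pattern visible in the figure, where red cup-$1$ terms sit inside the tiling and the blue cup-$0$ terms live on its boundary hexagon.

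Second (\cref{lem:boundary=steenrod,thm:homotopy_formula}), I would verify the homotopy identity by expanding the Hom-complex differential applied to $\Delta_i^U$ and matching terms. The geometric picture makes this natural: each boundary face of a tensor summand corresponds to a codimension-one face of a cell in the tiling. Interior faces appear in exactly two adjacent cells with opposite induced orientations and cancel in pairs (this is the role of the green terms in the figure), so the surviving terms lie on $\partial Z(n+1, i+1)$. The cyclic zonotope's boundary decomposes into an upper and lower hemisphere, and these two hemispheres index precisely the tensor terms of $\Delta_{i-1}^{\op}$ and $\Delta_{i-1}$ respectively.

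Third, for the extremal identification, I would observe that $U_{\min}$ and $U_{\max}$ correspond to the lexicographic and anti-lexicographic admissible families on $\binom{[0,n]}{i+2}$. Under this identification, unwinding \cref{const} returns exactly Steenrod's original cup-$i$ formulas, since the lexicographic tiling is the unique one whose cells collect subsimplices in the pattern of the classical Alexander--Whitney-style recipe for $\Delta_i$; the opposite ordering yields $\Delta_i^{\op}$.

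Fourth (\cref{thm:all_coproducts}), I would establish the exhaustion statement, and this I expect to be the principal obstacle. Given a coproduct $D$ on $\chains_{\bullet}(\simp^n)$ without redundant terms such that $\partial \circ D \pm (D \otimes \id + \id \otimes D) \circ \partial = \Delta_{i-1} - \Delta_{i-1}^{\op}$, I would extract from each tensor summand of $D$ a candidate $(i+2)$-subset of $[0,n]$, producing a family $U(D) \subseteq \binom{[0,n]}{i+2}$, and then show $U(D) \in \bruhat{[0,n]}{i+1}$. The delicate point is verifying Manin--Schechtman admissibility: the no-redundancy hypothesis forces every term of $D$ to be required by the homotopy equation, so the cancellation of boundary terms pairs up codimension-one faces across putative cells, and this pairing is exactly the admissibility condition on triples of consecutive subsets. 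A short induction on $i$ then promotes this local condition to the full higher Bruhat axioms, after which a direct comparison with \cref{const} gives $D = \Delta_i^{U(D)}$ and completes the correspondence.
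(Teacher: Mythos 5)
The central difficulty with your plan is a type error in Stage~1 that would derail the entire construction. You propose defining $\Delta_i^U$ as a ``signed sum indexed by the $(i+2)$-subsets in $U$, equivalently by the top-dimensional cells of the tiling.'' These two indexing sets are \emph{not} the same. The consistent set $U$ is a subset of $\binom{[0,n]}{i+2}$ (it is the inversion set, recording which packets are in reverse lexicographic order, and its cardinality ranges from $0$ to $\binom{n+1}{i+2}$). The top-dimensional cells (cubes) of the cubillage $\mathcal{Q}_U$, on the other hand, are always in bijection with $\binom{[0,n]}{i+1}$ via their sets of generating vectors; every cubillage of $Z([0,n],i+1)$ has exactly $\binom{n+1}{i+1}$ cubes, regardless of $U$. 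The paper's construction sums over $L \in \binom{[0,n]}{i+1}$, with $U$ entering only by determining the initial vertex of the cube with generating vectors $L$ (\cref{prop:init_vert}). If you instead sum over the elements of $U$, your coproduct at $U = \emptyset$ would be identically zero, contradicting the claimed identification of $\Delta_i^{U_{\min}}$ with (a signed version of) $\Delta_i$. This is not a cosmetic issue: it is the distinction between ``inversion data'' and ``tiles,'' and conflating them means your Stage~1 does not produce the object the theorem is about.

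There are two further gaps worth flagging, both in Stage~4. First, you propose to ``extract from each tensor summand of $D$ a candidate $(i+2)$-subset'': the natural invariant of a term $\sigma \otimes \tau$ is its set of generating vectors $\sigma \cap \tau$, an $(i+1)$-subset, together with the initial vertex $\sigma \setminus \tau$; recovering membership in $U$ for individual $(i+2)$-subsets requires the parity considerations of \cref{prop:init_vert}, not a direct read-off. Second, your suggested ``induction on $i$'' does not match the structure of the problem --- the paper's proof of \cref{thm:all_coproducts} proceeds by induction on the support $S \subseteq [0,n]$, which is forced because the definition of $\Delta_i^U$ on a face $S$ uses the contraction $U/p$, and the homotopy equation on $S$ refers to $\Delta_i'$ restricted to proper subfaces $S' \subsetneq S$. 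An induction on $i$ gives no leverage here since the hypothesis only concerns a single value of $i$. You would need to recast Stage~4 as an induction on $|S|$, at each step showing that the terms supported on $S$ form a cubillage of $Z(S,i+1)$ compatible under contraction with those already found on proper subfaces. Stages~2 and~3 are essentially sound as sketches, modulo signs depending on the parity of $i$ that you elide.
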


From the perspective of the higher Bruhat orders, the fact that every coproduct $\Delta_{i}^{U}$ gives a homotopy between $\Delta_{i - 1}$ and $\Delta_{i - 1}^{\op}$ corresponds to the fact that the elements of $\bruhat{[0, n]}{i + 1}$ are equivalence classes of maximal chains in $\bruhat{[0, n]}{i}$.
In fact, from any covering relation $U \lessdot V$ in $\bruhat{[0,n]}{i + 1}$, one can construct a chain homotopy between $\Delta_i^U$ and $\Delta_i^V$ (\cref{const:cov_rel}).
Moreover, as we demonstrate in \cref{sect:squares}, any coproduct $\Delta_i^U$ defines a Steenrod square $\Sq_i^U$ in cohomology, and for any two $U,V \in \bruhat{[0,n]}{i + 1}$ we have $\Sq_i^U=\Sq_i^V$ (\cref{thm:Steenrod-squares}).

Using the geometric interpretation of the elements of the higher Bruhat orders as \emph{zonotopal tilings} (\cref{sect:back:hbo}), we obtain a clear geometric interpretation of homotopies (see the front page, as well as \cref{fig:z32_lower_upper}): terms in the image of the coproducts correspond to faces of tilings, and the boundary map on $\Hom(\chains_\bullet(\simp^n),\chains_\bullet(\simp^n)\otimes \chains_\bullet(\simp^n))$ corresponds to the cubical boundary map.
This gives a conceptual proof of the fundamental property that the cup-$i$ coproduct gives a homotopy between the cup-$(i - 1)$ coproduct and its opposite, in contrast to the involved combinatorial proofs in the literature.

The above results allow us to show that for homology of simplicial complexes non-trivial coproducts from other elements of the higher Bruhat orders exist (\cref{sect:imp:simp_comp}), whereas for singular homology only the Steenrod coproducts are possible (\cref{sect:imp:sing}).
We also show how one can find coproducts giving homotopies between $\Delta_{i}^U$ and its opposite, using the ``reoriented" higher Bruhat orders of \cite{z93,fw00} (\cref{sect:imp:reorient}).
In his recent axiomatic characterisation of Steenrod's cup-$i$ products,
A. Medina-Mardones shows that, under a certain natural notion of isomorphism, there is only one cup-$i$ construction \cite{m22}. 
Our cup-$i$ constructions $\Delta_i^U$ provide other choices, which fall outside the scope of this result, but are still adequate for the purpose of defining Steenrod squares in cohomology (\cref{sect:squares}).
New formulas for Steenrod squares could be of interest from the computational point of view \cite{medina-mardones23}, notably in the field of Khovanov homology \cite{canteromoran20}.
At the same time, our unicity result for singular homology (\cref{thm:sing}) concurs with the one of \cite{m22}.

A consequence of the present results is that there is a dictionary between the Steenrod coproducts and two important families of objects in other areas of mathematics, which are already known to correspond to higher Bruhat orders. 
The first one is a family of higher dimensional versions of the Yang--Baxter (or ``triangle'') equation, called the \emph{simplex equations} \cite{s95}, which have been a subject of renewed interest in the recent physics literature \cite{BazhanovSixvertex,KunibaTetrahedron,YagiIntegrable}.
The second one is a family of strict $\omega$-categories called \emph{cubical orientals}, which define the cubical $\omega$-categorical nerve \cite{s91}.
The higher categorical structure of the higher Bruhat orders was already observed in \cite[Sec.~3]{ms89}, and their correspondence with cubical orientals can be found in \cite{kv91}, see also \cite{lmp}.

Finally, it seems that the cubical structure of the original Steenrod cup-$i$ products is already present in the topological $E_\infty$-operad defined in \cite{Kaufmann09}, see Figure~11 therein. 
It would also be interesting to understand the possible connection between our construction and the permutahedral structures on $E_2$-operads unravelled in~\cite{KaufmannZhang17}.

\subsection*{Outline}

We begin the paper by giving background on Steenrod operations in Section~\ref{sect:back:steenrod} and on the higher Bruhat orders in Section~\ref{sect:back:hbo}.
In Section~\ref{sect:main}, we give our main construction, showing how to build coproducts from elements of the higher Bruhat orders, proving their key properties, and showing that all reasonable coproducts arise in this way.
After this, in Section~\ref{sect:imp}, we give some extensions of our construction in the previous section, discussing how it impacts simplicial and singular cohomology, and extending it to the reoriented higher Bruhat orders.
We finish by showing that the coproducts we construct all induce the same Steenrod squares.

\subsection*{Acknowledgements}

We would first and foremost like to thank Hugh Thomas for making the introduction that led to this collaboration.
The initial discussions for this paper took place whilst GLA was visiting the Max Planck Institute in Bonn, which we thank for the hospitality.
Thank you also to Arun Ram for helpful suggestions.
GLA thanks Anibal Medina-Mardones for useful discussions on Steenrod squares, as well as Fabian Haiden and Vivek Shende for the opportunity to speak about a preliminary version of this work in Odense, and for interesting discussions.
Part of this research was conducted while NJW was visiting the Okinawa Institute of Science and Technology (OIST) through the Theoretical Sciences Visiting Program (TSVP).


\section{Background}
\label{sect:back}

In this section, we recall the definitions of Steenrod operations and higher Bruhat orders, and set up notation. 

\subsection{Steenrod operations}
\label{sect:back:steenrod}

We start by recalling basic conventions and notation. 


\subsubsection{Chain complexes}
\label{sect:back:at_conv}

By a \defn{chain complex} $C$ we mean an $\N$-graded $\mathbb{Z}$-module with linear maps
\[
C_0 \xla{\bd_1} C_1 \xla{\bd_2} C_2 \xla{\bd_3} \dotsb
\]
satisfying $\bd_p \circ \bd_{p+1} = 0$ for each $p \in \N$.
As usual, we refer to $\bd_p$ as the $p$-th \defn{boundary map} and suppress the subscript when convenient.
A degree-$i$ \defn{morphism of chain complexes} $f \colon C \to C'$, referred to as a degree-$i$ \defn{chain map}, is a set of morphisms of $\mathbb{Z}$-modules $f_{p} \colon C_{p} \to C'_{p + i}$ satisfying $\bd'_{p+i+1} f_{p+1} = f_p \bd_{p+1}$ for $p \in \N$.

The category of chain complexes of $\mathbb{Z}$-modules is endowed with a tensor product, whose degree-$r$ component is  $(X \otimes Y)_r := \oplus_{p + q = r} X_p \otimes Y_q$, and whose differential is defined by $\partial(x \otimes y) := \partial(x)\otimes y + (-1)^{\dg{x}} x \otimes \partial(y)$. 
The symmetry isomorphism~$T \colon X \otimes Y \to Y \otimes X$, which is part of a symmetric monoidal structure, is defined by $T(x \otimes y):=(-1)^{\dg{x}\dg{y}}y \otimes x$.


\subsubsection{Steenrod coalgebra}\label{sect:back:steenrod_co}

We denote the standard $n$-simplex $\simp^n := \{(x_0, \ldots, x_n) \in \mathbb{R}^{n + 1} \st x_0 + \cdots + x_{n} = 1, x_{i} \geqslant 0 \}$.
We refer to faces of the $n$-simplex using their vertex sets, where we use the notation $[p, q] := \{p, p + 1, \dots, q\}$ and $(p, q) := [p, q] \setminus \{p, q\}$.
When we give a set $\{v_{0}, v_{1}, \dots, v_{q}\} \subseteq [0, n]$, we mean that the elements are ordered $v_{0} < v_{1} < \dots < v_{q}$.

We will consider the $\mathbb{Z}$-module given by the cellular chains $\chains_\bullet(\simp^n)$ on the standard $n$-simplex. 
This chain complex has as basis the faces of $\simp^n$, whose degree is given by the dimension (for example, the face $\{v_0, \ldots, v_q\}$ has dimension $q$). 
The boundary map of this chain complex is given by \[ \partial(\{v_0, \ldots, v_q \}) := \sum_{p = 0}^{q}(-1)^{p}\{v_0,\ldots,\hat v_p, \ldots, v_q\} \ . \]

An \defn{overlapping partition} $\mathcal{L} = (L_{0}, L_{1}, \dots, L_{i+1})$ of $[0, n]$ is a family of intervals $L_{p} = [l_{p}, l_{p+1}]$ such that $l_{0} = 0$, $l_{i+2} = n$, and for each $0 < p < i+1$ we have $l_{p} < l_{p+1}$. 
For $i \geqslant 0$, the \defn{Steenrod cup-$i$ coproduct} is the degree-$i$ linear map $\Delta_i \colon \chains_\bullet(\simp^n) \to \chains_\bullet(\simp^n) \otimes \chains_\bullet(\simp^n)$ defined on the top face by 
\[\Delta_i([0, n]) := \sum_{\mathcal{L}} (-1)^{\varepsilon(\mathcal{L})} (L_{0} \cup L_{2} \cup \cdots) \otimes (L_{1} \cup L_{3} \cup \cdots) \ , \] 
where the sum is taken over all overlapping partitions of $[0, n]$ into $i + 2$ intervals.
By convention, one sets $\Delta_{-1}:=0$.
If $n \leqslant i - 1$, there are no such overlapping partitions, and the coproduct is zero.
Denoting by $w_{\mathcal{L}}$ the shuffle permutation putting $0,1, \dots, n$ into the order \[[0, l_{1}], [l_{2}, l_{3}], \dots, (l_{1}, l_{2}), (l_{3}, l_{4}), \dots,\] the sign is given by~$\varepsilon(\mathcal{L}) := \mathsf{sign}(w_{\mathcal{L}}) + in$.
The coproduct $\Delta_i$ is then defined similarly on the lower-dimensional faces.
Throughout this article, we shall reserve the notation $\Delta_i$ for the Steenrod coproducts.  
As proved by N. E. Steenrod in \cite{s47}, the cup-$i$ coproducts satisfy the \defn{homotopy formula}
\begin{equation}
	\label{eq:homotopy}
	\bd \Delta_i -(-1)^i \Delta_i \bd = (1+(-1)^iT)\Delta_{i-1}   
\end{equation}
for all $i \geqslant 0$.
One can say that $\Delta_{i}$ measures the obstruction to $\Delta_{i - 1}$ being cocommutative.

\begin{remark}
Other equivalent formulas for the Steenrod coproducts are given in \cite{gr99,m23}.
\end{remark}

\begin{example}\label{ex:back:steenrod}
We give some examples of Steenrod cup-$i$ coproducts for low-dimensional simplices.
For the $0$-simplex $\simp^0$, we have $\Delta_0(0)=T\Delta_0(0)=0 \otimes 0$. 
For the $1$-simplex $\simp^1$, we have
\begin{align*}
\Delta_0(01) &= 0 \otimes 01 + 01 \otimes 1 \ , \\
	T\Delta_0(01) &= 01 \otimes 0 + 1 \otimes 01 \ , \\
	\Delta_1(01)=-T\Delta_1(01) &= - 01 \otimes 01 \ .
\end{align*}
Finally, for the $2$-simplex $\simp^2$, we have
\begin{align*}
\Delta_0(012) &= 0 \otimes 012 + 01 \otimes 12 + 012 \otimes 2 \ , \\
	T\Delta_0(012) &= 012 \otimes 0 - 12 \otimes 01 + 2 \otimes 012 \ , \\
	\Delta_1(012) &= 012 \otimes 01 - 02 \otimes 012 + 012 \otimes 12 \ , \\
	T\Delta_1(012) &= 01 \otimes 012 - 012 \otimes 02 + 12 \otimes 012 \ , \\
	\Delta_2(012)=T\Delta_2(012) &= 012 \otimes 012 \ .
\end{align*}
\end{example}


\subsection{Higher Bruhat orders}\label{sect:back:hbo}

There are many ways of defining the higher Bruhat orders.
For the purposes of this paper, it will be useful to consider three of them, namely those using admissible orders, consistent sets, and cubillages.
The cubillage perspective is the principal one we use for our construction.


\subsubsection{Admissible orders}\label{sect:back:hbo:adm}

The original definition of the higher Bruhat orders from \cite{ms89} is as follows.
Let $i$ and $n$ be non-negative integers such that $i + 1 \leqslant n$.
Throughout this article we denote by $\binom{[0, n]}{i + 1}:=\{S \subset [0, n] \st \sz{S} = i + 1\}$ the set of $(i + 1)$-element subsets of $[0, n]$.
Given $K=\{k_{0} < k_{1} < \dots < k_{i + 1}\} \in \binom{[0, n]}{i + 2}$, the set \[\packet{K}:=\{K\setminus k \st k \in K \} \] is called the \defn{packet} of $K$.
Here we have abbreviated $K \setminus \{k\}$ to $K \setminus k$, which we will continue to do.
It is naturally ordered by the \defn{lexicographic order}, where $K \setminus k_{q} < K \setminus k_{p}$ if and only if $p < q$, or its opposite, the \defn{reverse lexicographic order}.

The elements of the \defn{higher Bruhat poset} $\bruhat{[0, n]}{i + 1}$ are admissible orders of $\binom{[0, n]}{i + 1}$, modulo an equivalence relation. 
A total order $\alpha$ of $\binom{[0, n]}{i + 1}$ is \defn{admissible} if for all $K \in \binom{[0, n]}{i + 2}$, the elements $\packet{K}$ appear in either lexicographic or reverse lexicographic order under~$\alpha$.
Two orderings $\alpha$ and $\alpha'$ of~$\binom{[0, n]}{i + 1}$ are \defn{equivalent} if they differ by a sequence of interchanges of pairs of adjacent elements that do not lie in a common packet.
As these interchanges preserve admissibility, the equivalence class $[\alpha]$ of an ordering $\alpha$ is well-defined.
 
The \defn{inversion set} $\inv(\alpha)$ of an admissible order $\alpha$ is the set of all $(i + 2)$-subsets of~$[0, n]$ whose packets appear in reverse lexicographic order in $\alpha$.
Note that inversion sets are well-defined on equivalence classes of admissible orders.
The poset structure on $\bruhat{[0, n]}{i + 1}$ is generated by the covering relations given by $[\alpha] \lessdot [\alpha']$ if $\inv(\alpha') = \inv(\alpha) \cup \{K\}$ for $K \in \binom{[0, n]}{i + 2} \setminus \inv(\alpha)$.
It is possible to have $\inv(\alpha) \subseteq \inv(\alpha')$ without having $[\alpha] \leqslant [\alpha']$ \cite[Thm.~4.5]{z93}.
For $n < i + 1$, we set $\bruhat{[0, n]}{i + 1} = \{\emptyset\}$.


\subsubsection{Consistent sets}\label{sect:back:hbo:cons}

An element $[\alpha]$ of the higher Bruhat poset $\bruhat{[0, n]}{i + 1}$ is uniquely determined by its inversion set $\inv(\alpha)$.
Inversion sets were characterised intrinsically in \cite{z93} as follows.
A subset $U \subseteq \binom{[0, n]}{i + 2}$ is \defn{consistent} if for any $M \in \binom{[0, n]}{i + 3}$, the intersection $\packet{M} \cap U$ is either a beginning segment of $\packet{M}$ in the lexicographic order or an ending segment.
We then have that a set $U \subseteq \binom{[0, n]}{i + 2}$ is equal to $\inv(\alpha)$ for some $[\alpha] \in \bruhat{[0, n]}{i + 1}$ if and only if $U$ is consistent \cite[Thm.~4.1]{z93}.
In this paper it will be convenient to work in terms of consistent sets, so we will abuse notation slightly by writing $U \in \bruhat{[0, n]}{i + 1}$ if $U$ is a consistent set.
The higher Bruhat order can then be defined on consistent sets by the covering relations $U \lessdot U'$ if and only if $U' = U \cup \{K\}$ for $K \in \binom{[0, n]}{i + 2} \setminus U$.

A maximal chain in $\bruhat{[0, n]}{i + 1}$ gives an order on $\binom{[0, n]}{i + 2}$ according to the order these subsets are added to the inversion set.
The consistency condition then ensures that this order is in fact admissible, and one can talk about equivalence of maximal chains in the same way as equivalence of admissible orders.
We then have the following theorem, which could justly be called the fundamental theorem of the higher Bruhat orders.

\begin{theorem}[{\cite[Thm.~2.3]{ms89}}]\label{thm:fund_hbo}
There is a bijection between equivalence classes of maximal chains in $\bruhat{[0, n]}{i + 1}$ and elements of $\bruhat{[0, n]}{i + 2}$.
\end{theorem}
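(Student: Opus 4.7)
The plan is to construct mutually inverse maps between equivalence classes of maximal chains in $\bruhat{[0,n]}{i+1}$ and admissible orders on $\binom{[0,n]}{i+2}$ modulo equivalence, the latter being by definition the elements of $\bruhat{[0,n]}{i+2}$. Given a maximal chain
\[ \emptyset = U_0 \lessdot U_1 \lessdot \cdots \lessdot U_N = \binom{[0,n]}{i+2} \]
in $\bruhat{[0,n]}{i+1}$, with $U_{j} = U_{j-1} \cup \{K_j\}$, I would associate the total order $\alpha$ on $\binom{[0,n]}{i+2}$ in which $K_j$ is the $j$-th element. The main task is then to show that $\alpha$ is admissible.

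The key step is this admissibility check. Fix $M \in \binom{[0,n]}{i+3}$ and consider how the elements of $\packet{M}$ appear in $\alpha$. Since each $U_j$ is consistent, the intersection $U_j \cap \packet{M}$ is a beginning or ending segment of $\packet{M}$ in the lexicographic order at every stage. When the first element of $\packet{M}$ is added to form some $U_{j}$, the resulting singleton intersection must be either $\{P_{\min}\}$ or $\{P_{\max}\}$, where $P_{\min}$ and $P_{\max}$ are the lex-first and lex-last elements of $\packet{M}$. A short induction on the size of $\packet{M} \cap U_j$ then shows that, once the first added element is $P_{\min}$ (resp.\ $P_{\max}$), the intersection must remain a beginning (resp.\ ending) segment at every later step: extending a size-$k$ beginning segment to a size-$(k+1)$ beginning or ending segment forces the newly added element to be the lex-next one, provided $|\packet{M}| = i+3 \geq 3$. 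Hence the elements of $\packet{M}$ appear in $\alpha$ in either lex or reverse lex order, so $\alpha$ is admissible.

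I would then check that the two equivalence relations match. A legal interchange of two consecutive covering relations $U_{j-1} \lessdot U_j \lessdot U_{j+1}$ via a different intermediate $U_j'$ corresponds precisely to swapping $K_j$ and $K_{j+1}$ in $\alpha$; moreover, consistency of such an alternative $U_j'$ holds if and only if $K_j$ and $K_{j+1}$ share no common packet, which is the defining condition of equivalence for admissible orders. For the inverse map, given an admissible order $\alpha$ with elements listed as $K_1, \ldots, K_N$, I set $U_j := \{K_1, \ldots, K_j\}$ and use admissibility to observe that each $\packet{M} \cap U_j$ is a beginning or ending segment in lex order, so every $U_j$ is consistent; this produces a maximal chain in $\bruhat{[0,n]}{i+1}$. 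The two constructions are visibly mutually inverse at the level of equivalence classes.

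The main obstacle is the rigidity step inside the admissibility check: one must argue carefully that the choice of ``mode'' (beginning versus ending) made at the first addition is propagated at every subsequent addition, including the edge cases where $\packet{M} \cap U_j$ is empty or equal to all of $\packet{M}$, which qualify trivially as both kinds of segment. The degenerate dimensions $n < i+2$ can be dispatched separately since both sides of the bijection reduce to singletons.
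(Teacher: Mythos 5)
The paper does not actually prove this result: it quotes it from Manin and Schechtman, and the surrounding text offers only the one-sentence observation that consistency of the $U_j$ forces the induced order on $\binom{[0,n]}{i+2}$ to be admissible. Your argument is a correct and complete proof along exactly those lines: the rigidity step is handled properly (for $|\packet{M}| = i+3 \geqslant 3$, a size-$(k+1)$ ending segment of $\packet{M}$ cannot contain a nonempty proper size-$k$ beginning segment, so the mode fixed by the first element of $\packet{M}$ to enter the inversion set propagates at every later stage), the legality of the flip $U_j \mapsto U_j'$ is correctly shown to be equivalent to $K_j$ and $K_{j+1}$ lying in no common packet, and the inverse map from admissible orders back to chains of consistent sets is constructed and checked cleanly.
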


We will finally need the following construction from \cite[Def.~7.4]{r97}, which we call \defn{contraction}, but which was called ``deletion" in \cite{r97}.
For $S \subseteq [0, n]$ and $U \in \bruhat{[0, n]}{i + 1}$, we denote by $U/S := U \cap \binom{[0, n] \setminus S}{i + 2}$.
We have that $U/S \in \bruhat{[0, n]\setminus S}{i + 1}$, where this poset is defined using the natural identification of $[0, n] \setminus S$ with $[0, n - \sz{S}]$.
When $S = \{p\}$, we write $U/p$ for $U/S$.


\subsubsection{Cubillages}
\label{sect:back:hbo:cub}

We now give the geometric description of the higher Bruhat orders due to \cite{kv91,t02}. 
Consider the \defn{Veronese curve} $\xi\colon \mathbb{R} \rightarrow \mathbb{R}^{i + 1}$, given by $\xi_{t}=(1, t, t^2, \dots, t^{i})$. Let $\left\lbrace t_{0}, \dots, t_{n}\right\rbrace \subset \mathbb{R}$ with $t_{0} < \dots < t_{n}$ and $n \geqslant i$. 
The \defn{cyclic zonotope} $Z([0, n], i + 1)$ is defined to be the Minkowski sum of the line segments \[\overline{\mathbf{0}\xi_{t_{0}}} + \dots + \overline{\mathbf{0}\xi_{t_{n}}} \ ,\] where $\mathbf{0}$ is the origin and $\overline{\mathbf{0}\xi_{t_{p}}}$ is the line segment from $\mathbf{0}$ to $\xi_{t_{p}}$. 
The properties of the zonotope do not depend on the exact choice of $\{t_{0}, \dots, t_{n}\} \subset \mathbb{R}$. 
Hence, for ease we set $t_{p} = p$ for all $p \in [0, n]$.

There is a natural projection $\pi_{i + 1} \colon Z([0, n], n + 1) \to Z([0, n], i + 1)$ given by forgetting the last $n - i$ coordinates.
A \defn{cubillage} $\mathcal{Q}$ of $Z([0, n], i + 1)$ is a section $\mathcal{Q} \colon Z([0, n], i + 1) \to Z([0, n], n + 1)$ of the projection $\pi_{i + 1} \colon Z([0, n], n + 1) \to Z([0, n], i + 1)$ whose image is a union of $(i + 1)$-dimensional faces of $Z([0, n], n + 1)$.
We call these $(i + 1)$-dimensional faces the \defn{cubes} of the cubillage.
A cubillage $\mathcal{Q}$ of $Z([0, n], i + 1)$ gives a subdivision of $Z([0, n], i + 1)$ consisting of the images of the projections of its cubes under~$\pi_{i + 1}$.
We usually think of the cubillage as the subdivision, but it is necessary to define the cubillage as the section to make the $i = 0$ case work, since for $i = 0$ all the subdivisions are the same; see \cite[Rem.~2.6]{gp21} for more details.
In the literature, cubillages are often called \defn{fine zonotopal tilings}, for example, in \cite{gp21}.

Recall that a \defn{facet} of a polytope is a face of codimension one.
The standard basis of $\mathbb{R}^{i + 1}$ induces orientations of the faces of $Z([0, n], i + 1)$, in the sense that the facets of a face can be partitioned into two sets, called upper facets and lower facets.
If $F$ is a $j$-dimensional face of $Z([0, n], i + 1)$, with $G$ a facet of $F$, then $G$ is a \defn{lower} (resp.\ \defn{upper}) facet of $F$ if any normal vector to $G$ which lies inside the affine span of $F$ and points into $F$ has a positive (resp.\ negative) $j$-th coordinate.

As was proven in \cite[Thm.~2.1, Prop.~2.1]{t02}, following \cite[Thm.~4.4]{kv91}, the elements of $\mathcal{B}([0, n], i + 1)$ are in bijection with cubillages of $Z([0, n], i + 1)$. 
The covering relations of $\mathcal{B}([0, n], i + 1)$ are given by pairs of cubillages $\mathcal{Q} \lessdot \mathcal{Q}'$ that differ by an \defn{increasing flip}, that is, when there is a $(i + 2)$-face $F$ of $Z([0, n], n + 1)$ such that $\mathcal{Q}(Z([0, n], i + 1)) \setminus F = \mathcal{Q}'(Z([0, n], i + 1)) \setminus F$ and $\mathcal{Q}(Z([0, n], i + 1))$ contains the lower facets of $F$, whereas $\mathcal{Q}'(Z([0, n], i + 1))$ contains the upper facets of~$F$. 

The cyclic zonotope $Z([0, n], i + 1)$ possesses two canonical cubillages.
One is given by the unique section $\mathcal{Q}_{l} \colon Z([0, n], i + 1) \to Z([0, n], n + 1)$ of $\pi_{i + 1}$ whose image projects to the union of the lower facets of $Z([0, n], i + 2)$ under~$\pi_{i + 2}$.
We call this the \defn{lower cubillage}.
The other is the section $\mathcal{Q}_{u}$ whose image projects to the upper facets of $Z([0, n], i + 2)$ under~$\pi_{i + 2}$, which we call the \defn{upper cubillage}. 
The lower cubillage of $Z([0, n], i + 1)$ gives the unique minimum of the poset $\mathcal{B}([0, n], i + 1)$, and the upper cubillage gives the unique maximum.
The lower and upper cubillages are also known as the \defn{standard} and \defn{antistandard} cubillages respectively, for instance, in \cite{dkk19}.

We now give a description of the cubes of a cubillage that will be useful later. 
Every $(i + 1)$-dimensional face of $Z([0, n], n + 1)$ is given by a Minkowski sum \[\xi_{A} + \sum_{l \in L} \overline{\mathbf{0}\xi_{l}}\] for some subset $L \in \binom{[0, n]}{i + 1}$ and $A \subseteq [0, n] \setminus L$, where $\xi_{A} = \sum_{a \in A} \xi_{a}$. 
We call $L$ the set of \defn{generating vectors} and $A$ the \defn{initial vertex}, see \cref{fig:iv_gv}.
Here we have omitted the $\xi$ from the labels of the vertices of the cubes, so that a label $A$ means~$\xi_{A}$; henceforth, we will always do this.

\begin{figure}[h!]
\centerline{
\resizebox{\linewidth}{!}{
\begin{tikzpicture}

\begin{scope}[shift={(-3,0.5)},scale=2]

\coordinate (1) at (0,0);
\coordinate (12) at (0,1);

\draw (1) -- (12);

\node at (1) [below = 1mm of 1] {\tiny $1$};
\node at (12) [above = 1mm of 12] {\tiny $12$};

\end{scope}


\begin{scope}[shift={(0,0.25)},scale=1.25]

\coordinate (3) at (0,0);
\coordinate (23) at (-1,1);
\coordinate (34) at (1,1);
\coordinate (234) at (0,2);

\draw (3) -- (23) -- (234) -- (34) -- (3);

\node at (3) [below = 1mm of 3] {\tiny $3$};
\node at (23) [left = 1mm of 23] {\tiny $23$};
\node at (34) [right = 1mm of 34] {\tiny $34$};
\node at (234) [above = 1mm of 234] {\tiny $234$};

\end{scope}


\begin{scope}[shift={(4,0)}]

\coordinate (12) at (0,0);
\coordinate (012) at (-1,1);
\coordinate (124) at (0,1);
\coordinate (125) at (1,1);
\coordinate (0124) at (-1,2);
\coordinate (0125) at (0,2);
\coordinate (1245) at (1,2);
\coordinate (01245) at (0,3);

\draw (12) -- (012) -- (0124) -- (01245) -- (1245) -- (125) -- (12);
\draw (12) -- (124);
\draw (124) -- (0124);
\draw (124) -- (1245);
\draw[dotted] (01245) -- (0125);
\draw[dotted] (0125) -- (125);
\draw[dotted] (0125) -- (012);

\node at (12) [below = 1mm of 12] {\tiny $12$};
\node at (012) [left = 1mm of 012] {\tiny $012$};
\node at (0124) [left = 1mm of 0124] {\tiny $0124$};
\node at (01245) [above = 1mm of 01245] {\tiny $01245$};
\node at (1245) [right = 1mm of 1245] {\tiny $1245$};
\node at (125) [right = 1mm of 125] {\tiny $125$};
\node at (124) [left = 1mm of 124] {\tiny $124$};
\node at (0125) [right = 1mm of 0125] {\tiny $0125$};

\end{scope}

\node [anchor=east] at (-3.75,-1) {\small Initial vertex};
\node [anchor=east] at (-3.75,-1.5) {\small Generating vectors};

\node at (-2.95,-1) {\small $\{1\}$};
\node at (-2.95,-1.5) {\small $\{2\}$};

\node at (0.05,-1) {\small $\{3\}$};
\node at (0.05,-1.5) {\small $\{2,4\}$};

\node at (4.1,-1) {\small $\{1,2\}$};
\node at (4.1,-1.5) {\small $\{0,4,5\}$};

\end{tikzpicture}}
}
\caption{Cubes associated to some vertices and sets of generating vectors.}
\label{fig:iv_gv}
\end{figure}
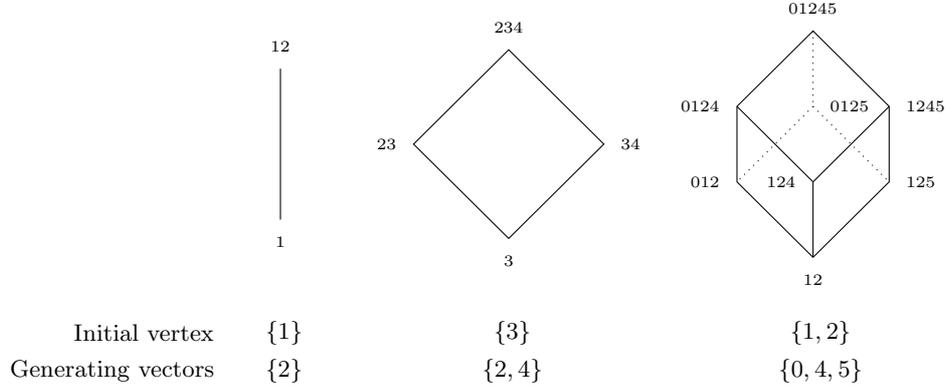

One can describe the upper and lower facets of a cube in terms of initial vertices and generating vectors using the following combinatorial notion.
Given $L \in \binom{[0, n]}{i + 1}$ and $a \in [0, n] \setminus L$, we will use the notation
\[\sz{L}_{>a} := \sz{\{l \in L \st l > a\}} \ .\]
We then say that $a$ is an \defn{even gap} if $\sz{L}_{>a}$ is an even number. 
Otherwise, we say that $a$ is an \defn{odd gap}.

\begin{proposition}[{\cite[Lem.~2.1]{t02} or \cite[Prop.~8.1]{dkk18}}]\label{prop:up_low_facets}
Let $F$ be a face of $Z([0, n], n + 1)$ with initial vertex $A$ and generating vectors $L$. Then we have the following.
\begin{enumerate}
\item Facets of $F$ with generating vectors $L \setminus l$ and initial vertex $A$ are upper facets if $l$ is an odd gap in $L \setminus l$ and lower facets if $l$ is an even gap in $L \setminus l$.
\item Facets of $F$ with generating vectors $L \setminus l$ and initial vertex $A \cup \{l\}$ are upper facets if $l$ is an even gap in $L \setminus l$ and lower facets if $l$ is an odd gap in $L \setminus l$.
\end{enumerate}
\end{proposition}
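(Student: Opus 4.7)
The plan is to reduce the statement to a Vandermonde determinant computation by projecting onto the coordinates that detect the orientation. Writing $j := \sz{L}$ and $L = \{l_1 < \cdots < l_j\}$, the face $F = \xi_A + \sum_{k=1}^j \overline{\mathbf{0}\xi_{l_k}}$ is a $j$-dimensional parallelepiped; after translation one may assume $\xi_A = \mathbf{0}$. First I would project $\mathbb{R}^{n+1}$ onto its first $j$ coordinates via $\pi\colon \mathbb{R}^{n+1}\to \mathbb{R}^j$, under which the generators become $\pi(\xi_{l_k}) = (1, l_k, l_k^2, \ldots, l_k^{j-1})$. These are the columns of an invertible $j \times j$ Vandermonde matrix $M$ with $\det M = \prod_{r<s}(l_s - l_r) > 0$, so $\pi$ restricts to a linear isomorphism on the affine span of $F$.

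Next I would identify the inward-pointing normal $N_k$ to the facet of $\pi(F)$ with generating vectors $L \setminus l_k$ and initial vertex $\mathbf{0}$. Taking $N_k$ to be $\det M$ times the $k$-th row of $M^{-1}$ gives the identity $\langle N_k, \pi(\xi_{l_{k'}})\rangle = \delta_{kk'} \det M$, which shows both that $N_k$ is orthogonal to the other generators and, since its pairing with the missing generator $\pi(\xi_{l_k})$ equals the positive number $\det M$, that it points into $\pi(F)$. By cofactor expansion, the last entry of $N_k$ equals $(-1)^{j+k}\det\widetilde{M}_{j,k}$, where $\widetilde{M}_{j,k}$ is the smaller Vandermonde minor obtained by deleting row $j$ and column $k$; this minor is positive, so the sign of the orientation-detecting coordinate of $N_k$ is $(-1)^{j-k}$.

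I would conclude by translating this parity back to the combinatorial condition. Since $\sz{L \setminus l_k}_{>l_k} = j - k$, the element $l_k$ is an even gap in $L \setminus l_k$ exactly when $j - k$ is even, i.e., when the distinguished coordinate of the inward normal is positive, which is the condition for the facet to be lower; this yields item~(1). For item~(2), the facet with initial vertex $\xi_{l_k}$ is the opposite facet of $F$ in the direction of $\xi_{l_k}$, so its inward-pointing normal is $-N_k$; this flips the sign of the relevant coordinate and interchanges the roles of even and odd gaps.

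The main delicate step is checking that $N_k$ is correctly identified as the inward (rather than outward) normal and that the projection $\pi$ does not corrupt the sign that controls the orientation; both are handled by the positivity of $\det M$ together with the fact that $\pi$ acts as the identity on the distinguished coordinate. Once these are in place, the proposition reduces to matching the Vandermonde cofactor sign $(-1)^{j+k}$ against the parity of $j - k$.
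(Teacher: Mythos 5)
The paper does not prove this statement; it is quoted from \cite{t02} and \cite{dkk18}, so your direct Vandermonde argument supplies a proof the paper lacks, and the core computation is correct: the identity $\langle N_k, \pi(\xi_{l_{k'}})\rangle = \delta_{kk'}\det M$ does identify $N_k$ as the inward normal to the facet of $\pi(F)$ at the initial vertex (for a parallelepiped, a vector orthogonal to all but one generator has $v_k$-component of the same sign as its pairing with $v_k$), the cofactor expansion gives the sign $(-1)^{j-k}$ on the $j$-th coordinate, and $\sz{L\setminus l_k}_{>l_k} = j-k$ converts this to the even/odd-gap condition, with item~(2) following by negation.

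The one step that is not justified as stated is the claim that ``the projection $\pi$ does not corrupt the sign.'' Neither the positivity of $\det M$ nor the fact that $\pi$ fixes the $j$-th coordinate shows that the inward Euclidean normal to $\pi(G)$ in $\mathbb{R}^j$ and the inward Euclidean normal to $G$ inside the affine span of $F$ in $\mathbb{R}^{n+1}$ have $j$-th coordinates of the same sign: $\pi$ is a linear isomorphism on that span but not an isometry, so it does not carry one normal to the other, and their signs can genuinely differ. For instance, with $L = \{1,2\}$ in $\mathbb{R}^3$, the facet at the origin spanned by $\xi_2$ has inward normal proportional to $(2,1,-1)$ inside the span of $\xi_1,\xi_2$ (second coordinate $+1$), while $N_1 = (2,-1)$ has second coordinate $-1$ --- and it is $N_1$ that matches the proposition, which says this facet is upper. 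What actually makes the reduction to $\pi(F)$ legitimate is that lower/upper is a visibility notion: $G$ is lower precisely when the unique (up to positive scaling) direction in the span of $F$ whose image under $\pi$ has vanishing first $j-1$ coordinates and positive $j$-th coordinate points from $G$ into $F$. That criterion transports along $\pi$ because $\pi$ restricted to the span of $F$ is an affine bijection fixing the first $j$ coordinates, and for the full-dimensional parallelepiped $\pi(F)$ it coincides with the Euclidean-normal criterion you compute. I would make this transport explicit; $\det M>0$ controls orientation, which is not what the upper/lower dichotomy needs.
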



\subsubsection{From consistent sets to cubillages}
\label{sect:cons->cub}

One can explicitly describe the bijection between cubillages of $Z([0, n], i + 1)$ and consistent subsets of $\binom{[0, n]}{i + 2}$.
Given a cubillage $\mathcal{Q}$ of $Z([0, n], i + 1)$, it follows from \cite[Thm.~2.1]{t02} that the cubes of $\mathcal{Q}$ are in bijection with the elements of~$\binom{[0, n]}{i + 1}$ via sending a cube to its set of generating vectors.
Hence, given a consistent subset $U$ of $\binom{[0, n]}{i + 2}$, the corresponding cubillage $\mathcal{Q}_{U}$ of $Z([0, n], i + 1)$ is determined once, for every element of $\binom{[0, n]}{i + 1}$, one knows the initial vertex of the cube with that set of generating vectors.
Hence, we write $\init{U}{L}$ for the initial vertex of the cube with generating vectors $L$ in $\mathcal{Q}_{U}$.

\begin{proposition}[{\cite[Thm.~2.1]{t02}}]
	\label{prop:init_vert}
Given a set of generating vectors~$L \in \binom{[0, n]}{i + 1}$ and $a \in [0, n]\setminus L$, we have that $a \in \init{U}{L}$ if and only if either
\begin{itemize}
\item $L \cup \{a\} \in U$ and $a$ is an even gap in $L$, or
\item $L \cup \{a\} \notin U$ and $a$ is an odd gap in $L$.
\end{itemize}
\end{proposition}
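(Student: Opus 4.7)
The plan is to analyse the cube with generating vectors $L$ in the cubillage $\mathcal{Q}_U$ by relating it to the ambient $(i + 2)$-face $F$ of $Z([0, n], n + 1)$ whose generating vectors are $K := L \cup \{a\}$. Under the projection $\pi_{i+1}$, the face $F$ projects onto an $(i+2)$-dimensional subregion of $Z([0, n], i+1)$ that is necessarily covered by the $i+2$ cubes of $\mathcal{Q}_U$ indexed by the elements of $\packet{K}$, since the cubillage is a section of $\pi_{i+1}$ and these are precisely the cubes whose generating-vector sets occur in the packet. The cube with generating vectors $L$ therefore lifts to a facet of $F$.

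The crucial step is to identify whether the $\packet{K}$-cubes collectively form the lower facets or the upper facets of their common lift $F$. I claim that they form the upper facets if and only if $K \in U$. This follows from the description of covering relations in $\bruhat{[0, n]}{i + 1}$: the covering relation $U \lessdot U \cup \{K\}$ corresponds to an increasing flip across a face with generating vectors $K$, which exchanges its lower facets for its upper facets inside the cubillage. Beginning at the minimum $U = \emptyset$, where $\mathcal{Q}_\emptyset = \mathcal{Q}_l$ realises every $\packet{K}$ as the lower facets of the corresponding lift, an induction along any saturated chain up to $U$ yields the dichotomy. The subtlety is that a flip at $K' \neq K$ may modify cubes whose generating-vector sets also lie in $\packet{K}$; however, consistency of $U$ together with a careful bookkeeping (or, equivalently, the direct description of the bijection given in \cite[Thm.~2.1]{t02}) confirms that the dichotomy is preserved throughout, and this is the main technical obstacle in the argument.

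Once the dichotomy is established, the conclusion follows by applying \cref{prop:up_low_facets} to $F$ and its facet with generating vectors $L$. When $K \in U$, the cube with generating vectors $L$ is an upper facet of $F$: by part (1) its initial vertex coincides with that of $F$ precisely when $a$ is an odd gap in $L$, while by part (2) it equals the initial vertex of $F$ unioned with $\{a\}$ precisely when $a$ is an even gap; consequently $a \in \init{U}{L}$ exactly when $a$ is an even gap in $L$, matching the first bullet of the statement. When $K \notin U$, the cube with generating vectors $L$ is a lower facet of $F$, and the symmetric reading of \cref{prop:up_low_facets} yields $a \in \init{U}{L}$ exactly when $a$ is an odd gap, matching the second bullet.
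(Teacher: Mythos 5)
The paper does not prove this proposition---it is cited directly from \cite[Thm.~2.1]{t02} and used as a black box---so there is no internal proof to compare against, and your attempt is being judged on its own terms.

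Your geometric strategy is sound, but the step you yourself flag as ``the main technical obstacle'' is a genuine gap, not a detail: you never verify that the dichotomy survives flips at $K' \neq K$. Worse, you propose to resolve it by invoking ``the direct description of the bijection given in \cite[Thm.~2.1]{t02}''---but that is precisely the statement you are trying to prove, so this is circular and cannot be used.

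The gap can be closed directly, and doing so is simpler than what you are attempting, because you do not need the global ``all lower facets or all upper facets'' dichotomy for the whole packet. Fix $L$ and $a$ and track only whether $a$ lies in the initial vertex of the cube with generating vectors $L$ along a saturated chain $\emptyset = U_0 \lessdot U_1 \lessdot \dotsb \lessdot U_m = U$. A flip at $K' := U_{j+1} \setminus U_j$ replaces the cube with generating vectors $L$ if and only if $L \subset K'$, and \cref{prop:up_low_facets} shows that when it does, the initial vertex changes by toggling membership of the \emph{single} element $K' \setminus L$. Hence membership of $a$ in $\init{U_j}{L}$ is toggled exactly at those steps where $K' \setminus L = \{a\}$, i.e.\ where $K' = L \cup \{a\} = K$; since $K$ is added to the inversion set at most once, $a \in \init{U}{L}$ if and only if $a \in \init{\emptyset}{L}$ XOR $K \in U$. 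It then remains only to identify $\init{\emptyset}{L}$ directly from the definition of the lower cubillage (note you cannot invoke \cref{lem:lower_cub_spec} here, as the paper derives that lemma from the very proposition under discussion), and verify that $a \in \init{\emptyset}{L}$ exactly when $a$ is an odd gap in $L$. This local bookkeeping replaces both the packet-wide dichotomy and the choice of an ambient face $F$, neither of which is then needed.
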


An analogous statement was shown for more general zonotopes in \cite[Lem.~5.13]{gpw22}.
It will also be useful to introduce the notation \[\initb{U}{L} := [0, n] \setminus (L \cup \init{U}{L})\] for the vectors which are neither generating vectors nor present in the initial vertex.


\section{Coproducts from cubillages}\label{sect:main}

In this section, we show how one can construct a coproduct $\Delta_{i}^{U} \colon \chains_{\bullet}(\simp^n) \to \chains_{\bullet}(\simp^n) \otimes \chains_{\bullet}(\simp^n)$ from an element $U \in \bruhat{[0, n]}{i + 1}$, or equivalently, from any cubillage $\mathcal{Q}_{U}$ of $Z([0, n], i + 1)$.
We show that all these coproducts give homotopies between $\Delta_{i - 1}$ and $T\Delta_{i - 1}$. 
The cubillage perspective allows us to give a clean and illuminating proof of this important fact (\cref{thm:homotopy_formula}).
It also allows us to show that all coproducts which satisfy the homotopy formula arise from cubillages, provided they contain no redundant terms (\cref{thm:all_coproducts}).

Basis elements of $\chains_{\bullet}(\simp^n) \otimes \chains_{\bullet}(\simp^n)$ are of the form $X \otimes Y$ for $X, Y \subseteq [0,n]$.
A central observation is as follows.
Given a basis element $X \otimes Y \in \chains_{\bullet}(\simp^n) \otimes \chains_{\bullet}(\simp^n)$, we can always write $X = L \cup A$ and $Y = L \cup B$, where $L = X \cap Y$, $A = X \setminus Y$, and $B = Y \setminus X$.
Given a subset $S \subseteq [0, n]$, we say that $L \cup A \otimes L \cup B$ is \defn{supported on~$S$} if~$L \cup A \cup B = S$.
The basic relationship between cubillages and elements of $\chains_{\bullet}(\simp^n) \otimes \chains_{\bullet}(\simp^n)$ is as follows.

\begin{proposition}\label{prop:face_term_bij}
There is a bijection between faces of $Z(S, |S|)$ excluding $\emptyset$ and~$S$ and basis elements of $\chains_{\bullet}(\simp^n) \otimes \chains_{\bullet}(\simp^n)$ which are supported on $S$.
\end{proposition}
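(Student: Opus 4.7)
The plan is to match faces of the parallelepiped $Z(S, |S|)$ to ordered triples of disjoint subsets of $S$, and then match such triples to basis elements supported on $S$, verifying that the two excluded faces correspond exactly to the two ordered triples that do not give rise to valid basis elements.

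First, I recall from \cref{sect:back:hbo:cub} that every face of $Z(S, |S|)$ has the form $\xi_A + \sum_{l \in L}\overline{\mathbf{0}\xi_l}$ for a pair of disjoint subsets $A, L \subseteq S$, and this parametrisation is a bijection between faces and such pairs. Setting $B := S \setminus (A \cup L)$, this is the same data as an ordered triple $(A, L, B)$ of pairwise disjoint subsets of $S$ with $A \cup L \cup B = S$, giving a total of $3^{|S|}$ faces. Among these, the vertex labelled $\emptyset$ corresponds to $(A, L, B) = (\emptyset, \emptyset, S)$ and the vertex labelled $S$ corresponds to $(A, L, B) = (S, \emptyset, \emptyset)$.

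Second, I translate basis elements supported on $S$ into the same language. As noted in the paragraph preceding the proposition, any basis element $X \otimes Y$ is uniquely written as $(L \cup A) \otimes (L \cup B)$ with $L = X \cap Y$, $A = X \setminus Y$, and $B = Y \setminus X$; these three sets are pairwise disjoint. The support condition $L \cup A \cup B = S$ then says that $(A, L, B)$ is an ordered triple of pairwise disjoint subsets partitioning $S$. Since $\chains_{\bullet}(\simp^n)$ has a basis indexed by non-empty subsets of $[0,n]$, the faces $X = A \cup L$ and $Y = L \cup B$ must both be non-empty, which rules out exactly the two triples $(\emptyset, \emptyset, S)$ and $(S, \emptyset, \emptyset)$.

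Finally, I define the bijection explicitly: the face of $Z(S, |S|)$ with initial vertex $A$ and generating vectors $L$ is sent to the basis element $(A \cup L) \otimes (L \cup B)$, where $B = S \setminus (A \cup L)$; the inverse reads off $A$, $L$, $B$ from the basis element as above. Combining the two descriptions shows that this map is a bijection from all $3^{|S|}$ faces to all ordered triples, and restricting to the complement of $\{\emptyset, S\}$ on the face side matches exactly the $3^{|S|} - 2$ valid basis elements supported on $S$. There is no real obstacle in this argument; the only subtlety is a bookkeeping one, namely making sure that the two excluded faces on the geometric side coincide with the two triples ruled out by the non-emptiness of $X$ and $Y$, which the computation above confirms.
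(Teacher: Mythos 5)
Your proof is correct and takes essentially the same route as the paper: identify faces of $Z(S,|S|)$ with pairs $(A,L)$ of disjoint subsets (hence triples $(A,L,B)$), identify basis elements supported on $S$ with such triples via $L = X\cap Y$, $A = X\setminus Y$, $B = Y\setminus X$, and observe that the two triples $(\emptyset,\emptyset,S)$ and $(S,\emptyset,\emptyset)$ are exactly the two excluded vertices and exactly the two triples that fail to give non-empty $X$ and $Y$. The only difference is that you spell out the $3^{|S|}$ count and track the excluded cases a bit more explicitly, which the paper compresses to ``Bijectivity is evident.''
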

\begin{proof}
As in Section~\ref{sect:back:hbo:cub}, we have that every face of $Z(S, \sz{S})$ is determined by its set of generating vectors $L$ and initial vertex $A$.
Defining $B := S \setminus (L\cup A)$, the corresponding basis element of $\chains_{\bullet}(\simp^n) \otimes \chains_{\bullet}(\simp^n)$ is $L \cup A \otimes L \cup B$.
This gives a well-defined element unless $L = \emptyset$ and either $A = \emptyset$ or $B = \emptyset$, which is the case if and only if the face is either of the vertices $\emptyset$ or $S$.
Bijectivity is evident.
\end{proof}

Hence, we may identify basis elements of $\chains_{\bullet}(\simp^n) \otimes \chains_{\bullet}(\simp^n)$ with the corresponding faces of $Z(S, \sz{S})$, in particular in the case $S = [0, n]$.

\begin{construction}
\label{const}
For any $U \in \bruhat{[0, n]}{i + 1}$, where $n \geqslant i$, we define the cup-$i$ coproduct 
\[\Delta_{i}^{U} \colon \chains_{\bullet}(\simp^n) \to \chains_{\bullet}(\simp^n) \otimes \chains_{\bullet}(\simp^n) \]
on the top face of $\simp^n$ by the formula 
\[\Delta_{i}^{U}([0, n]) := \sum_{L \in \binom{[0, n]}{i + 1}} (-1)^{\trmsgn(L \cup \init{U}{L} \otimes L \cup \initb{U}{L})} L \cup A_{L}^U \otimes L \cup B_{L}^U \ ,\]
where
\[\trmsgn(L \cup \init{U}{L} \otimes L \cup \initb{U}{L}) := \sum_{b \in \initb{U}{L}} \sz{\init{U}{L}}_{<b} + \sum_{l \in L} \sz{L}_{<l} + (n + 1)\sz{ \init{U}{L}} \in \mathbb{Z}/2\mathbb{Z}\ .\]
Here $\init{U}{L}$ and $\initb{U}{L}$ are the sets from Section~\ref{sect:cons->cub}.
We define this as a class in $\mathbb{Z}/2\mathbb{Z}$, since this is all that the sign depends on, and doing so makes it easier to write down the calculations in \cref{sect:appendix}.

For codimension one faces, we define \[\Delta_{i}^{U}([0, n] \setminus \{p\}) := \Delta_{i}^{U/p}([0, n] \setminus \{p\}) \ .\]
In this way, we inductively extend the definition to lower-dimensional faces too.
Once we reach a non-empty subset $S \subseteq [0, n]$ with $\sz{S} \leqslant i$, we define $\Delta_{i}^{U}(S) := 0$.
We have thus defined the map $\Delta_{i}^{U}$ for basis elements of $\chains_{\bullet}(\simp^n)$; one can then extend linearly.
\end{construction}

Hence, the terms of $\Delta_{i}^U([0, n])$ are simply those terms corresponding to the cubes of $\mathcal{Q}_{U}$ under the bijection in \cref{prop:face_term_bij}, with a certain sign attached.
In what follows, we will need to make calculations involving the signs $\trmsgn(L \cup \init{U}{L} \otimes L \cup \initb{U}{L})$.
We carry out these calculations in \cref{sect:appendix}, and refer to the relevant lemmas from there when necessary.


\subsection{Comparison to original Steenrod operations}

Now, we claim that, up to sign, for $i$ even, the original Steenrod cup-$i$ coproduct $\Delta_i$ is exactly the coproduct $\Delta_i^{\emptyset}$ coming from the minimal element $\emptyset$ the higher Bruhat order, where nothing is inverted, and the opposite of the Steenrod cup-$i$ product $T \Delta_i$ exactly comes from the maximal element of the higher Bruhat order $\binom{[0, n]}{i + 2}$, where everything is inverted. 
For $i$ odd, the opposite is true. 
The coproducts coming from other elements of the higher Bruhat orders can be thought of as intermediate coproducts between these two cases.

In order to show this, we first prove a useful proposition describing what happens to $\Delta_{i}^U$ under taking the complement of $U$.

\begin{proposition}
	\label{prop:complement}
If $U \in \bruhat{[0,n]}{i + 1}$, then we have \[\Delta_{i}^{\binom{[0,n]}{i + 2}\setminus U} = (-1)^{i}T\Delta_{i}^U \ .\]
\end{proposition}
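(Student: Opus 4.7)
The plan is to compare the two sides of the equation term-by-term, using the formula of \cref{const} together with the combinatorial description of initial vertices in \cref{prop:init_vert}. Write $U' := \binom{[0,n]}{i+2} \setminus U$. The first observation is that complementing $U$ swaps initial and non-initial vectors for every set of generating vectors $L \in \binom{[0,n]}{i+1}$: by \cref{prop:init_vert}, the membership of $a \in [0,n] \setminus L$ in $\init{U}{L}$ depends on whether $L \cup \{a\} \in U$ and on the parity of $a$ as a gap in $L$, and flipping the condition $L \cup \{a\} \in U \leftrightarrow L \cup \{a\} \notin U$ produces precisely the complementary set. Hence $\init{U'}{L} = \initb{U}{L}$ and $\initb{U'}{L} = \init{U}{L}$, so the $L$-term in $\Delta_i^{U'}([0,n])$ is, up to sign, the swap of the $L$-term in $\Delta_i^{U}([0,n])$.

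The second step is to reduce the claim on $[0,n]$ to the claim that, for each $L$, the sign discrepancy matches the sign introduced by $(-1)^i T$. Setting $a := |\init{U}{L}|$ and $b := |\initb{U}{L}|$, so that $a + b = n - i$, the symmetry isomorphism contributes a sign $(-1)^{(|L|+a-1)(|L|+b-1)} = (-1)^{(i+a)(i+b)}$, while the desired $(-1)^i$ adds one more. The goal therefore becomes the congruence
\[
\trmsgn_{U'}(L) + \trmsgn_{U}(L) \equiv i + (i+a)(i+b) \pmod 2,
\]
where $\trmsgn_U(L)$ denotes the sign exponent from \cref{const}. On the left, the $\sum_{l \in L} |L|_{<l}$ terms cancel, and the remaining double-sum expression evaluates by a standard counting argument: the two sums together count all ordered pairs $(x,y)$ with $x < y$, one element in $\init{U}{L}$ and the other in $\initb{U}{L}$, giving exactly $ab$ pairs. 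The remaining contribution is $(n+1)(a+b) = (n+1)(n-i)$. A short $\bmod\,2$ calculation then shows both sides equal $i + in \pmod 2$, completing the verification on the top face.

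For lower-dimensional faces, the statement follows by induction on codimension once we observe that contraction commutes with complementation: for any $p \in [0,n]$,
\[
\bigl(\textstyle\binom{[0,n]}{i+2} \setminus U\bigr)/p \;=\; \binom{[0,n] \setminus \{p\}}{i+2} \setminus (U/p),
\]
and this complement is taken inside the ambient set appropriate to $[0,n]\setminus\{p\}$. Combined with the definition $\Delta_i^U([0,n]\setminus\{p\}) = \Delta_i^{U/p}([0,n]\setminus\{p\})$, the inductive hypothesis applied to $U/p$ yields the result down to any $S \subseteq [0,n]$ with $|S| > i$; for $|S| \leqslant i$ both sides vanish.

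The main obstacle is the sign bookkeeping in the second step: one has to keep track of three distinct sources of signs (the $\trmsgn$-functions, the symmetry isomorphism on tensors of nontrivial degrees, and the global factor $(-1)^i$) and collapse them modulo $2$ using the constraint $a + b = n - i$. Once the counting identity for the mixed double-sum is identified, the rest is routine modular arithmetic.
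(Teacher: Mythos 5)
Your proof is correct and takes essentially the same route as the paper's: both swap the roles of $\init{U}{L}$ and $\initb{U}{L}$ via \cref{prop:init_vert}, then verify a sign identity; the paper cites \cref{lem:sign_swap} for the latter, whereas you re-derive it inline via the double-counting identity $\sum_{b \in B}\sz{A}_{<b} + \sum_{a \in A}\sz{B}_{<a} = ab$, which is exactly the computation underlying that lemma. One small imprecision: after cancellation both sides of your target congruence reduce to $ab + in + i \bmod 2$, not $i + in$ as stated, but this is a slip of bookkeeping and not a gap; your explicit treatment of lower-dimensional faces via compatibility of contraction with complementation is also a welcome addition that the paper's proof leaves implicit.
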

\begin{proof}
It follows from Proposition~\ref{prop:init_vert} that
\begin{align*}
A_{L}^{\binom{[0, n]}{i + 2} \setminus U} = B_{L}^U \quad \text{ and } \quad B_{L}^{\binom{[0, n]}{i + 2} \setminus U} = A_{L}^U \ .
\end{align*}
Hence, ignoring signs, we have that $\Delta_{i}^{\binom{[0,n]}{i + 1}\setminus U}$ and $T\Delta_{i}^U$ have the same terms.
Comparing signs with Lemma~\ref{lem:sign_swap}, we have that on the left-hand side the term~$L \cup B_{L}^U \otimes L \cup A_{L}^U$ has the sign $\trmsgn(L \cup B_{L}^U \otimes L \cup A_{L}^U) = $ \[
\trmsgn(L \cup A_{L}^U \otimes L \cup B_{L}^U) + (\sz{L \cup A_{L}^U} + 1)(\sz{L \cup B_{L}^U} + 1) + \sz{L} + 1 \ ,
\]
which is precisely the sign on the right-hand side, recalling the definition of $T$ and noting that $\sz{L} = i + 1$.
\end{proof}

It is also useful to consider the following well-known fact about the initial vertices of the cubes of the lower cubillage.

\begin{lemma}\label{lem:lower_cub_spec}
For the lower cubillage of $Z([0, n], i + 1)$, which is given by $U = \emptyset \in \bruhat{[0, n]}{i + 1}$, we have
\begin{align*}
L \cup A_{L}^\emptyset &= \dots \cup [l_{i - 3}, l_{i - 2}] \cup [l_{i - 1}, l_{i}] \ , \\
L \cup B_{L}^\emptyset &= \dots \cup [l_{i - 2}, l_{i - 1}] \cup [l_{i},n] \ ,
\end{align*}
for the cube with generating vectors $L = \{l_{0}, l_{1}, \dots, l_{i}\} \in \binom{[0,n]}{i + 1}$.
\end{lemma}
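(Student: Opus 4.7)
The plan is to invoke Proposition~\ref{prop:init_vert} with $U = \emptyset$. Since no subset ever lies in the empty set, only the second bullet of the proposition can apply, so for each $a \in [0,n] \setminus L$ we have
\[ a \in \init{\emptyset}{L} \iff a \text{ is an odd gap in } L \iff \sz{L}_{>a} \text{ is odd}. \]
Everything else is a combinatorial reading off of this criterion.

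To carry this out, I would write $L = \{l_0 < l_1 < \cdots < l_i\}$ and adopt the boundary conventions $l_{-1} := -1$ and $l_{i+1} := n+1$, so that every $a \in [0, n] \setminus L$ lies in a unique open interval $(l_{j-1}, l_j)$ with $0 \leqslant j \leqslant i+1$. For such an $a$, we have $\sz{L}_{>a} = i + 1 - j$, hence $a$ is an odd gap precisely when $i + 1 - j$ is odd, i.e.\ when $j \in \{i, i - 2, i - 4, \ldots\}$. Thus $\init{\emptyset}{L}$ is the disjoint union of the open intervals $(l_{i-1}, l_i),\; (l_{i-3}, l_{i-2}),\; \ldots$, and taking the union with $L$ closes each of these intervals at both endpoints, producing
\[ L \cup \init{\emptyset}{L} = \cdots \cup [l_{i-3}, l_{i-2}] \cup [l_{i-1}, l_i] \ , \]
which is the claimed formula.

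The argument for $L \cup \initb{\emptyset}{L}$ is entirely parallel: by definition $\initb{\emptyset}{L} = ([0, n] \setminus L) \setminus \init{\emptyset}{L}$ consists precisely of the even gaps, which fill the complementary open intervals $(l_i, l_{i+1}] = (l_i, n],\; (l_{i-2}, l_{i-1}),\; \ldots$, and unioning with $L$ consolidates these into $\cdots \cup [l_{i-2}, l_{i-1}] \cup [l_i, n]$. The only point requiring any care is the parity-dependent behaviour at the left end: the outermost interval $(l_{-1}, l_0) = [0, l_0)$ appears in $\init{\emptyset}{L}$ when $i$ is even and in $\initb{\emptyset}{L}$ when $i$ is odd, which is exactly what the ``$\cdots$'' in the statement is meant to encode. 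This is the main (and essentially only) bookkeeping obstacle, and once absorbed into the notation the lemma is immediate.
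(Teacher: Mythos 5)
Your proof is correct and takes essentially the same approach as the paper: apply Proposition~\ref{prop:init_vert} with $U = \emptyset$ so that only the odd-gap criterion remains, then read off which gaps lie where relative to $L$. You spell out the bookkeeping (the index $j$ and the boundary conventions) more explicitly than the paper's one-line proof, but the argument is identical.
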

\begin{proof}
This follows from \cref{prop:init_vert}, since elements of $(l_{i}, n]$ are even gaps in~$L$, elements of $(l_{i - 1}, l_{i})$ are odd gaps, and so on.
\end{proof}

We can now compare our operations to the original Steenrod coproducts.
The fact that the Steenrod coproducts alternate between corresponding to the minimal and maximal elements of the higher Bruhat orders causes a discrepancy in signs.
Our coproducts always give homotopies from the maximal element to the minimal element, so getting a homotopy in the other direction requires a minus sign.

\begin{theorem}
\label{lem:boundary=steenrod}
For $i$ even, we have \[\Delta_{i}^{\emptyset} = (-1)^{i/2}\Delta_{i} \quad  \text{ and } \quad  \Delta_{i}^{\binom{[0,n]}{i + 2}} = (-1)^{i/2} T \Delta_{i} \ ,\] whilst for $i$ odd we have \[\Delta_{i}^{\emptyset} = (-1)^{\ceil{i/2}}T\Delta_{i} \quad \text{ and } \quad \Delta_{i}^{\binom{[0,n]}{i + 2}} = (-1)^{\floor{i/2}}\Delta_{i}\ .\]
\end{theorem}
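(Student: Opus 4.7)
The plan is to reduce to the top face and then compare the signs with Steenrod's formula directly, using the geometric description of the lower cubillage.

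The statement on lower-dimensional faces reduces to the top-face statement on smaller simplices, since contraction preserves extremal elements (namely $\emptyset/p = \emptyset$ and $\binom{[0,n]}{i+2}/p = \binom{[0,n]\setminus\{p\}}{i+2}$), and both $\Delta_i^U$ and the Steenrod coproduct are defined on subfaces by applying the same respective formula to the sub-simplex. Hence it suffices to prove the identity on the top face $[0,n]$.

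For $U = \emptyset$, I would invoke \cref{lem:lower_cub_spec} to match terms. For $L = \{l_0 < \cdots < l_i\} \in \binom{[0,n]}{i+1}$, the pair $(L \cup A_L^\emptyset, L \cup B_L^\emptyset)$ decomposes $[0,n]$ into two alternating unions of closed intervals with common breakpoints $L \cup \{0, n\}$, which is precisely the data of a Steenrod overlapping partition $\mathcal{L}$. The terms of $\Delta_i^\emptyset([0,n])$ are therefore in bijection with those of $\Delta_i([0,n])$. For $i$ even, $L \cup A_L^\emptyset$ matches Steenrod's first factor $L_0 \cup L_2 \cup \cdots$, so the terms coincide; for $i$ odd the roles of the two factors are swapped, which is what accounts for the $T$ appearing in the odd case.

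The sign comparison is the main calculation. I would interpret Steenrod's shuffle $w_{\mathcal{L}}$ as the permutation sorting $(0, 1, \ldots, n)$ into the order ``first factor, then the interior elements of the second factor'', so that its sign equals the number of cross-factor inversions. In our notation this is ``$L \cup A$ followed by $B$'' for $i$ even, giving $\sum_{b \in B}(|L|_{>b} + |A|_{>b})$ inversions, and ``$L \cup B$ followed by $A$'' for $i$ odd, giving $\sum_{a \in A}(|L|_{>a} + |B|_{>a})$ inversions. Now \cref{prop:init_vert} specialised to $U = \emptyset$ tells us that $a \in A \iff |L|_{>a}$ is odd and $b \in B \iff |L|_{>b}$ is even, so the $|L|_{>\cdot}$ sums collapse modulo~$2$. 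A short computation using $|A| + |B| = n - i$ then shows that $\mathsf{sign}(w_\mathcal{L}) + in$ differs from our $\varepsilon(L \cup A_L^\emptyset \otimes L \cup B_L^\emptyset)$ by the constant $\binom{i+1}{2} \equiv i/2$ modulo~$2$ when $i$ is even, and (after accounting for the Koszul sign $(|X|-1)(|Y|-1)$ introduced by $T$) by $\binom{i+1}{2} \equiv \lceil i/2 \rceil$ modulo~$2$ when $i$ is odd.

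The maximal case $U = \binom{[0,n]}{i+2}$ then follows immediately from \cref{prop:complement}, which gives $\Delta_i^{\binom{[0,n]}{i+2}} = (-1)^i T \Delta_i^\emptyset$; combined with the formula established above for $\Delta_i^\emptyset$ and the identity $T^2 = \id$, this yields the stated expressions (noting that $1 + \lceil i/2 \rceil \equiv \lfloor i/2 \rfloor \pmod 2$ for $i$ odd). The main obstacle is the sign bookkeeping in the third paragraph, which requires careful tracking of $|A|$, $|B|$, and $n$ modulo~$2$ and attention to the parity shift introduced by the transposition $T$.
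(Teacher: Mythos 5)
Your plan is correct and follows the same route as the paper: match terms via \cref{lem:lower_cub_spec}, handle the extremal element via \cref{prop:complement}, and carry out a direct sign comparison between $\mathsf{sign}(w_{\mathcal{L}})+in$ and $\varepsilon(L\cup A_L^\emptyset\otimes L\cup B_L^\emptyset)$. The only cosmetic difference is that the paper isolates this sign computation as a separate appendix lemma (\cref{lem:sign:steenrod}) and proves the odd case directly for $U=\binom{[0,n]}{i+2}$ rather than for $U=\emptyset$ with a $T$-Koszul sign, but these are equivalent.
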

\begin{proof}
It suffices to prove the claim for $\Delta_{i}^{\emptyset}$ for $i$ even and for $\Delta_{i}^{\binom{[0,n]}{i + 2}}$ for $i$ odd, since the remaining statements are then obtained by applying Proposition~\ref{prop:complement}.
For $L = \{l_{0}, l_{1}, \dots, l_{i}\} \in \binom{[0,n]}{i + 1}$, one obtains the corresponding term of $\Delta_{i}^\emptyset$ by \cref{const} and \cref{lem:lower_cub_spec}.
If $i$ is even, then the first interval in $L \cup A_{L}^\emptyset$ is $[0,l_{0}]$, and so this is the term associated to the overlapping partition $[0, l_{1}], [l_{1}, l_{2}], \dots, [l_{i}, n]$ in the Steenrod construction.
If $i$ is odd, then the first interval in $L \cup A_{L}^\emptyset$ is $[l_{0},l_{1}]$, and so the Steenrod construction gives $L \cup A_{L}^{\binom{[0,n]}{i + 2}} \otimes L \cup B_{L}^{\binom{[0,n]}{i + 2}}$.
Since this gives a bijection between overlapping partitions of $[0, n]$ into $i + 2$ intervals and elements of $\binom{[0, n]}{i + 1}$, modulo signs the terms of $\Delta_{i}$ coincide with those of $\Delta_{i}^{\emptyset}$ for $i$ even and $\Delta_{i}^{\binom{[0,n]}{i + 2}}$ for $i$ odd.
The fact that the signs coincide follows from \cref{lem:sign:steenrod}.
\end{proof}


\subsection{The homotopy formula}

We now show how our construction can be used to give a clean and conceptual proof of the fundamental fact that $\Delta_{i}$ gives a chain homotopy between $\Delta_{i - 1}$ and~$T\Delta_{i - 1}$.

The boundary of a term in the coproduct has the following neat description.
Recalling \cref{prop:face_term_bij}, we may talk of upper and lower facets of a basis element~$F$ of $\chains_{\bullet}(\simp^n) \otimes \chains_{\bullet}(\simp^n)$, meaning the respective terms corresponding to the upper and lower facets of the zonotope face corresponding to $F$.

\begin{proposition}
	\label{prop:key}
	Let $F = L \cup A \otimes L \cup B$ be a basis element of $\chains_{\bullet}(\simp^n) \otimes \chains_{\bullet}(\simp^n)$, where $\sz{L} = i + 1$.
Adopting the notation $F/k := L \cup (A \setminus k) \otimes L \cup (B \setminus k)$ for $k \in A \cup B$, we have that
\begin{align*}
\partial ((-1)^{\trmsgn(F)} F) 
\quad = \sum_{G \text{ lower facet}}(-1)^{\trmsgn(G)}G
\quad + \sum_{H \text{ upper facet}}(-1)^{\trmsgn(H) + 1}H \\
\quad \quad \quad + \sum_{k \in A \cup B}(-1)^{\trmsgn(F/k) + k + i}F/k \ .
\end{align*}
\end{proposition}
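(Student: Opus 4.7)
The plan is to expand $\partial F$ using the Leibniz rule for the tensor product of chain complexes, identify each resulting basis term with one of the three summands on the right-hand side, and then check the signs.

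Writing $X = L \cup A$ and $Y = L \cup B$, so that $A \cap B = \emptyset$, we have
\[\partial F = \partial X \otimes Y + (-1)^{\dg{X}} X \otimes \partial Y,\]
and every basis term of $\partial F$ comes from deleting a single vertex from $X$ or from $Y$. Removing $k \in A$ from $X$, or $k \in B$ from $Y$, produces $F/k$, leaving the generating vector set $L$ unchanged. Removing $l \in L$ from $X$ yields the face with generating vectors $L \setminus l$ and initial vertex $A$, while removing $l$ from $Y$ yields the face with generating vectors $L \setminus l$ and initial vertex $A \cup \{l\}$. By \cref{prop:up_low_facets}, these two faces are upper or lower facets of $F$ according to the parity of $l$ as a gap in $L \setminus l$; together, the four sub-cases exhaust every upper and lower facet of the zonotope face corresponding to $F$ under the bijection of \cref{prop:face_term_bij}.

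Having matched terms, it remains to verify the signs. For each type of term, the positional sign from the Leibniz rule combines with $(-1)^{\trmsgn(F)}$ to give a sign that must agree with $(-1)^{\trmsgn(G)}$ on a lower facet $G$, with $(-1)^{\trmsgn(H)+1}$ on an upper facet $H$, and with $(-1)^{\trmsgn(F/k)+k+i}$ on $F/k$. Each of these matches unpacks into a $\ZZ$ identity between the three-term formula for $\trmsgn(F)$ and the corresponding formula for the resulting term, accounting for how the partition $(L, A, B)$ and its sizes change.

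The main obstacle is the bookkeeping: passing from $F$ to a facet $G$ may move $l$ into $A$ or into the complement $B$, modify $|A|$ by $0$ or $1$, and shift the sum $\sum_{b} |A|_{<b}$ accordingly, and all of these must be reconciled with both the positional sign of $l$ in $X$ or $Y$ and the upper/lower parity prescribed by \cref{prop:up_low_facets}. The cleanest approach is to isolate each of the six sub-cases (two for $F/k$, four for facets) as a dedicated sign lemma in the appendix, so that the body of the proof collapses to enumerating the terms of the Leibniz expansion and invoking the appropriate lemma in each case.
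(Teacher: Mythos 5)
Your proposal is correct and takes essentially the same route as the paper: expand $\partial F$ via the Leibniz rule, identify the terms obtained by deleting an element of $A$ or $B$ with $F/k$, identify the terms obtained by deleting an element of $L$ from the first or second tensor factor with the facets of $F$ via \cref{prop:up_low_facets}, and defer the sign bookkeeping to dedicated lemmas in an appendix. The paper organizes those sign checks into four lemmas (one per Leibniz sub-case, with the upper/lower distinction absorbed into a single $\sz{L}_{>k}$ term) rather than your suggested six, but that is a cosmetic difference, not a mathematical one.
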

\begin{proof}
There are four different types of terms in $\partial((-1)^{\trmsgn(F)}F)$ to consider, corresponding to Lemmas~\ref{lem:sign:L_first}, \ref{lem:sign:L_second}, \ref{lem:sign:A}, and~\ref{lem:sign:B}.
\begin{enumerate}[wide]
\item We first consider terms of $\partial((-1)^{\trmsgn(F)}F)$ given by $(L\setminus k) \cup A \otimes L \cup B$ for $k \in L$.
In the expansion of $\partial((-1)^{\trmsgn(F)}F)$, this has sign
$\trmsgn(F) + \sz{L \cup A}_{<k}$, which equals $\trmsgn((L\setminus k) \cup A \otimes L \cup B) + \sz{L}_{>k}$ by Lemma~\ref{lem:sign:L_first}.
This therefore equals $\trmsgn((L\setminus k) \cup A \otimes L \cup B)$ if and only if $k$ is an even gap in $L$.
Then by Proposition~\ref{prop:up_low_facets}, $(L\setminus k) \cup A \otimes L \cup B$ is a lower facet of $F$ if and only if $k$ is an even gap in $L$.
Upper facets then have the opposite sign to $\trmsgn((L\setminus k) \cup A \otimes L \cup B)$.

\item We now consider terms of $\partial((-1)^{\trmsgn(F)}F)$ given by $L \cup A \otimes (L \setminus k) \cup B$ for $k \in L$.
In the expansion of $\partial((-1)^{\trmsgn(F)}F)$, this has sign $\trmsgn(F) + \sz{L \cup B}_{<k} + \sz{L \cup A} + 1$, which equals
$\trmsgn(L \cup A \otimes (L \setminus k) \cup B) + \sz{L}_{>k} + 1$ by Lemma~\ref{lem:sign:L_second}.
This therefore equals $\trmsgn(L \cup A \otimes (L\setminus k) \cup B)$ if and only if $k$ is an odd gap in $L$.
Similarly to (1), by Proposition~\ref{prop:up_low_facets}, $L \cup A \otimes (L \setminus k) \cup B$ is a lower facet of $F$ if and only if $k$ is an odd gap in $L$.
Upper facets then have the opposite sign to $\trmsgn(L \cup A \otimes (L \setminus k) \cup B)$, as before.

\item We now start considering terms which do not correspond to facets of~$F$, by looking at terms given by $L \cup (A \setminus k) \otimes L \cup B$ for $k \in A$.
In the expansion of $\partial((-1)^{\trmsgn(F)}F)$, this has sign $\trmsgn(F) + (L \cup A)_{<k}$, which equals $\trmsgn(F/k) + k + \sz{L} + 1 = \trmsgn(F/k) + k + i$, by Lemma~\ref{lem:sign:A}, as desired.

\item Finally, we consider terms given by $L \cup A \otimes L \cup (B \setminus k)$ for $k \in B$.
In the expansion of $\partial((-1)^{\trmsgn(F)}F)$, this has sign
$\trmsgn(F) + \sz{L \cup B}_{<k} + \sz{L \cup A} + 1$, which equals
$\trmsgn(F/k) + k + \sz{L} + 1 = \trmsgn(F/k) + k + i$, by Lemma~\ref{lem:sign:B}.
\end{enumerate}
\end{proof}

Showing that the coproduct $\Delta_{i}^{U}$ satisfies the homotopy formula is now straightforward.

\begin{theorem}
	\label{thm:homotopy_formula}
For any $i \geqslant 0$, and for any $U \in \bruhat{[0, n]}{i + 1}$, we have that
\[\partial \circ \Delta_i^U - (-1)^{i} \Delta_i^U \circ \partial = (1 + (-1)^{i} T)\Delta_{i - 1}^\emptyset \ . \]
\end{theorem}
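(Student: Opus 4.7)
The plan is to compute $\partial \circ \Delta_i^U$ on basis elements by invoking \cref{prop:key} cube by cube, then to recognise the outcome as the right-hand side. Because \[\Delta_i^U(S) = \Delta_i^{U/([0,n]\setminus S)}(S)\] for every face $S \subseteq [0,n]$ by \cref{const}, it suffices to verify the identity when $S$ is the top face of a simplex, with the understanding that the same argument then applies to every face of $\simp^n$ after replacing $U$ by the appropriate contraction. The case $|S| \leqslant i$ is automatic: both sides vanish, the only subtlety being that for $|S| = i$ one has $\Delta_{i-1}^\emptyset(S) = \pm S \otimes S$, which is annihilated by $1 + (-1)^i T$ since $T(S \otimes S) = (-1)^{i-1} S \otimes S$.

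Applying $\partial$ to $\Delta_i^U([0,n]) = \sum_F (-1)^{\varepsilon(F)} F$ and expanding via \cref{prop:key} produces three kinds of summands: lower facets of cubes, upper facets of cubes (with an extra sign), and contractions $F/k$ for $k \in A_{L}^U \cup B_{L}^U$. The geometric content of the proof is that any $i$-face of $Z([0,n], i+1)$ lying in the interior of the cubillage $\mathcal{Q}_U$ is shared by exactly two adjacent cubes, as the upper facet of one and the lower facet of the other. Since the sign $\varepsilon(G)$ attached to a facet $G = L' \cup A' \otimes L' \cup B'$ by \cref{const} depends only on the triple $(L', A', B')$ and not on the containing cube, the upper and lower contributions cancel at every interior facet. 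What survives is the sum over boundary facets of $Z([0,n], i+1)$; by the defining property of the extremal cubillages, the lower boundary reproduces $\Delta_{i-1}^\emptyset([0,n])$ and the upper boundary reproduces $-\Delta_{i-1}^{\binom{[0,n]}{i+1}}([0,n])$, which equals $(-1)^i T\Delta_{i-1}^\emptyset([0,n])$ by \cref{prop:complement}. Together they yield $(1 + (-1)^i T)\Delta_{i-1}^\emptyset([0,n])$.

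It remains to match the contraction terms with $(-1)^i \Delta_i^U(\partial[0,n])$. For each $k \in [0,n]$, \cref{prop:init_vert} implies that $A_L^{U/k} = A_L^U \setminus \{k\}$ whenever $k \notin L$, since membership in $U$ restricted to subsets not containing $k$ is unchanged and gap parities within $L$ are unaffected by deleting $k$. This gives a bijection between cubes of $\mathcal{Q}_U$ with $k \in A_L^U \cup B_L^U$ and cubes of $\mathcal{Q}_{U/k}$, under $F \mapsto F/k$; cubes of $\mathcal{Q}_U$ with $k \in L$ make no contraction contribution in \cref{prop:key}. Summing the contractions for fixed $k$ therefore gives $\Delta_i^{U/k}([0,n]\setminus k) = \Delta_i^U([0,n]\setminus k)$, and collecting the $(-1)^{k+i}$ signs of \cref{prop:key} against the $(-1)^k$ signs in $\partial[0,n]$ produces exactly $(-1)^i \Delta_i^U(\partial[0,n])$. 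The main obstacle is not conceptual but clerical: one must verify that the facet-cancellation in the second paragraph really occurs, i.e., that the intrinsic sign $\varepsilon$ is well-defined on facets regardless of which adjacent cube is considered, which is ensured by the formula in \cref{const}, and this has effectively already been established in \cref{prop:key}.
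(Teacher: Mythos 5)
Your proof is correct and follows the same approach as the paper's: apply \cref{prop:key} cube by cube, cancel the $F/k$ terms against $(-1)^i\Delta_i^U\circ\partial$, cancel the interior facets in pairs (one upper, one lower), and read off the boundary facets as $\Delta_{i-1}^\emptyset$ plus, via \cref{prop:complement}, $(-1)^iT\Delta_{i-1}^\emptyset$. The only difference is one of exposition: you make explicit the verification that $A_L^{U/k}=A_L^U\setminus\{k\}$ and the resulting bijection $F\mapsto F/k$ between cubes of $\mathcal{Q}_U$ not having $k$ as a generating vector and cubes of $\mathcal{Q}_{U/k}$, a step the paper leaves implicit in the phrase ``terms of $\partial\circ\Delta_i^U([0,n])$ that correspond to $F/k$''; this is a welcome detail, not a divergence.
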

\begin{proof}
We reason in terms of the cubillage $\mathcal{Q}_{U}$ and its induced subdivision of~$Z([0, n], i + 1)$.
We consider the terms of $\partial \circ \Delta_{i}^{U}([0, n])$ and apply \cref{prop:key}.
The argument for other basis elements of $\chains_{\bullet}(\simp^n)$ is similar.
For $k \in [0, n]$, the terms of $\partial \circ \Delta_{i}^U([0, n])$ that correspond to $F/k$ for cubes $F$ of $\mathcal{Q}_{U}$ have sign $\trmsgn(F/k) + k + i$, by \cref{prop:key}, whereas in $-(-1)^{i}\Delta_{i}^{U}((-1)^{k}[0, n]\setminus k)$ they have sign $\trmsgn(F/k) + k + i + 1$, and so they cancel.

By \cref{prop:key}, we have that the other terms of $\partial \circ \Delta_{i}^U$ are given by facets of cubes of the cubillage.
These come in two sorts: \emph{internal facets}, which are shared between two cubes of the cubillage and lie in the interior of $Z([0, n], i + 1)$ in the induced subdivision; and \emph{boundary facets}, which are only facets of a single cube of the cubillage and lie on the boundary of $Z([0, n], i + 1)$ in the induced subdivision.

Those that correspond to internal facets of the cubillage cancel out by Proposition~\ref{prop:key}, since they are an upper facet of one term and a lower facet of another.
Hence, we are left with the terms corresponding to boundary facets.
By Proposition~\ref{prop:key} and Proposition~\ref{prop:complement}, we have that the terms corresponding to lower facets of the zonotope give $\Delta_{i - 1}^{\emptyset}$, whereas the terms corresponding to upper facets give \[- \Delta_{i - 1}^{\binom{[0,n]}{i + 2}} = -(-1)^{i - 1} T\Delta_{i - 1}^{\emptyset},\] as desired.
\end{proof}

Note that in proving Theorem~\ref{thm:homotopy_formula}, all of the work went into proving that the signs matched up as desired.
Working with chain complexes of $\ZZ$-modules rather than $\mathbb{Z}$-modules, the result is immediate from the cubillage perspective.
We will work with $\ZZ$-modules when we consider Steenrod squares in \cref{sect:squares}.

\begin{figure}[!htb]
		\centerline{
		\resizebox{0.75\linewidth}{!}{
		\begin{tikzpicture}[scale=2.4,xscale=1.6]
		
		
		\coordinate(e) at (0,0);
		\coordinate(0) at (-1,1.4);
		\coordinate(01) at (-1,2.8);
		\coordinate(012) at (0,4.2);
		\coordinate(12) at (1,2.8);
		\coordinate(2) at (1,1.4);
		
		\draw (e) -- (0) -- (01) -- (012) -- (12) -- (2) -- (e);
		
		
		\coordinate(02) at (0,2.8);
		
		\draw (012) -- (02);
		\draw (02) -- (0);
		\draw (02) -- (2);
		
		
		\node at (e) [below = 1mm of e] {\tiny $\emptyset$};
		\node at (0) [left = 1mm of 0] {\tiny $0$};
		\node at (01) [left = 1mm of 01] {\tiny $01$};
		\node at (012) [above = 1mm of 012] {\tiny $012$};
		\node at (2) [right = 1mm of 2] {\tiny $2$};
		\node at (12) [right = 1mm of 12] {\tiny $12$};
		
		\node at (02) [below = 1mm of 02] {\tiny $02$};
		
		
		\node at ($(2)!0.5!(012)$) {\color{red} $+012 \otimes 01$};
		\node at ($(e)!0.5!(02)$) {\color{red} $-02 \otimes 012$};
		\node at ($(0)!0.5!(012)$) {\color{red} $+012 \otimes 12$};
		
		
		\coordinate(0012) at ($(e)!0.5!(0)$);
		\coordinate(0112) at ($(0)!0.5!(01)$);
		\coordinate(0122) at ($(01)!0.5!(012)$);
		
		\coordinate(0121) at ($(02)!0.5!(012)$);
		\coordinate(0201) at ($(2)!0.5!(02)$);
		\coordinate(0212) at ($(0)!0.5!(02)$);
		
		\coordinate(2012) at ($(e)!0.5!(2)$);
		\coordinate(1201) at ($(2)!0.5!(12)$);
		\coordinate(0120) at ($(12)!0.5!(012)$);
		
		
		\node at (0012) [left = 1mm of 0012] {\color{blue} \tiny $+0 \otimes 012$};
		\node at (0112) [left = 1mm of 0112] {\color{blue} \tiny $+01 \otimes 12$};
		\node at (0122) [left = 1mm of 0122] {\color{blue} \tiny $+012 \otimes 2$};
		
		\node at (0121) [left = 0.1mm of 0121] {\color{OliveGreen} \tiny $-012 \otimes 1$};
		\node at (0121) [right = 0.1mm of 0121] {\color{OliveGreen} \tiny $+012 \otimes 1$};
		
		\node [rotate=43] at (0212) [below = 1mm of 0212] {\color{OliveGreen} \tiny $+02 \otimes 12$};
		\node [rotate=43] at (0212) [above = 1mm of 0212] {\color{OliveGreen} \tiny $-02 \otimes 12$};
		
		\node [rotate=-43] at (0201) [above = 1mm of 0201] {\color{OliveGreen} \tiny $-02 \otimes 01$};
		\node [rotate=-43] at (0201) [below = 1mm of 0201] {\color{OliveGreen} \tiny $+02 \otimes 01$};
		
		\node at (2012) [right = 1mm of 2012] {\color{blue} \tiny $-2 \otimes 012$};
		\node at (1201) [right = 1mm of 1201] {\color{blue} \tiny $+12 \otimes 01$};
		\node at (0120) [right = 1mm of 0120] {\color{blue} \tiny $-012 \otimes 0$};
		
		\end{tikzpicture}}}
		\quad \quad
		\centerline{
		\resizebox{0.75\linewidth}{!}{
		\begin{tikzpicture}[scale=2.4,xscale=1.6]
		
		
		\coordinate(e) at (0,0);
		\coordinate(0) at (-1,1.4);
		\coordinate(01) at (-1,2.8);
		\coordinate(012) at (0,4.2);
		\coordinate(12) at (1,2.8);
		\coordinate(2) at (1,1.4);
		
		\draw (e) -- (0) -- (01) -- (012) -- (12) -- (2) -- (e);
		
		
		\coordinate(1) at (0,1.4);
		
		\draw (e) -- (1);
		\draw (1) -- (01);
		\draw (1) -- (12);
		
		
		\node at (e) [below = 1mm of e] {\tiny $\emptyset$};
		\node at (0) [left = 1mm of 0] {\tiny $0$};
		\node at (01) [left = 1mm of 01] {\tiny $01$};
		\node at (012) [above = 1mm of 012] {\tiny $012$};
		\node at (2) [right = 1mm of 2] {\tiny $2$};
		\node at (12) [right = 1mm of 12] {\tiny $12$};
		
		\node at (1) [above = 1mm of 1] {\tiny $1$};
		
		
		\node at ($(e)!0.5!(01)$) {\color{red} $-01 \otimes 012$};
		\node at ($(e)!0.5!(12)$) {\color{red} $-12 \otimes 012$};
		\node at ($(1)!0.5!(012)$) {\color{red} $+012 \otimes 02$};
		
		
		\coordinate(0012) at ($(e)!0.5!(0)$);
		\coordinate(0112) at ($(0)!0.5!(01)$);
		\coordinate(0122) at ($(01)!0.5!(012)$);
		
		\coordinate(1012) at ($(e)!0.5!(1)$);
		\coordinate(0102) at ($(1)!0.5!(01)$);
		\coordinate(1202) at ($(1)!0.5!(12)$);
		
		\coordinate(2012) at ($(e)!0.5!(2)$);
		\coordinate(1201) at ($(2)!0.5!(12)$);
		\coordinate(0120) at ($(12)!0.5!(012)$);
		
		
		\node at (0012) [below left = 1mm of 0012] {\color{blue} \tiny $+0 \otimes 012$};
		\node at (0112) [left = 1mm of 0112] {\color{blue} \tiny $+01 \otimes 12$};
		\node at (0122) [left = 1mm of 0122] {\color{blue} \tiny $+012 \otimes 2$};
		
		\node at (1012) [above left = 0.1mm of 1012] {\color{OliveGreen} \tiny $-1 \otimes 012$};
		\node at (1012) [above right = 0.1mm of 1012] {\color{OliveGreen} \tiny $+1 \otimes 012$};
		
		\node [rotate=-43] at (0102) [below = 1mm of 0102] {\color{OliveGreen} \tiny $-01 \otimes 02$};
		\node [rotate=-43] at (0102) [above = 1mm of 0102] {\color{OliveGreen} \tiny $+01 \otimes 02$};
		
		\node [rotate=43] at (1202) [above = 1mm of 1202] {\color{OliveGreen} \tiny $+12 \otimes 02$};
		\node [rotate=43] at (1202) [below = 1mm of 1202] {\color{OliveGreen} \tiny $-12 \otimes 02$};
		
		\node at (2012) [right = 1mm of 2012] {\color{blue} \tiny $-2 \otimes 012$};
		\node at (1201) [right = 1mm of 1201] {\color{blue} \tiny $+12 \otimes 01$};
		\node at (0120) [right = 1mm of 0120] {\color{blue} \tiny $-012 \otimes 0$};
		
		\end{tikzpicture}}}
		\caption{The upper and lower cubillages of $Z(3, 2)$, the associated $\Delta_{1}$ coproducts, and their boundaries}
		\label{fig:z32_lower_upper}
\end{figure}
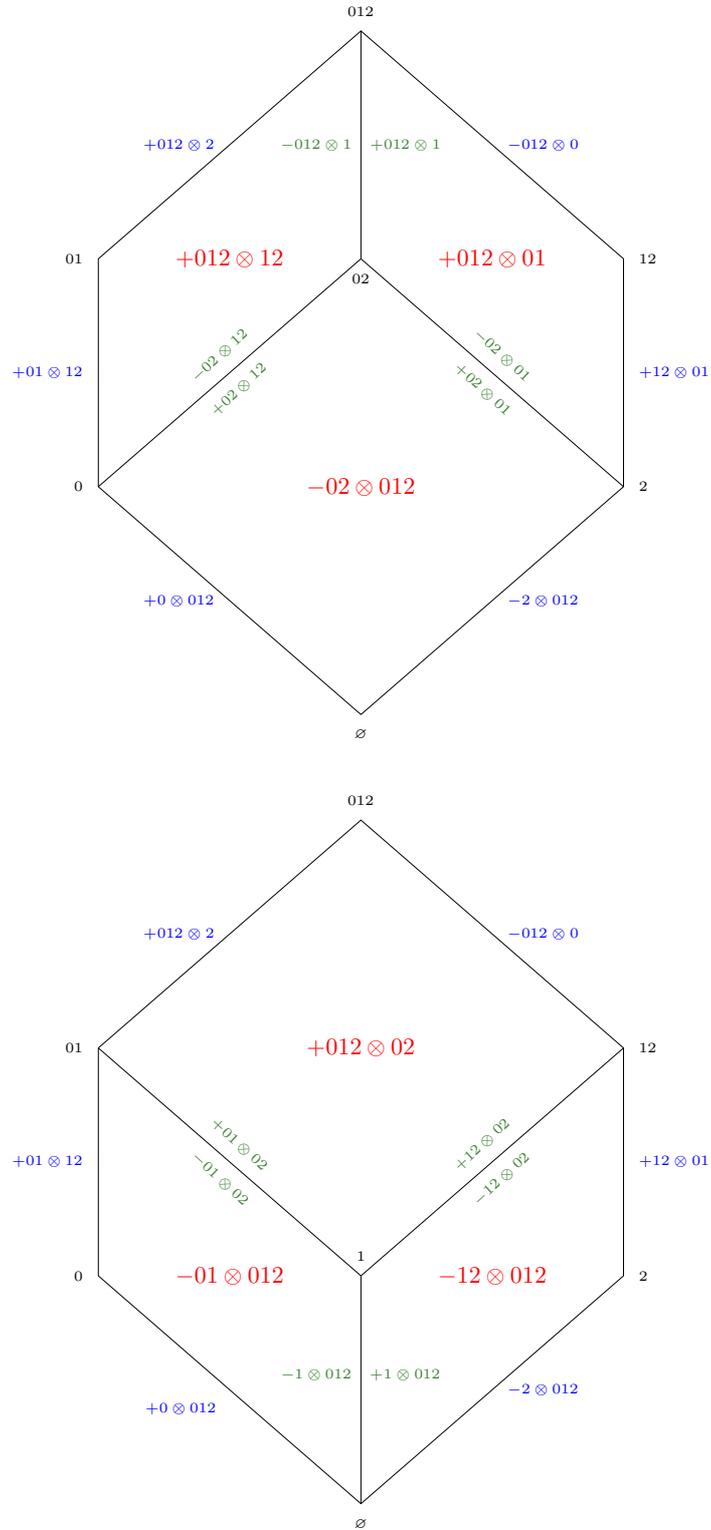

\begin{example}\label{ex:main}
	We illustrate how cubillages can be used to deduce the homotopy formula for cup-$i$ coproducts. 
	For the cup-$1$ case and $n = 2$, corresponding to the higher Bruhat poset $\bruhat{[0,2]}{2}$, there are two possible cubillages, giving two cup-$1$ coproducts. 
	These correspond to the Steenrod coproduct $\Delta_{1}$ and its opposite $T\Delta_{1}$, modulo signs.
	The cup-$0$ coproduct is 
	\[\Delta_{0}(012) = 0 \otimes 012 + 01 \otimes 12 + 012 \otimes 2 \ .\] 
	The opposite coproduct is given by 
	\[T\Delta_{0}(012) = 012 \otimes 0 - 12 \otimes 01 + 2 \otimes 012 \ .\]
	Note that $\Delta_{0} = \Delta_{0}^{\emptyset}$.
	For the two possible cup-1 coproducts $\Delta_{1}^{U}$, we wish to show that \[\Delta_{1}^{U} \circ \partial + \partial \circ \Delta_{1}^{U} = \Delta_{0} - T\Delta_{0} \ ,\] that is, the homotopy formula holds. 
	By \cref{const} and \cref{fig:z32_lower_upper}, our two cup-1 coproducts are
	\begin{align*}
	\Delta_{1}^{\{012\}}(012) &= 012 \otimes 01 - 02 \otimes 012 + 012 \otimes 12 \ , \\
	\Delta_{1}^{\emptyset}(012) &= - 01 \otimes 012 + 012 \otimes 02 - 12 \otimes 012 \ .
	\end{align*}
	Here the first is the Steenrod coproduct $\Delta_{1}$ and the second is $-T\Delta_{1}$; compare \cref{ex:back:steenrod}. 
	We have that the first coproduct comes from the cubillage at the top of \cref{fig:z32_lower_upper}, whereas the second comes from the bottom cubillage.
	We now verify the homotopy formula for $\Delta_{1}^{\emptyset}$.
	On the one hand, we have
	\begin{align*}
	\Delta_{1}^{\emptyset} \circ \partial(012) &= \Delta_{1}^{\emptyset}(12) - \Delta_{1}^{\emptyset}(02) + \Delta_{1}^{\emptyset}(01) =  - 12 \otimes 12 + 02 \otimes 02 - 01 \otimes 01 \ , 
	\end{align*}
	while on the other hand
	\begin{align*}
	\partial \circ \Delta_{1}^{\emptyset}(012) &= - \partial(01 \otimes 012) + \partial(012 \otimes 02) - \partial(12 \otimes 012) \\
	&= - 1 \otimes 012 + 0 \otimes 012 + 01 \otimes 12 - 01 \otimes 02 + 01 \otimes 01 \\
	&\quad + 12 \otimes 02 - 02 \otimes 02 + 01 \otimes 02 + 012 \otimes 2 - 012 \otimes 0 \\
	&\quad - 2 \otimes 012 + 1 \otimes 012 + 12 \otimes 12 - 12 \otimes 02 + 12 \otimes 01 \\
	&= (0 \otimes 012 + 01 \otimes 12 + 012 \otimes 2) - (2 \otimes 012 - 12 \otimes 01 + 012 \otimes 0) \\
	&\quad + (\cancel{1 \otimes 012} - \cancel{1 \otimes 012}) + (\bcancel{01 \otimes 02} - \bcancel{01 \otimes 02}) + (\cancel{12 \otimes 02} - \cancel{12 \otimes 02})  \\
	&\quad + 12 \otimes 12 - 02 \otimes 02 + 01 \otimes 01 \ .
	\end{align*} 
	Note that the terms from $\Delta^{\emptyset}_{1} \circ \partial$ cancel out, as can be seen in the final line above, and that the remaining terms from $\partial \circ \Delta^{\emptyset}_{1}$ come from faces of the cubes of the cubillage.
	Internal faces have terms coming from two cubes, which cancel each other out, as seen in the penultimate line.
	After all cancellations, the remaining terms come from the boundary of the zonotope, which give precisely $\Delta_{0} - T\Delta_{0}$, as seen in the third last line.
	The top of Figure~\ref{fig:z32_lower_upper} indicates how the verification of the homotopy formula works for $\Delta_{1}^{\{012\}}$. 
\end{example}

\begin{example}
	The figure appearing on the first page of the paper is an example of a coproduct for the $4$-simplex which does not come from the Steenrod coproduct or its opposite.
	We do not carry out the full verification of the homotopy formula, but instead illustrate in the figure that terms corresponding to internal faces of the cubillage cancel.
\end{example}

In fact, Construction~\ref{const} comprises \emph{all} coproducts that satisfy the homotopy formula, up to redundancies.
The idea is to run the proof of Theorem~\ref{thm:homotopy_formula} in reverse, so that if a coproduct satisfies the homotopy formula, then the cubes corresponding to its terms must have come from a cubillage.

\begin{theorem}\label{thm:all_coproducts}
Suppose that we have a coproduct $\Delta'_{i}\colon \chains_{\bullet}(\simp^n) \to \chains_{\bullet}(\simp^n) \otimes \chains_{\bullet}(\simp^n)$ with $i \geqslant 0$ such that
\begin{equation}\label{eq:all_coproducts}
\partial \circ \Delta'_i - (-1)^{i}\Delta'_i \circ \partial = (1 + (-1)^{i} T)\Delta_{i - 1}^\emptyset \ .
\end{equation}
If $i = 0$ suppose further that $\Delta'_{i}(p) = p \otimes p$ for all $p \in [0, n]$, and suppose otherwise that for all non-empty $S \subseteq [0, n]$, $\Delta'_{i}(S)$ has a minimal number of terms amongst coproducts which satisfy \eqref{eq:all_coproducts}.
Then we have $\Delta'_{i} = \Delta_{i}^U$ for some $U \in \bruhat{[0,n]}{i + 1}$.
\end{theorem}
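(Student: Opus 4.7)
My plan is to induct on $n$. For $n<i+1$, degree considerations force $\Delta'_i$ to vanish on every face, matching $\Delta_i^\emptyset$ for the unique $\emptyset\in\bruhat{[0,n]}{i+1}$. For $n=i$, the only possible top-face term is $[0,i]\otimes[0,i]$, whose coefficient is pinned down by the homotopy formula (the right-hand side is explicit, the $\Delta'_i\circ\partial$ piece vanishes by degree), and the basepoint hypothesis handles the $i=0$ subtlety. Assume the result in dimensions below $n$. By the inductive hypothesis on each facet $[0,n]\setminus\{p\}$, there exist $U^p\in\bruhat{[0,n]\setminus\{p\}}{i+1}$ with $\Delta'_i([0,n]\setminus\{p\})=\Delta_i^{U^p}([0,n]\setminus\{p\})$. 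The remaining task is to produce a single $U\in\bruhat{[0,n]}{i+1}$ with $U/p=U^p$ for every $p$ and $\Delta'_i([0,n])=\Delta_i^U([0,n])$.

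Every term of $\Delta'_i([0,n])$ has the form $(L\cup A)\otimes(L\cup B)$ with $L,A,B$ pairwise disjoint, $\sz{L}=i+1$, and $L\cup A\cup B=[0,n]$, forced by $\Delta'_i$ being of degree $i$. By \cref{prop:face_term_bij}, these terms encode a signed collection $\mathcal{F}$ of $(i+1)$-dimensional faces of $Z([0,n],n+1)$. Apply \cref{prop:key} to each term and split $\partial\circ\Delta'_i([0,n])$ into \emph{facet} terms, supported on all of $[0,n]$ and coming from upper/lower facets of the cubes in $\mathcal{F}$, and \emph{deep} terms, supported on the codimension-one faces $[0,n]\setminus\{k\}$. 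In the homotopy formula, $(1+(-1)^iT)\Delta_{i-1}^\emptyset([0,n])$ contributes only to the first group and $(-1)^i\Delta'_i\circ\partial([0,n])$ only to the second. By the inductive hypothesis the deep parts already cancel, so the signed facet sum of $\mathcal{F}$ must equal the signed outer boundary of $Z([0,n],i+1)$ lifted to $Z([0,n],n+1)$.

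The heart of the argument is a geometric rigidity claim: any minimal signed collection $\mathcal{F}$ of $(i+1)$-faces of $Z([0,n],n+1)$ whose signed facet sum (under the rule of \cref{prop:key}) equals that prescribed outer boundary must form a cubillage, and its signs must match $(-1)^{\trmsgn(\cdot)}$ from \cref{const}. The idea is to run the proof of \cref{thm:homotopy_formula} in reverse: by \cref{prop:up_low_facets}, a shared facet between neighbouring cubes is always an upper facet of one and a lower facet of the other, so the two contributions cancel precisely when the cubes tile along that facet. The only cancellation patterns compatible with the prescribed outer boundary project to a non-overlapping tiling of $Z([0,n],i+1)$. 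Minimality rules out bubble-like redundant pairs whose signed contributions cancel internally, and a direct sign-matching computation using \cref{prop:init_vert} forces the initial vertices, and hence the signs on $\mathcal{F}$, to match those of a genuine cubillage. This rigidity step is the principal obstacle of the proof.

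Finally, the cubillage extracted from $\mathcal{F}$ determines a consistent set $U\in\bruhat{[0,n]}{i+1}$ via \cref{prop:init_vert}. By construction, its restriction to each outer facet of $Z([0,n],i+1)$ agrees with the cubillage of $U^p$ (they share the same facet-matching data on that facet), so $U/p=U^p$, and $\Delta'_i=\Delta_i^U$ on all of $\chains_\bullet(\simp^n)$.
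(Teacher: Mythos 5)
Your high-level strategy (induction, cubillage interpretation via \cref{prop:face_term_bij}, facet/deep-term split via \cref{prop:key}) is the right one and matches the paper, but two places where you wave at a step are exactly where the real work of the proof lies, and one of these waves is based on a false claim.

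First, you assert that every term of $\Delta'_i([0,n])$ has the form $(L\cup A)\otimes(L\cup B)$ with $|L|=i+1$ and $L\cup A\cup B=[0,n]$, ``forced by $\Delta'_i$ being of degree $i$.'' This is not true. Degree only forces $|X|+|Y|=n+i+2$; writing $L=X\cap Y$, $A=X\setminus Y$, $B=Y\setminus X$, this gives $2|L|+|A|+|B|=n+i+2$ and says nothing about the support or about $|L|$. For instance with $i=1$, $n=3$ the term $012\otimes012$ has the right degree, but $|L|=3=i+2$ and support $\{0,1,2\}\neq[0,3]$. Ruling out such terms is one of the two main non-trivialities of the argument, and the paper devotes a full paragraph to each: it first shows (by a cancellation argument with the $\Delta'_i\circ\partial$ terms) that each $X_L\otimes Y_L$ must have exactly $i+1$ generating vectors, and then shows separately that it must be supported on $S$ rather than on some $S\cup\{q\}\setminus\{s_L\}$. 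Your proposal silently skips both steps by deducing them from a degree constraint that does not hold.

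Second, your ``geometric rigidity claim'' --- that a minimal signed collection of $(i+1)$-faces whose signed facet sum equals the prescribed outer boundary must be a cubillage with the signs of \cref{const} --- is precisely the content you flag as ``the principal obstacle'' but never prove. The paper does prove it, and the key inputs you are missing are: (a) the count that after establishing the generating-vector and support claims there are exactly $\binom{|S|}{i+1}$ distinct terms $X_L\otimes Y_L$, which is the number of cubes in any cubillage, so minimality forces these to be \emph{all} the terms; and (b) the observation (your \cref{prop:key} again) that if the corresponding faces fail to tile, internal facets will not cancel. It is also worth noting that the patching at the end --- that $U/p=U^p$ for all $p$ --- is obtained in the paper by a cancellation argument on $(\partial\circ\Delta'_i)(S)$, not by the informal ``they share the same facet-matching data'' you give. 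Finally, the paper inducts on subsets $S\subseteq[0,n]$ of a fixed $\simp^n$ rather than on $n$ itself, which avoids the additional issue that $\Delta'_i([0,n]\setminus\{p\})$ is a priori an element of $\chains_\bullet(\simp^n)\otimes\chains_\bullet(\simp^n)$ and need not be supported on $[0,n]\setminus\{p\}$, so an induction on $n$ cannot directly invoke the hypothesis without first re-establishing that support claim.
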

\begin{proof}
We prove the result by showing that for all non-empty $S \subseteq [0, n]$, there exists $U \in \bruhat{S}{i + 1}$ such that for all non-empty $S' \subseteq S$, $\Delta'_{i}(S') = \Delta_{i}^{U}(S')$.
We use induction on the size of $S$; the result is then given by the case $S = [0, n]$.
For $\sz{S} \leqslant i - 1$, we have $\Delta_{i - 1}^{\emptyset}(S) = 0$, so the right-hand side of~\eqref{eq:all_coproducts} is zero.
For $\sz{S} = i$, we have $\Delta_{i - 1}^{\emptyset}(S) = \pm S \otimes S$, and so $T\Delta_{i - 1}^{\emptyset}(S) = (-1)^{i - 1}\Delta_{i - 1}^{\emptyset}(S)$ and the right-hand side of~\eqref{eq:all_coproducts} is also zero.
Hence, for $\sz{S} \leqslant i$, $\Delta'_{i}(S) = 0$ satisfies~\eqref{eq:all_coproducts} and has the minimal number of terms amongst such coproducts.

We now consider the case $\sz{S} = i + 1$, which is the first non-trivial base case.
We have that $(\Delta'_{i} \circ \partial)(S) = 0$ by the previous paragraph.
For $i > 0$, the right-hand side of \eqref{eq:all_coproducts} has terms corresponding to the lower facets of $Z(S, i + 1)$ minus the upper facets, and so by \cref{prop:key}, we have that $\Delta'_{i}(S) = \pm S \otimes S$ satisfies~\eqref{eq:all_coproducts}.
Note that the sign is determined by the right-hand side of \eqref{eq:all_coproducts} and that \cref{prop:key} also shows that no other single term will satisfy~\eqref{eq:all_coproducts}.
For $i = 0$, we have that $\Delta'_{i}(S) = S \otimes S$ by assumption.
These base cases then hold, since we have $\Delta'_{i}(S) = \Delta_{i}^{U}(S)$ for the unique element $U \in \bruhat{S}{i + 1}$; for proper subsets $S'$ of $S$, we must have $\Delta'_{i}(S') = \Delta_{i}^{U}(S') = 0$ by the previous paragraph.

We now show the inductive step, where we suppose that $\sz{S} = k > i + 1$.
We know from the induction hypothesis that there exist $U_{s_p} \in \bruhat{S \setminus s_{p}}{i + 1}$ such that $\Delta'_{i}(S') = \Delta_{i}^{U_{s_p}}(S')$ for all non-empty $S' \subseteq S \setminus s_{p}$, where $S = \{s_{1}, s_{2}, \dots, s_{\sz{S}}\}$.
For each $L \in \binom{S}{i + 1}$, let $s_{L} = \min S \setminus L$ and let $X_{L} \otimes Y_{L}$ be the term of $\Delta'_{i}(S)$ which has in its boundary the term $A_{L}^{U_{s_{L}}} \cup L \otimes B_{L}^{U_{s_{L}}} \cup L$ of $\Delta'_{i}(S \setminus s_{L})$.

Note that we may talk about generating vectors of a term analogously to those of a face by \cref{prop:face_term_bij}.
We claim first that each term $X_{L} \otimes Y_{L}$ has $i + 1$ generating vectors, namely given by~$L$.
Indeed, since $X_{L} \otimes Y_{L}$ has $A_{L}^{U_{s_{L}}} \cup L \otimes B_{L}^{U_{s_{L}}} \cup L$ in its boundary, its generating vectors must either consist of $L$ or $L \cup \{s\}$ for $s \in [0, n] \setminus L$.
Hence, suppose that we have $X_{L} = L \cup \{s\} \cup A$ and $Y_{L} = L \cup \{s\} \cup B$ for some disjoint $A, B \subseteq S \setminus (L \cup \{s, s_{L}\})$ with  $L \cup \{s\} \cup A \cup B = S \setminus s_{L}$.
However, $X_{L} \otimes Y_{L}$ then has both $L \cup A \otimes L \cup \{s\} \cup B$ and $L \cup \{s\} \cup A \otimes L \cup B$ in its boundary.
Thus, after cancelling $\partial(X_{L} \otimes Y_{L})$ with $(\Delta'_{i} \circ \partial)(S)$, there must still be a term supported on $S \setminus \{s_{L}\}$ with generating vectors $L$.
Thus terms supported on $S \setminus \{s_{L}\}$ with generating vectors $L$ can never be completely cancelled by the boundary of $X_{L'} \otimes Y_{L'}$ for any $L' \in \binom{S}{i + 1}$ if $X_{L'} \otimes Y_{L'}$  has $i + 2$ generating vectors.
We conclude that we must have $X_{L} = L \cup A$, $Y_{L} = L \cup B$, with $L$, $A$, $B$ disjoint.
Moreover, each term $X_{L} \otimes Y_{L}$ is then distinct.
There are then $\binom{\sz{S}}{i + 1}$ of these terms, which is the number of cubes of a cubillage of $Z(S, i + 1)$.
Since we have assumed that $\Delta'_{i}(S)$ has the minimum number of terms possible, we deduce that the terms $X_{L} \otimes Y_{L}$ comprise all terms of $\Delta'_{i}(S)$.

We now show that $X_{L} \otimes Y_{L}$ is actually supported on $S$.
The alternative is that it is supported on $S \cup \{q\} \setminus s_{L}$ for $q \notin S \cup \{s_{L}\}$.
If $A \cup B \neq \emptyset$, then taking the boundary at an element of $A$ or $B$ gives a term with generating vectors $L$ which is not supported on $S$ and so cannot cancel with any of the other terms in the formula~\eqref{eq:all_coproducts}.
Indeed, a consequence of the previous paragraph is that all terms of $(\partial \circ \Delta'_{i})(S)$ with generating vectors $L$ are in the boundary of $X_{L} \otimes Y_{L}$.
If $A \cup B = \emptyset$, then $\sz{S} = i + 2$ and $X_{L'} \otimes Y_{L'} = L' \cup \{q_{L'}\} \otimes L'$ or $L' \otimes L' \cup \{q_{L'}\}$ for some $q_{L'} \in [0, n] \setminus L'$ all~$L' \in \binom{S'}{i + 1}$.
If $q_{L} \notin S$, then boundaries of $X_{L} \otimes Y_{L}$ can only cancel with those of $X_{L'} \otimes Y_{L'}$ if $q_{L'} = q_{L}$.
But even in this case we cannot get cancellations, since $L$ and $L'$ differ by at least one element.
Thus $q_{L'} \in S$ for all~$L'$, as required. 

Using \cref{prop:face_term_bij} we can then identify the terms $X_{L} \otimes Y_{L}$ of $\Delta'_{i}(S)$ with $(i + 1)$-dimensional faces of $Z(S, \sz{S})$ with generating vectors~$L$.
It is clear from \cref{prop:key} that if these faces do not form a cubillage of $Z(S, i + 1)$, then terms of $(\partial \circ \Delta'_{i})(S)$ from facets of these faces will not cancel.
The signs of the terms of $\Delta'_{i}(S)$ are moreover determined by the signs of the terms from the facets of $Z(S, i + 1)$ given by the right-hand side of~\eqref{eq:all_coproducts}.
Hence, there is a cubillage $U \in \bruhat{S}{i + 1}$ such that $\Delta'_{i}(S) = \Delta_{i}^{U}(S)$.

It then follows from \cref{prop:key} that the terms of $(\partial \circ \Delta'_{i}(S))$ supported on $S \setminus s_{p}$ are given by $\Delta_{i}^{U/s_{p}}$.
If these terms are to cancel, we must have $U/s_{p} = U_{s_p}$, and so we indeed have $\Delta'_{i}(S') = \Delta_{i}^{U}(S')$ for all non-empty $S' \subseteq S$, as desired.
The result then follows by induction.
\end{proof}

\begin{remark}
Of course, one can find other coproducts which satisfy the homotopy formula by taking $\Delta'_{i} := \Delta_{i}^U - \Delta_{i}^{U'} + \Delta_{i}^{U''}$ for $U, U', U'' \in \bruhat{[0, n]}{i + 1}$, for instance.
\end{remark}


\subsection{Homotopies from covering relations}\label{sect:cov_rel}

We now show how, given $U, V \in \bruhat{[0, n]}{i}$ with $U \lessdot V$ a covering relation, one can construct a homotopy from $\Delta_{i - 1}^{V}$ to $\Delta_{i - 1}^U$.
This is essentially just a different way of recasting \cref{const}, but this perspective will be useful in \cref{sect:imp:reorient}.
We will obtain an alternative proof of Theorem~\ref{thm:homotopy_formula}: since $\Delta_{i - 1}^{\emptyset}$ and $(-1)^{i - 1}T\Delta_{i - 1}^\emptyset$ correspond to the minimal and maximal elements of $\bruhat{[0, n]}{i}$, we can take the sequence of homotopies corresponding to any maximal chain of covering relations in $\bruhat{[0, n]}{i}$.
Such a maximal chain then gives an element of~$\bruhat{[0, n]}{i + 1}$ and the coproduct giving the homotopy will be precisely the one from \cref{const}.

\begin{construction}\label{const:cov_rel}
Let $F$ be a face of $Z([0, n], n + 1)$ with generating vectors $L$ and initial vertex $A$ such that $\sz{L} = i + 1$, with $B := [0, n] \setminus (L \cup A)$ as usual.
We define a coproduct
\[\Delta_{i}^F \colon \chains_{\bullet}(\simp^n) \to \chains_{\bullet}(\simp^n) \otimes \chains_{\bullet}(\simp^n)\]
by $\Delta_{i}^F([0, n]) := (-1)^{\trmsgn(L \cup A \otimes L \cup B)} L \cup A \otimes L \cup B$.
Then we extend inductively to lower-dimensional faces analogously to the usual way:
\[\Delta_{i}^F([0, n] \setminus p) := (-1)^{\trmsgn(L \cup (A\setminus p) \otimes L \cup (B\setminus p))} L \cup (A \setminus p) \otimes L \cup (B \setminus p)\]
if $p \notin L$, and $\Delta_{i}^F([0, n] \setminus p) = 0$ if $p \in L$.
Here we have only specified the coproduct on basis elements, with the values on other elements obtained by extending linearly.
\end{construction}

We have the following ``local'' version of the homotopy formula.

\begin{theorem}\label{thm:cov_rel}
Let $U, V \in \bruhat{[0, n]}{i}$ such that $U \lessdot V$, with this covering relation given by the dimension $i + 1$ face $F$ of $Z([0, n], n + 1)$.
Then we have
\begin{equation}\label{eq:cov_rel}
\partial \circ \Delta_{i}^F - (-1)^i \Delta_{i}^F \circ \partial = \Delta_{i - 1}^U - \Delta_{i - 1}^{V} \ .
\end{equation}
That is, $\Delta_{i}^F$ gives a homotopy from $\Delta_{i - 1}^{V}$ to $\Delta_{i - 1}^{U}$.
\end{theorem}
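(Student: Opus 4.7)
The plan is to evaluate both sides of \eqref{eq:cov_rel} on an arbitrary basis element $X \subseteq [0, n]$ of $\chains_{\bullet}(\simp^n)$ and to split into two cases according to whether the set $L$ of generating vectors of $F$ is contained in $X$. Conceptually, this is a localisation of \cref{thm:homotopy_formula}: the cancellation now happens at the single face $F$ rather than across an entire cubillage.

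Suppose first that $L \not\subseteq X$. Then by the inductive extension in \cref{const:cov_rel} we have $\Delta_{i}^{F}(X) = 0$ and $\Delta_{i}^{F}(X \setminus \{p\}) = 0$ for every $p \in X$, so the left-hand side of \eqref{eq:cov_rel} vanishes on $X$. For the right-hand side, $V = U \cup \{L\}$ differs from $U$ only by $L$, and since $L \notin \binom{X}{i + 1}$, the contractions $U/([0, n] \setminus X)$ and $V/([0, n] \setminus X)$ coincide; by \cref{const} this forces $\Delta_{i - 1}^{U}(X) = \Delta_{i - 1}^{V}(X)$, so both sides vanish on $X$.

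Suppose now that $L \subseteq X$. Then $\Delta_{i}^{F}(X)$ is a single term which, via \cref{prop:face_term_bij}, corresponds to the face $F_{X}$ of $Z(X, |X|)$ with generating vectors $L$ and initial vertex $A \cap X$. Running the argument of \cref{thm:homotopy_formula} locally, \cref{prop:key} applied to $F_{X}$ decomposes $\partial \Delta_{i}^{F}(X)$ into three groups: lower facets of $F_{X}$, upper facets (with opposite sign), and ``degenerate'' terms $F_{X}/k$ for $k \in (A \cup B) \cap X$. By direct comparison with the recursive formula defining $\Delta_{i}^{F}$ on codimension-one faces of $X$, the degenerate terms match $(-1)^{i} \Delta_{i}^{F}(\partial X)$ and therefore cancel in the left-hand side. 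The remaining lower- and upper-facet contributions must then match $(\Delta_{i - 1}^{U} - \Delta_{i - 1}^{V})(X)$: since $U \lessdot V$ is an increasing flip at $F$, the cubillages $\mathcal{Q}_{U}$ and $\mathcal{Q}_{V}$ agree outside $F$, and because $L \in \binom{X}{i + 1}$ the contracted consistent sets still realise the corresponding flip at $F_{X}$ in $\bruhat{X}{i}$. Consequently every cube cancels in $\Delta_{i - 1}^{U}(X) - \Delta_{i - 1}^{V}(X)$ except for the lower facets of $F_{X}$ (coming from $\mathcal{Q}_{U}$) minus the upper facets (coming from $\mathcal{Q}_{V}$), and the signs agree because $\trmsgn$ depends only on the underlying term.

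The main obstacle is this last identification: one must verify that the initial vertex assigned by $\mathcal{Q}_{U/([0, n] \setminus X)}$ to the cube with generating vectors $L \setminus \{l\}$ coincides with the initial vertex of the corresponding lower facet of $F_{X}$ prescribed by \cref{prop:up_low_facets}, and symmetrically for $V$ and the upper facets. This compatibility is forced by the consistency of $U$ and $V$ together with \cref{prop:init_vert}, and is precisely the combinatorial content of the bijection between increasing flips and covering relations in the higher Bruhat order; everything else in the argument is sign bookkeeping directly inherited from \cref{prop:key}.
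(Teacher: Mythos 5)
Your proposal is correct and takes essentially the same route as the paper: evaluate at a basis simplex, identify $\Delta_{i-1}^U - \Delta_{i-1}^V$ with the signed lower-minus-upper facets of $F$, and apply \cref{prop:key}. The paper proves the identity only for $X = [0,n]$ and dismisses the remaining basis elements with ``the result following similarly''; you make the case split explicit (treating $L \not\subseteq X$ and $L \subseteq X$ separately) and correctly flag the one real check this requires, namely that contraction commutes with flips, i.e.\ that $\mathcal{Q}_{U/([0,n]\setminus X)}$ and $\mathcal{Q}_{V/([0,n]\setminus X)}$ differ by a flip at the contracted face $F_X$ — which does follow from \cref{prop:init_vert} since membership of $a \in \init{U}{L\setminus l}$ for $a \in X$ depends only on $U \cap \binom{X}{i+1}$.
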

\begin{proof}
By Construction~\ref{const}, we have that
\[\Delta_{i - 1}^{U}([0, n]) - \Delta_{i - 1}^{V}([0, n]) = \sum_{\substack{G \text{ lower}\\ \text{facet of } F}} (-1)^{\trmsgn(G)}G + \sum_{\substack{H \text{ upper} \\ \text{facet of } F}} (-1)^{\trmsgn(H) + 1}H \ , \]
since, by assumption, the cubes of $U$ and $V$ only differ in that $U$ contains the lower facets of $F$ whilst $V$ contains the upper facets.
It then follows from Proposition~\ref{prop:key} that $\partial \circ \Delta_{i}^F([0, n]) = \Delta_{i - 1}^U([0, n]) - \Delta_{i - 1}^{V}([0, n]) + (-1)^i \Delta_{i}^F \circ \partial([0, n])$.
This is the desired statement for $[0, n]$, with the result following similarly for other basis elements of $\chains_{\bullet}(\simp^n)$.
\end{proof}

We can now give an alternative proof of the homotopy formula.

\begin{proof}[Alternative proof of \cref{thm:homotopy_formula}]
If we let $U \in \bruhat{[0, n]}{i + 1}$, then, comparing Construction~\ref{const} and Construction~\ref{const:cov_rel}, we have that \[\Delta_{i}^U = \sum_{F \text{ cube of }U} \Delta_{i}^F \ .\]
We then have that \[\partial \circ \Delta_{i}^U - (-1)^i \Delta_{i}^U \circ \partial = \sum_{F \text{ cube of }U} \bigl(\partial \circ \Delta_{i}^F - (-1)^i \Delta_{i}^F \circ \partial \bigr) \ .\]
According to the fundamental theorem of the higher Bruhat orders (\cref{thm:fund_hbo}), the faces of $U$ can be ordered such that they form a maximal chain of covering relations in $\bruhat{[0, n]}{i}$.
Hence, using the above \cref{thm:cov_rel}, we have that \[\sum_{F \text{ cube of }U} \bigl( \partial \circ \Delta_{i}^F - (-1)^i \Delta_{i}^F \circ \partial \bigr) = \Delta_{i - 1}^\emptyset - \Delta_{i - 1}^{\binom{[0, n]}{i + 1}} \ ,\] 
which is equal to $\Delta_{i - 1}^\emptyset + (-1)^iT\Delta_{i - 1}^{\emptyset}$ by \cref{prop:complement}.
\end{proof}

This gives a clear conceptual picture of our construction: to any element in the higher Bruhat orders, it associates a linear coproduct; and to any relation between two elements in the higher Bruhat orders, it associates a chain homotopy between the corresponding coproducts. 


\section{Implications of the construction}
\label{sect:imp}

In this section, we extend our construction of cup-$i$ coproducts to simplicial complexes (\cref{sect:imp:simp_comp}), and also consider singular homology (\cref{sect:imp:sing}).
In this latter case, under natural restrictions, only the Steenrod coproduct and its opposite are possible.
Next, we show how to construct homotopies between $\Delta_{i}^U$ and~$T\Delta_{i}^U$ for arbitrary $U \in \bruhat{[0, n]}{i+1}$, using the so-called ``reoriented higher Bruhat orders" (\cref{sect:imp:reorient}).
We finish by showing that our coproducts can be used to define Steenrod squares in cohomology and that, in fact, they all induce the same Steenrod squares.


\subsection{Simplicial complexes}
\label{sect:imp:simp_comp}

So far we have only considered coproducts on the chain complex of the simplex, but it is natural to ask for arbitrary simplicial complexes what coproducts $\Delta'_{i}$ exist and give a homotopy between $\Delta_{i - 1}^\emptyset$ and $T\Delta_{i - 1}^\emptyset$.

A \defn{finite simplicial complex} is a set $\Sigma$ of non-empty subsets of $[0, n]$ such that for each $\sigma \in \Sigma$, and non-empty $\tau \subseteq \sigma$, we have that $\tau \in \Sigma$.
Given a finite simplicial complex $\Sigma$, one can form its \defn{geometric realisation} $||\Sigma||$ by taking a $p$-simplex for every $\sigma \in \Sigma$ with $\sz{\sigma} = p+1$, and gluing them together according to $\Sigma$, see \cite[Sec.~2.1]{h02}.
We will hence refer to the elements of $\Sigma$ as \defn{simplices}.
We write $\Sigma_{p}$ for the set of \defn{$p$-simplices} of $\Sigma$, that is, simplices $\sigma \in \Sigma$ with $\sz{\sigma} = p+1$.
A simplex $\sigma \in \Sigma$ is \defn{maximal} if there is no $\tau \in \Sigma \setminus \{\sigma\}$ such that $\tau \supset \sigma$.
We denote the set of maximal simplices of $\Sigma$ by $\max(\Sigma)$.
The cellular chain complex $\chains_{\bullet}(\Sigma)$ of $\Sigma$ has $\Sigma_{p}$ as basis of $\chains_{p}(\Sigma)$, with the boundary map defined as in \cref{sect:back:at_conv}.

Given a finite simplicial complex $\Sigma$ on $[0, n]$, our analogue of the higher Bruhat poset is as follows.

\begin{definition}
Given a set of subsets $U \subseteq \binom{[0, n]}{i + 2}$ and a subset $M \in \binom{[0, n]}{i + 3}$, we say that $U$ is \defn{consistent with respect to $M$} if and only if the intersection $\packet{M} \cap U$ is a beginning segment of $\packet{M}$ in the lexicographic order or an ending segment.

We then say that $U$ is \defn{$\Sigma$-consistent} if it is consistent with respect to all $(i + 2)$-simplices of $\Sigma$.
We write $\bruhat{\Sigma}{i + 1}$ for the set of $\Sigma$-consistent subsets of $\Sigma_{i + 1}$.
\end{definition}

As the notation suggests, we have that $\{U \cap \Sigma_{i + 1} \st U \in \bruhat{[0,n]}{i + 1}\} \subset \bruhat{\Sigma}{i + 1}$, with the equality $\bruhat{\Sigma}{i + 1}=\bruhat{[0,n]}{i + 1}$ if $\Sigma$ is a $n$-simplex.
However, the reverse inclusion $\{U \cap \Sigma_{i + 1} \st U \in \bruhat{[0,n]}{i + 1}\} \supseteq \bruhat{\Sigma}{i + 1}$ does not hold in general, as the following example shows. 

\begin{example}\label{ex:non_order_complex}
Consider the simplicial complex $\Sigma$ on $[0, 3]$ with $\max(\Sigma) = \{012, 013, 023, 123\}$.
Then $\{012, 123\}$ is a $\Sigma$-consistent subset of $\Sigma_{2}$, and so an element of $\bruhat{\Sigma}{2}$, since $\Sigma$ has no 3-simplices.
However, this is clearly not a consistent subset of $\binom{[0, 3]}{3}$ in the usual sense.
\end{example}

We observe that $\bruhat{\Sigma}{i + 1}$ is just a set, rather than a poset.
It is not clear in general whether there is a sensible partial order on $\bruhat{\Sigma}{i + 1}$.
For an element of the higher Bruhat orders $U \in \bruhat{[0, n]}{i + 1}$, if $U \neq \binom{[0, n]}{i + 2}$, there is always $K \in \binom{[0, n]}{i + 2} \setminus U$ such that $U \cup \{K\}$ is consistent.
This is not the case for $\bruhat{\Sigma}{i + 1}$, as the following example shows.

\begin{example}\label{ex:sigma_p_cycles}
Consider the simplicial complex $\Sigma$ on $[0, 6]$ with \[
\max(\Sigma) = \{0135, 0145, 0235, 0245\}
\] and consider \[
U = \{013, 024, 145, 235\}.
\]
Then $U$ is a $\Sigma$-consistent subset of~$\Sigma_{2}$.
However, if we were to try to add $025$ to $U$, then consistency with respect to $0235$ requires that we add $035$ as well.
But then consistency with respect to $0135$ requires that we also add $015$.
In turn, consistency with respect to $0145$ requires also adding $045$.
Finally, consistency with respect to $0245$ requires adding $025$, which is what we were trying to add in the first place.
Note that there are no other $2$-simplices that can be added to $U$ whilst preserving $\Sigma$-consistency.
\end{example}

Hence, one cannot hope for covering relations in $\bruhat{\Sigma}{i + 1}$ given by adding single subsets; subsets have to be added to $U$ simultaneously, rather than one-by-one, in order to preserve consistency.
It is worth also saying that having the relation on $\bruhat{\Sigma}{i + 1}$ as straightforward inclusion does not work, since this does not coincide with the relation on $\bruhat{[0, n]}{i + 1}$ \cite[Thm~4.5]{z93}.

Next, we observe that restricting an element of the poset $\bruhat{\Sigma}{i + 1}$ to a face $\sigma$ of $\Sigma$ gives an element of the corresponding poset $\bruhat{\sigma}{i + 1}$.

\begin{lemma}\label{lem:simp_comp_simplex}
If $U \in \bruhat{\Sigma}{i + 1}$ and $\sigma \in \Sigma$, then $U_{\sigma} := U \cap \binom{\sigma}{i + 2} \in \bruhat{\sigma}{i + 1}$.
\end{lemma}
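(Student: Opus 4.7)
The plan is to verify directly that $U_\sigma$ satisfies the $\sigma$-consistency condition. This is essentially a bookkeeping exercise: the $\Sigma$-consistency of $U$ already carries all the information needed.

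First, I would fix an arbitrary $M \in \binom{\sigma}{i+3}$. Since $\sigma \in \Sigma$ and $\Sigma$ is a simplicial complex (closed under taking non-empty subsets), the non-empty subset $M \subseteq \sigma$ satisfies $M \in \Sigma$, and hence $M \in \Sigma_{i+2}$. Thus the $\Sigma$-consistency of $U$ applies to $M$.

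Next, I would note the set-theoretic identity $\packet{M} \cap U_\sigma = \packet{M} \cap U$. This follows because every element of $\packet{M}$ is of the form $M \setminus \{m\}$ for some $m \in M$, which is a subset of $\sigma$ of cardinality $i+2$. So $\packet{M} \subseteq \binom{\sigma}{i+2}$, and intersecting further with $\binom{\sigma}{i+2}$ (as in the definition of $U_\sigma$) changes nothing.

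Finally, since $U$ is $\Sigma$-consistent and $M \in \Sigma_{i+2}$, the intersection $\packet{M} \cap U$ is a beginning or ending segment of $\packet{M}$ in the lexicographic order. By the identity above, the same holds for $\packet{M} \cap U_\sigma$. As $M$ was arbitrary, $U_\sigma$ is $\sigma$-consistent, i.e. $U_\sigma \in \bruhat{\sigma}{i+1}$. There is no real obstacle here—the only mildly delicate point is confirming that $\packet{M}$ lives inside $\binom{\sigma}{i+2}$, which is immediate.
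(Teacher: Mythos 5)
Your proof is correct and takes the same route as the paper's (one-line) argument, just spelling out the details: you correctly identify that the crucial facts are $\binom{\sigma}{i+3}\subseteq\Sigma$ (so $\Sigma$-consistency applies to each $M\in\binom{\sigma}{i+3}$) and $\packet{M}\subseteq\binom{\sigma}{i+2}$ (so $\packet{M}\cap U_\sigma=\packet{M}\cap U$). Nothing to add.
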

\begin{proof}
It follows from the definition of a simplicial complex that $\binom{\sigma}{i + 2} \subseteq \Sigma$.
Therefore, $U_{\sigma}$ is a consistent subset of $\binom{\sigma}{i + 2}$ in the usual sense, and so $U_{\sigma} \in \bruhat{\sigma}{i + 1}$.
\end{proof}

We can now construct coproducts on the cellular chain complex $\chains_{\bullet}(\Sigma)$ of any finite simplicial complex $\Sigma$ as follows.

\begin{construction}\label{const:simp_comp}
Let $\Sigma$ be a finite simplicial complex on $[0, n]$, let $U \in \bruhat{\Sigma}{i + 1}$, and given a simplex $\sigma \in \Sigma$, let us write $U_{\sigma} := U \cap \binom{\sigma}{i + 2}$.
Then, we define
\begin{align*}
\Delta^U_{i} \colon \chains_{\bullet}(\Sigma) &\to \chains_{\bullet}(\Sigma) \otimes \chains_{\bullet}(\Sigma) \\
\sigma &\mapsto \Delta_{i}^{U_{\sigma}}(\sigma) 
\end{align*}
\end{construction}

As in the case of the standard simplex, the coproducts $\Delta_{i}^U$ on $\chains_{\bullet}(\Sigma)$ from \cref{const:simp_comp} satisfy the homotopy formula, and all reasonable families of coproducts satisfying the homotopy formula arise from this construction.

\begin{theorem} \label{thm:simp_comp}
For any $U \in \bruhat{\Sigma}{i + 1}$ we have that
\[\partial \circ \Delta^U_{i} - (-1)^i \Delta^U_{i} \circ \partial = (1 + (-1)^i T)\Delta_{i - 1}^\emptyset \ .\]
Moreover, any coproduct $\Delta'_{i}$ which satisfies this formula, for which $\Delta'_{i}(\sigma) = \sigma \otimes \sigma$ for all $\sigma \in \Sigma_{0}$ if $i = 0$, and which otherwise has a minimal number of terms, arises from \cref{const:simp_comp}.
\end{theorem}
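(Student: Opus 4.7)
The plan is to reduce both claims to the corresponding results on the standard simplex, \cref{thm:homotopy_formula} and \cref{thm:all_coproducts}, by treating each simplex $\sigma \in \Sigma$ separately and gluing. The essential compatibility is that for $\sigma \in \Sigma$ and $p \in \sigma$ one has $U_{\sigma \setminus p} = U_{\sigma} / p$, so that the inductive definition of $\Delta_i^{U_\sigma}$ on the faces of $\sigma$ agrees with the simplex-by-simplex definition in \cref{const:simp_comp}. \cref{lem:simp_comp_simplex} ensures $U_\sigma \in \bruhat{\sigma}{i + 1}$, so \cref{thm:homotopy_formula} is available for every $U_\sigma$.

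For the first claim, I would fix $\sigma \in \Sigma$ and expand
\[
\bd \circ \Delta_i^U(\sigma) - (-1)^i\, \Delta_i^U \circ \bd(\sigma)
\]
using \cref{const:simp_comp}; the identity $U_{\sigma \setminus p} = U_\sigma / p$ lets me rewrite this as $\bd \circ \Delta_i^{U_\sigma}(\sigma) - (-1)^i \Delta_i^{U_\sigma} \circ \bd(\sigma)$, where $\sigma$ is now viewed as the standard simplex on its own vertex set. \cref{thm:homotopy_formula} then identifies this expression with $(1 + (-1)^i T) \Delta_{i-1}^\emptyset(\sigma)$, and extending linearly over simplices gives the formula on all of $\chains_\bullet(\Sigma)$.

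For the second claim, I would induct on $\dim \sigma$, mirroring the inductive template of \cref{thm:all_coproducts}. The hypothesis is that for every $\tau \in \Sigma$ of smaller dimension there is $U_\tau \in \bruhat{\tau}{i + 1}$ such that $\Delta'_i(\tau') = \Delta_i^{U_\tau \cap \binom{\tau'}{i + 2}}(\tau')$ for all non-empty $\tau' \subseteq \tau$, with compatible restrictions. The heart of the inductive step is a support argument: every term of $\Delta'_i(\sigma)$ must lie in $\chains_\bullet(\sigma) \otimes \chains_\bullet(\sigma)$. Any term of $\Delta'_i(\sigma)$ supported outside $\sigma$ would contribute to $\bd \circ \Delta'_i(\sigma)$ terms that cannot be cancelled either by $-(-1)^i \Delta'_i \circ \bd(\sigma)$ (whose terms are supported on proper faces of $\sigma$ by induction) or by the right-hand side $(1 + (-1)^i T)\Delta_{i-1}^\emptyset(\sigma)$ (supported on $\sigma$); the minimality hypothesis then rules out such terms. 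Once support on $\sigma$ is established, \cref{thm:all_coproducts} applied to $\Delta'_i$ restricted to the chain complex of $\sigma$ produces the required $U_\sigma \in \bruhat{\sigma}{i + 1}$.

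To finish, the uniqueness in \cref{thm:all_coproducts} forces $U_\sigma \cap \binom{\tau}{i + 2} = U_\tau$ whenever $\tau \subseteq \sigma$, so the local data glue unambiguously to a subset $U \subseteq \Sigma_{i + 1}$. To verify $U \in \bruhat{\Sigma}{i + 1}$, pick any $(i + 2)$-simplex $M \in \Sigma$: then $U \cap \binom{M}{i + 2} = U_M \in \bruhat{M}{i + 1}$, which for $\sz{M} = i + 3$ is precisely the condition that $U_M$ is a beginning or ending segment of $\packet{M}$ in the lexicographic order. Hence $U$ is $\Sigma$-consistent and $\Delta'_i = \Delta_i^U$ by construction. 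The main obstacle will be the support step: one must carefully track which subsets of the ambient vertex set actually belong to $\Sigma$ to ensure that the delicate cancellation argument of \cref{thm:all_coproducts} continues to work when some potentially obstructing terms are simply unavailable in $\chains_\bullet(\Sigma) \otimes \chains_\bullet(\Sigma)$.
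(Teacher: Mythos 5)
Your proposal follows essentially the same route as the paper: reduce both halves to \cref{thm:homotopy_formula} and \cref{thm:all_coproducts} applied simplex by simplex, using the compatibility $U_{\sigma \setminus p} = U_\sigma / p$ to make the gluing of local cubillages unambiguous, and then verify $\Sigma$-consistency from the constituent $U_\sigma$. The one place where you add genuine value is in explicitly isolating the support step — arguing that terms of $\Delta'_i(\sigma)$ must actually live in $\chains_\bullet(\sigma)\otimes\chains_\bullet(\sigma)$ before \cref{thm:all_coproducts} can be invoked — which the paper treats as implicit in the proof of \cref{thm:all_coproducts} rather than restating for the simplicial complex setting; your framing as an induction on $\dim\sigma$ is really an unfolding of the induction already inside the proof of \cref{thm:all_coproducts}, and your concern about which subsets belong to $\Sigma$ is legitimate but resolves the right way (fewer available terms only makes the forced cancellations tighter).
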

\begin{proof}
The homotopy formula follows from applying \cref{thm:homotopy_formula} simplex by simplex.
Indeed, using the notation for contraction from \cref{sect:back:hbo:cons}, we have that if $\tau \subseteq \sigma$, then $U_{\tau} = U_{\sigma}/(\sigma \setminus \tau)$.
Hence \cref{const:simp_comp} ensures that the terms from $(-1)^i \Delta^U_{i} \circ \partial$ cancel the terms from $\partial \circ \Delta^U_{i}$ correctly.

To see that any coproduct $\Delta'_{i}$ with a minimal number of terms arises in this way, we apply \cref{thm:all_coproducts} to the individual simplices of $\Sigma$.
Indeed, for a simplex $\sigma \in \Sigma$, by \cref{thm:all_coproducts}, we must have that there exists some some $U(\sigma) \in \bruhat{\sigma}{i + 1}$ such that $\Delta'_{i}(\tau) = \Delta_{i}^{U(\sigma)}(\tau)$ for all $\tau \subseteq \sigma$.
We then define $U := \bigcup_{\sigma \in \Sigma} U(\sigma)$.
We claim that $U(\sigma) = U_{\sigma} := U \cap \binom{\sigma}{i + 2}$.
Indeed, for any $\sigma, \sigma' \in \Sigma$ with $\sigma \cap \sigma' \neq \emptyset$, we have that $\Delta^{U(\sigma)}(\tau) = \Delta'_{i}(\tau) = \Delta^{U(\sigma')}(\tau)$ for all $\tau \subseteq \sigma \cap \sigma'$.
Since different cubillages would result in different coproducts, we conclude that
\[U(\sigma)/(\sigma' \setminus \sigma) = U(\sigma \cap \sigma') = U(\sigma')/(\sigma' \setminus \sigma).\]
Thus, we have $U(\sigma) \cap {\textstyle\binom{\sigma \cap \sigma'}{i + 1}} = U(\sigma') \cap {\textstyle\binom{\sigma \cap \sigma'}{i + 1}}$.
Hence, we have
\begin{align*}
U_{\sigma} &= U \cap {\textstyle\binom{\sigma}{i + 2}} \\
&= \bigcup_{\sigma' \in \Sigma} U(\sigma') \cap {\textstyle\binom{\sigma}{i + 2}} \\
&= \bigcup_{\sigma' \in \Sigma} U(\sigma') \cap {\textstyle\binom{\sigma \cap \sigma'}{i + 2}} \\
&= \bigcup_{\sigma' \in \Sigma} U(\sigma) \cap {\textstyle\binom{\sigma \cap \sigma'}{i + 2}} = U(\sigma).
\end{align*}
We conclude that $\Delta'_{i} = \Delta_{i}^{U}$, as for all $\sigma \in \Sigma$, we have $\Delta'_{i}(\sigma) = \Delta_{i}^{U(\sigma)}(\sigma) = \Delta_{i}^{U_{\sigma}}(\sigma) = \Delta_{i}^{U}(\sigma)$.
\end{proof}


\subsection{Singular homology}\label{sect:imp:sing}

Having studied simplicial complexes, it is natural to ask for singular homology as well: can one describe all coproducts $\Delta'_{i}$ which give homotopies between $\Delta_{i - 1}^\emptyset$ and $T\Delta_{i - 1}^\emptyset$?
Here, since there are infinitely many singular simplices~$\sigma \colon \simp^n \to X$, for $X$ a topological space, it is not feasible to define coproducts differently for each singular simplex.
Making this restriction, one obtains that the only possible coproducts are the Steenrod ones.

\begin{theorem}
	\label{thm:sing}
Let $X$ be a topological space, and let $\chains_{\bullet}(X)$ denote its singular chain complex.
Suppose that we have a coproduct $\Delta'_{i}\colon \chains_{\bullet}(X) \to \chains_{\bullet}(X) \otimes \chains_{\bullet}(X)$ such that the formula for $\Delta'_{i}(\sigma)$ does not depend upon the particular singular simplex $\sigma \colon \simp^n \to X$, but only on its dimension.
Suppose further that we have \[\partial \circ \Delta'_{i} - (-1)^i \Delta'_{i} \circ \partial
 = (1 + (-1)^i T)\Delta_{i - 1}^\emptyset \ ,\]
 that $\Delta'_{i}$ has the minimal number of terms among all the coproducts satisfying this equation, and if $i=0$, we have $\Delta'_{i}(\sigma) = \sigma \otimes \sigma$ for each $0$-chain $\sigma$.
Then for all $i\geqslant 0$, we have $\Delta'_{i} = \Delta_{i}^\emptyset$ or $(-1)^{i}T\Delta_{i}^\emptyset$. 
\end{theorem}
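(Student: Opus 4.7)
The plan is to reduce the question of coproducts on singular chains to coproducts on the standard simplex and invoke \cref{thm:all_coproducts}. For each $n$, the dimension-only hypothesis gives a universal element $c_n \in \chains_\bullet(\simp^n) \otimes \chains_\bullet(\simp^n)$ such that $\Delta'_i(\sigma) = (\sigma \otimes \sigma)(c_n)$ for any singular $n$-simplex $\sigma$. Treating each cellular basis element of $\simp^n$ as its canonical face inclusion (a singular simplex) yields a cellular coproduct $\Delta'_{i,n} \colon \chains_\bullet(\simp^n) \to \chains_\bullet(\simp^n) \otimes \chains_\bullet(\simp^n)$ with $\Delta'_{i,n}([0,n]) = c_n$, and applying the singular homotopy formula to $\id_{\simp^n}$ and its faces shows that $\Delta'_{i,n}$ satisfies the cellular homotopy formula; minimality transfers since reducing the number of terms in any $c_m$ would reduce them in $\Delta'_i(\sigma)$ for every singular $\sigma$ of dimension $m$. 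By \cref{thm:all_coproducts}, there is some $U_n \in \bruhat{[0,n]}{i+1}$ with $\Delta'_{i,n} = \Delta_i^{U_n}$.

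The next step is face compatibility. The $p$-th face inclusion $d_p\colon \simp^{n-1} \hookrightarrow \simp^n$, viewed as a singular $(n-1)$-simplex of $\simp^n$, gives on one hand
\[
\Delta'_i(d_p) = (d_p \otimes d_p)(c_{n-1}) = (d_p \otimes d_p)\bigl(\Delta_i^{U_{n-1}}([0,n-1])\bigr),
\]
while viewing $d_p$ as the basis element $[0,n]\setminus p$ of $\chains_\bullet(\simp^n)$ and applying \cref{const} gives
\[
\Delta'_i(d_p) = \Delta_i^{U_n}([0,n]\setminus p) = \Delta_i^{U_n/p}([0,n]\setminus p).
\]
Comparing these under the order-preserving bijection $[0,n]\setminus p \cong [0,n-1]$ induced by $d_p$, we conclude that $U_n/p$ corresponds to $U_{n-1}$ for every $p \in [0,n]$.

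The final step, which is the key combinatorial input, is to show by induction on $n$ that the only $U_n$ satisfying this compatibility are $\emptyset$ and $\binom{[0,n]}{i+2}$, with a choice that is consistent across $n$. The base cases $n \leqslant i+1$ are immediate, as $\bruhat{[0,n]}{i+1}$ has at most two elements. For $n > i+1$: if $U_{n-1} = \emptyset$, then $U_n/p = \emptyset$ for all $p$, and since every $K \in \binom{[0,n]}{i+2}$ has $|K| = i+2 < n+1$, there is some $p \in [0,n]\setminus K$, so $K \in \binom{[0,n]\setminus p}{i+2}$ and hence $K \notin U_n$; thus $U_n = \emptyset$. The case $U_{n-1} = \binom{[0,n-1]}{i+2}$ is symmetric, yielding $U_n = \binom{[0,n]}{i+2}$. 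Combining this with \cref{prop:complement}, which identifies $\Delta_i^{\binom{[0,n]}{i+2}}$ with $(-1)^i T\Delta_i^\emptyset$, yields $\Delta'_i \in \{\Delta_i^\emptyset,\, (-1)^i T\Delta_i^\emptyset\}$, as desired.

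I expect the main obstacle to be the careful transfer between the singular and cellular settings, particularly ensuring that minimality and the homotopy formula descend cleanly from $\Delta'_i$ to each $\Delta'_{i,n}$ so that \cref{thm:all_coproducts} genuinely applies. Once this reduction is made, the combinatorial induction is short and essentially captures the fact that only the extremal elements of the higher Bruhat order are invariant under all contractions.
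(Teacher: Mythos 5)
Your proof is correct and takes essentially the same route as the paper's: reduce via the dimension-only hypothesis to a family of cellular coproducts on the standard simplices, apply \cref{thm:all_coproducts} to obtain consistent sets $U_n$, use the face compatibility $U_n/p = U_{n-1}$ coming from universality, and finish with the combinatorial induction showing that only $\emptyset$ and $\binom{[0,n]}{i+2}$ are invariant under all contractions. You simply spell out more fully the singular-to-cellular transfer (and the explicit argument that a contraction-invariant $U_n$ must be extremal) that the paper's proof leaves implicit.
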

\begin{proof}
Suppose we have a coproduct $\Delta'_{i}$ as above.
Given a singular simplex $\sigma \colon \simp^{p} \to X$, we already have that $\Delta'_{i}(\sigma) = \Delta_{i}^U(\sigma)$ for some $U \in \bruhat{[0, p]}{i + 1}$ by Theorem~\ref{thm:all_coproducts}.
We prove by induction on $p$ that in fact $U = \emptyset$ or $U  = \binom{[0, p]}{i + 2}$.
For $p \leqslant i$, we must have $\Delta'_{i}(\sigma) = 0$.
For $p = i + 1$, there are no other options for $U$, so this case is also immediate.

Suppose then that the claim holds for $p - 1$, where $p > i + 1$, so that $\Delta'_{i}(\sigma) = \Delta_{i}^{U}(\sigma)$ for $U = 0$ or $U = \binom{[0, p - 1]}{i + 1}$ for all singular $(p - 1)$-simplices $\sigma \colon \simp^{p - 1} \to X$.
Consider a singular $p$-simplex $\sigma \colon \simp^{p} \to X$.
Then, by Theorem~\ref{thm:all_coproducts}, we must have that $\Delta'_{i}(\sigma) = \Delta_{i}^{U}(\sigma)$, for some $U \in \bruhat{[0, p]}{i + 1}$.
Consequently, we must have that $\Delta'_{i}(\sigma/q) = \Delta_{i}^{U/q}(\sigma/q)$, where $\sigma/q \colon \simp^{p - 1} \to X$ are the singular simplices given by taking the face of $\sigma \colon \simp^{p} \to X$ which does not contain~$q$.
But by the induction hypothesis and the assumption that $U/q$ cannot depend upon $q$, we therefore either have $U/q = \emptyset$ for all $q$ or $U/q = \binom{[0, p] \setminus q}{i + 2}$ for all $q$.
This implies that either $U = \emptyset$ or $U = \binom{[0, p]}{i + 2}$.
This then shows that $\Delta'_{i}(\sigma) = \Delta_{i}^\emptyset(\sigma)$ or~$\Delta'_{i}(\sigma) = (-1)^{i} T\Delta_{i}^\emptyset(\sigma)$, as desired.
\end{proof}


\subsection{Reoriented higher Bruhat orders}\label{sect:imp:reorient}

In this section, we address the question of whether one can find chain homotopies between $\Delta_{i}^U$ and $T\Delta_{i}^{U}$, instead of between $\Delta_{i}^{\emptyset}$ and $T\Delta_{i}^{\emptyset}$.
The relevant posets in this case are the ``reoriented higher Bruhat orders'' of S.~Felsner and H.~Weil \cite[Section~3]{fw00}, who attribute them originally to G.~M.~Ziegler \cite{z93}.

These posets are defined as follows.
Given $U, V \in \bruhat{[0, n]}{i + 1}$, the \defn{reoriented inversion set of $V$ with respect to $U$} is defined to be the symmetric difference $ \rinv{U}{V} := (U \setminus V) \cup (V \setminus U)$.
The \defn{reoriented higher Bruhat order with respect to~$U$} $\rhbo{U}{[0, n]}{i + 1}$ has the same underlying set as $\bruhat{[0, n]}{i + 1}$, only the covering relations are given by $V \lessdot V'$ for $\rinv{U}{V'} = \rinv{U}{V} \cup \{L\}$, $L \notin \rinv{U}{V}$.
Hence, $\rhbo{U}{[0, n]}{i + 1}$ measures inversions against $U$, whereas $\bruhat{[0, n]}{i + 1}$ measures inversions against $\emptyset$.

In much the same way as in Section~\ref{sect:main}, we can construct coproducts which give homotopies between $\Delta_{i}^U$ and $T\Delta_{i}^U$ from equivalence classes of maximal chains in~$\rhbo{U}{[0, n]}{i + 1}$.
However, now that we have reoriented the higher Bruhat orders, there is a subtlety.
Note that $U$ is always minimal in $\rhbo{U}{[0, n]}{i + 1}$ whilst~$\binom{[0, n]}{i + 2} \setminus U$ is always maximal.
But there may be other minimal and maximal elements in the poset $\rhbo{U}{[0, n]}{i + 1}$ \cite[Sec.~3]{fw00}, unlike for the ordinary higher Bruhat orders.
Hence, we must restrict to maximal chains in $\rhbo{U}{[0, n]}{i + 1}$ from $U$ to $\binom{[0, n]}{i + 2} \setminus U$ to obtain a chain homotopy.

\begin{construction}\label{const:reoriented}
Let $\mathcal{W}$ be a maximal chain of $\rhbo{U}{[0, n]}{i + 1}$ from $U$ to~$\binom{[0, n]}{i + 2}~\setminus~U$.
Let further $\mathcal{W}$ be given by the sequence of faces $(F_{1}, F_{2}, \dots, F_{p})$ of~$Z([0,n],n+1)$, with $(L_{1}, L_{2}, \dots, L_{p})$ the sequence of generating vectors of these faces, with $L_{q} \in \binom{[0, n]}{i + 2}$.
The elements of the maximal chain are then $V_{q} \in \bruhat{[0, n]}{i + 1}$ for $0 \leqslant q \leqslant p$, where $\rinv{U}{V_q} = \{L_{1}, L_{2}, \dots, L_{q}\}$ and $\rinv{U}{V_0} = \emptyset$.

Recalling from \cref{const:cov_rel} the definition of a coproduct $\Delta_{i}^{F_{q}}$ given by a single face $F_{q}$ of $Z([0, n], n + 1)$, we define \[\Delta_{i + 1}^{\mathcal{W}} := \sum_{L_{q} \notin U}\Delta_{i + 1}^{F_{q}} - \sum_{L_{r} \in U}\Delta_{i + 1}^{F_{r}} \ .\]
\end{construction}

These ``reoriented'' coproducts satisfy the homotopy formula as follows.

\begin{theorem}\label{thm:reoriented}
For any maximal chain $\mathcal{W}$ of $\rhbo{U}{[0, n]}{i + 1}$ from $U$ to~$\binom{[0, n]}{i + 2}~\setminus~U$, we have that \[\partial \circ \Delta_{i + 1}^\mathcal{W} - (-1)^{i + 1} \Delta_{i + 1}^\mathcal{W} \circ \partial = (1 + (-1)^{i + 1}T)\Delta_{i}^U \ .\]
\end{theorem}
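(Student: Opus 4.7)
The plan is to apply the local homotopy formula from \cref{thm:cov_rel} termwise to the definition of $\Delta_{i+1}^{\mathcal{W}}$ and show that the result telescopes along the maximal chain $\mathcal{W}$, yielding the two endpoint coproducts, which will then be related by \cref{prop:complement}.

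First I would analyse what each covering relation $V_{q-1} \lessdot V_{q}$ in $\rhbo{U}{[0,n]}{i+1}$ looks like when interpreted in the ordinary higher Bruhat order $\bruhat{[0,n]}{i+1}$. Since $\rinv{U}{V_{q}} = \rinv{U}{V_{q-1}} \cup \{L_{q}\}$, there are exactly two cases: if $L_{q} \notin U$, then $V_{q} = V_{q-1} \cup \{L_{q}\}$, so $V_{q-1} \lessdot V_{q}$ in $\bruhat{[0,n]}{i+1}$; if $L_{q} \in U$, then $V_{q} = V_{q-1} \setminus \{L_{q}\}$, so $V_{q} \lessdot V_{q-1}$ in $\bruhat{[0,n]}{i+1}$. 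Applying \cref{thm:cov_rel} (with $i$ there replaced by $i+1$) to each $F_{q}$ therefore gives
\begin{align*}
\partial \circ \Delta_{i+1}^{F_{q}} - (-1)^{i+1}\Delta_{i+1}^{F_{q}} \circ \partial &= \Delta_{i}^{V_{q-1}} - \Delta_{i}^{V_{q}} \quad \text{when } L_{q} \notin U,\\
\partial \circ \Delta_{i+1}^{F_{q}} - (-1)^{i+1}\Delta_{i+1}^{F_{q}} \circ \partial &= \Delta_{i}^{V_{q}} - \Delta_{i}^{V_{q-1}} \quad \text{when } L_{q} \in U.
\end{align*}

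Next I would observe that the signs in \cref{const:reoriented} are designed precisely so that these two cases combine uniformly. Summing over all $q$ with the signs $+1$ for $L_{q} \notin U$ and $-1$ for $L_{q} \in U$, every term contributes $\Delta_{i}^{V_{q-1}} - \Delta_{i}^{V_{q}}$, so the sum telescopes to
\[
\partial \circ \Delta_{i+1}^{\mathcal{W}} - (-1)^{i+1}\Delta_{i+1}^{\mathcal{W}} \circ \partial = \Delta_{i}^{V_{0}} - \Delta_{i}^{V_{p}} = \Delta_{i}^{U} - \Delta_{i}^{\binom{[0,n]}{i+2}\setminus U}.
\]
Finally, \cref{prop:complement} rewrites $\Delta_{i}^{\binom{[0,n]}{i+2}\setminus U} = (-1)^{i}T\Delta_{i}^{U}$, producing $(1 + (-1)^{i+1}T)\Delta_{i}^{U}$, as required.

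I do not expect any major obstacle: the substance is contained in \cref{thm:cov_rel}, and the role of the maximal chain hypothesis (going from $U$ to $\binom{[0,n]}{i+2}\setminus U$) is exactly to make the telescoping endpoints correct. The only point requiring care is bookkeeping of the two cases in the covering relations of the reoriented order, in particular checking that the explicit signs in \cref{const:reoriented} absorb the directional discrepancy between ``upward'' covers ($L_{q} \notin U$) and ``downward'' covers ($L_{q} \in U$) in the ordinary higher Bruhat order.
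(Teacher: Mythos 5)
Your proof is correct and follows essentially the same route as the paper's: apply the local formula from \cref{thm:cov_rel} (with index shifted to $i+1$) to each face of the chain, observe that the extra minus signs in \cref{const:reoriented} correct the direction reversal when $L_q \in U$, telescope, and finish with \cref{prop:complement}. The paper's own proof states this more tersely (by reference to the alternative proof of \cref{thm:homotopy_formula}), but the substance — including the case split on $L_q \in U$ versus $L_q \notin U$ and the role of the sign conventions — is identical to what you wrote.
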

\begin{proof}
This follows from \cref{thm:cov_rel} in the same way that \cref{thm:homotopy_formula} follows from \cref{thm:cov_rel}.
The difference is that when $L_{r} \in U$, the corresponding face of $Z([0, n], n + 1)$ must be traversed in the opposite direction to the direction it is traversed in $\bruhat{[0, n]}{i + 1}$, meaning that one adds the upper facets and subtracts the lower facets, rather than \emph{vice versa}.
This is because, in terms of $\bruhat{[0, n]}{i + 1}$, such subsets need to be removed from the inversion set, rather than added.
This difference is taken care of by the extra minus signs in the definition of $\Delta_{i + 1}^\mathcal{W}$.
\end{proof}

Of course, the next question is whether one can understand the coproducts which give homotopies between $\Delta_{i + 1}^\mathcal{W}$ and $(-1)^{i + 1}T\Delta_{i + 1}^\mathcal{W}$, but it is not clear how to extend our techniques to these cases.

We end this section by illustrating Construction~\ref{const:reoriented} with an example.

\begin{example}
As in Example~\ref{ex:main}, we consider the chain complex $\chains_{\bullet}(\simp^2)$ of the $2$-simplex.
We choose $U = \{01\} \in \bruhat{[0,2]}{1}$ to reorient the higher Bruhat orders.
Hence, we wish to find a homotopy between the cup-$0$ coproduct given by \[\Delta_{0}^U([0,2]) = 1 \otimes 012 + 01 \otimes 02 + 012 \otimes 2\] and its opposite, instead of between the normal cup-$0$ coproduct $\Delta_{0}^{\emptyset}$ and its opposite~$T\Delta_{0}^{\emptyset}$.
There are two maximal chains in $\rhbo{U}{[0,2]}{1}$, namely $\mathcal{W}_{1} = (01, 12, 02)$ and $\mathcal{W}_{2} = (02, 12, 01)$, denoted by the sequences of subsets $L_{q}$ that get added to the reoriented inversion set.
Using Construction~\ref{const:reoriented}, they give the following two coproducts.
\begin{align*}
\Delta_{1}^{\mathcal{W}_{1}} &= + 01 \otimes 012 + 012 \otimes 12 - 02 \otimes 012 \ , \\
\Delta_{1}^{\mathcal{W}_{2}} &= + 012 \otimes 02 - 12 \otimes 012 - 012 \otimes 01 \ .
\end{align*}
Comparing the signs of the terms to those of Example~\ref{ex:main}, one sees that they coincide except for $01 \otimes 012$ and $012 \otimes 01$.
These are the two terms with $L = 01$, in our usual notation, which is the element of $U$.
It is routine to verify that $\Delta_{1}^{\mathcal{W}_{1}}$ and $\Delta_{1}^{\mathcal{W}_{2}}$ give homotopies from $T\Delta_{0}^U$ to $\Delta_{0}^U$.
\end{example}


\subsection{Steenrod squares}
\label{sect:squares}

Let $\Sigma$ be a finite simplicial complex.
For $i\geqslant 0$ and $U \in \bruhat{\Sigma}{i + 1}$, we denote by $\Delta_i^{U} \colon \chains_{\bullet}(\Sigma) \to \chains_{\bullet}(\Sigma) \otimes \chains_{\bullet}(\Sigma)$ the associated coproduct from \cref{const:simp_comp}.
We will consider here \defn{cochains} on $\Sigma$, which are groups 
\[
	\chains^{p}(\Sigma):= \Hom_{\mathbb{Z}}(\chains_{p}(\Sigma),\Z).
\]
Endowed with a \defn{codifferential}, a degree-$1$ map $\delta \colon \chains^{p}(\Sigma) \to \chains^{p+1}(\Sigma)$ defined by $\delta(u)(c):=u(\partial c)$, which satisfies $\delta^2=0$, they form a \defn{cochain complex} $\chains^\bullet(\Sigma)$. 

\begin{definition} \label{def:cup-i-product}
	For $i\geqslant 0$, and $U \in \bruhat{\Sigma}{i + 1}$, we define a \defn{cup-$i$ product}
\begin{align*}
\smile_i^U \colon \chains^{p}(\Sigma) \otimes \chains^{q}(\Sigma) & \rightarrow \chains^{p+q-i}(\Sigma) \\
u \otimes v &\mapsto u \smile_i^U v
\end{align*}
by the formula \[ (u \smile_i^U v) (c) := (u \otimes v)\Delta_i^U(c)\ . \]
\end{definition}

Since the coproduct $\Delta_i^U$ is a linear map, the product $\smile_i^U$ is also linear.

\begin{proposition}
	\label{prop:coboundary-formula}
	For two cochains $u \in \chains^{p}(\Sigma)$, $v \in \chains^{q}(\Sigma)$, the cup-$i$ product $\smile_i^U$ satisfies the formula
	\[
	\delta(u \smile_i^U v)
	=(-1)^i \delta u \smile_i^U v 
	+ (-1)^{i+p} u \smile_i^U \delta v 
	- (-1)^i u \smile_{i-1}^\emptyset v 
	- (-1)^{pq} v \smile_{i-1}^\emptyset u\ .
	\]
\end{proposition}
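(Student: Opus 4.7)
The plan is to unpack the definition and apply the homotopy formula of \cref{thm:simp_comp} (the simplicial-complex extension of \cref{thm:homotopy_formula}). Starting from
\[
\delta(u \smile_i^U v)(c) \;=\; (u \smile_i^U v)(\partial c) \;=\; (u \otimes v)\,\Delta_i^U(\partial c),
\]
I would rearrange the homotopy formula to write
\[
\Delta_i^U \circ \partial \;=\; (-1)^i\, \partial \circ \Delta_i^U \;-\; (-1)^i\, \Delta_{i-1}^\emptyset \;-\; T\,\Delta_{i-1}^\emptyset,
\]
and then evaluate $u \otimes v$ on each of the three terms separately. The second term immediately contributes $-(-1)^i(u \smile_{i-1}^\emptyset v)(c)$, matching one summand on the right-hand side.

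For the first term, I would apply the graded Leibniz rule $\partial(x \otimes y) = \partial x \otimes y + (-1)^{\dg{x}} x \otimes \partial y$ to each summand of $\Delta_i^U(c)$. Pairing with $u \otimes v$ yields $u(\partial x)\,v(y) + (-1)^{\dg{x}} u(x)\,v(\partial y)$; since $u$ is homogeneous of degree $p$, the only summands surviving are those with $\dg{x}=p$ in the second piece, and using $u(\partial x) = \delta u(x)$ and $v(\partial y) = \delta v(y)$ one obtains
\[
(-1)^i (\delta u \smile_i^U v)(c) + (-1)^{i+p}(u \smile_i^U \delta v)(c).
\]

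For the third term I would use $T(x \otimes y) = (-1)^{\dg{x}\dg{y}} y \otimes x$. Evaluation of $u \otimes v$ on this is nonzero only when $\dg{y}=p$ and $\dg{x}=q$, so the sign becomes $(-1)^{pq}$ and the pairing is $(v \otimes u)(x \otimes y)$, yielding the contribution $-(-1)^{pq}(v \smile_{i-1}^\emptyset u)(c)$. Summing all contributions gives exactly the claimed identity.

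The proof is essentially a bookkeeping exercise once the homotopy formula is in hand: no real obstacle arises, since the Leibniz rule and the Koszul sign for $T$ are standard, and the homogeneity of $u$ and $v$ makes the exponents collapse to the clean form displayed. The only care needed is in transcribing the signs $(-1)^i$, $(-1)^{i+p}$, $(-1)^i$, $(-1)^{pq}$ correctly in the final assembly.
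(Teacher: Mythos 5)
Your proof is correct and follows essentially the same route as the paper: both apply $u \otimes v$ to the homotopy formula of \cref{thm:simp_comp}, use the Leibniz/codifferential rule to rewrite $(u\otimes v)\partial\Delta_i^U(c)$, and use the Koszul sign of $T$ to convert the $T\Delta_{i-1}^\emptyset$ term into $(-1)^{pq}(v\smile_{i-1}^\emptyset u)(c)$. The only cosmetic difference is that you solve the homotopy formula for $\Delta_i^U\circ\partial$ before pairing, whereas the paper pairs first and rearranges at the end; the bookkeeping is identical.
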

\begin{proof}
	Let $c$ be a chain of $\chains_{p+q-i+1}(\Sigma)$. 
	Using \cref{thm:simp_comp} we have
	\begin{align}
			(u \otimes v)\partial \Delta^U_{i}(c) - (-1)^i (u \otimes v)\Delta^U_{i}( \partial c) 
		&= (u \otimes v)\Delta_{i - 1}^\emptyset(c) +(-1)^i (u \otimes v)T\Delta_{i - 1}^\emptyset(c) \nonumber \\
		&= (u \otimes v)\Delta_{i - 1}^\emptyset(c) +(-1)^{i+pq} (v \otimes u)\Delta_{i - 1}^\emptyset(c) \nonumber\\
		&= (u \smile_{i - 1}^\emptyset v)(c)+(-1)^{i+pq}(v \smile_{i - 1}^\emptyset u)(c)\ . \label{eq:s1}
	\end{align}
	By definition, the two terms on the left-hand side are 
	\begin{align}
		(u \otimes v)\partial \Delta_i^U (c)
		&=[(\delta u \otimes v) + (-1)^{p}(u \otimes \delta v)]\Delta_i^U (c) \nonumber\\
		&=(\delta u \smile_i^U v + (-1)^{p} u \smile_i^U \delta v)(c)\ , \label{eq:s3}\\ 
		(u \otimes v)\Delta_i^U (\partial c)
		&=(u \smile_i^U v)(\partial c)
		=(\delta(u \smile_i^U v))(c)\ . \label{eq:s2}
	\end{align}
Substituting (\ref{eq:s2}) and (\ref{eq:s3}) into (\ref{eq:s1}), sending the term $\delta(u \smile_i^U v)$ to the right-hand side, and all the other terms to the left-hand side, and multiplying both sides by $(-1)^{i}$, we get the desired formula.
\end{proof}

So far, we have worked over $\mathbb{Z}$, with the cup-$i$ products defined on integral cochains. 
From now on, we will work over $\ZZ$.
We denote by $\chains_\bullet(\Sigma;\ZZ)$ the free $\ZZ$-module generated by the simplices of $\Sigma$, endowed with the differential $\partial(\{v_0,\ldots,v_q\}):=\sum_{p=0}^{q}\{v_0,\ldots,\hat v_p,\ldots,v_q\}$.
We denote its dual cochain complex by $\chains^{\bullet}(\Sigma;\ZZ):=\Hom_{\mathbb{Z}}(\chains_\bullet(\Sigma;\ZZ),\ZZ)$, with codifferential $\delta(u)(c):=u(\partial c)$.

\begin{lemma}\label{lem:cocycle_coboundary}
	Let $u \in \chains^{p}(\Sigma;\ZZ)$ be a cochain.
\begin{enumerate}
\item If $u$ is a cocycle modulo $2$, then $u \smile_i^U u$ is also a cocycle modulo $2$.\label{op:cocycle}
\item If $u$ is a coboundary modulo $2$, then $u \smile_i^U u$ is also a coboundary modulo~$2$.\label{op:coboundary}
\end{enumerate}
\end{lemma}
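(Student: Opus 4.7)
The plan is to deduce both statements from the coboundary formula of \cref{prop:coboundary-formula} by reducing it modulo $2$.

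First, I would specialise the formula to the case $u = v$ (with $p = q$). Reducing modulo $2$, all signs become trivial, and the two mixed terms $u \smile_{i-1}^{\emptyset} u$ appearing with opposite signs collapse into $2\,(u \smile_{i-1}^{\emptyset} u) \equiv 0 \pmod{2}$. The formula thus simplifies to
\[
\delta(u \smile_i^U u) \equiv \delta u \smile_i^U u + u \smile_i^U \delta u \pmod{2}\ .
\]
Part (\ref{op:cocycle}) then follows immediately: if $\delta u \equiv 0 \pmod{2}$, the right-hand side vanishes modulo $2$, so $u \smile_i^U u$ is a mod-$2$ cocycle.

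For part (\ref{op:coboundary}), write $u \equiv \delta w \pmod{2}$ for some $w \in \chains^{p-1}(\Sigma;\ZZ)$; since $u \smile_i^U u \equiv \delta w \smile_i^U \delta w \pmod{2}$ (cross terms carry factors of $2$), it suffices to exhibit $\delta w \smile_i^U \delta w$ as a coboundary modulo $2$. I would apply \cref{prop:coboundary-formula} to the pair $(w, \delta w)$ and reduce modulo $2$, using $\delta \delta w = 0$, to obtain
\[
\delta w \smile_i^U \delta w \equiv \delta(w \smile_i^U \delta w) + w \smile_{i-1}^{\emptyset} \delta w + \delta w \smile_{i-1}^{\emptyset} w \pmod{2}\ .
\]
The last two terms are still problematic, so I would apply the reduced formula (with $\smile_{i-1}^{\emptyset}$ in place of $\smile_i^U$) to $w \smile_{i-1}^{\emptyset} w$, which gives
\[
\delta(w \smile_{i-1}^{\emptyset} w) \equiv \delta w \smile_{i-1}^{\emptyset} w + w \smile_{i-1}^{\emptyset} \delta w \pmod{2}\ .
\]
Combining the two displays yields $u \smile_i^U u \equiv \delta\bigl(w \smile_i^U \delta w + w \smile_{i-1}^{\emptyset} w\bigr) \pmod{2}$, which is the desired conclusion.

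The argument is not obstructed by any substantial difficulty once the coboundary formula is in hand; the only point requiring care is the double invocation of the formula in (\ref{op:coboundary})---first for $\smile_i^U$ to produce the square $\delta w \smile_i^U \delta w$, and then for $\smile_{i-1}^{\emptyset}$ to absorb the leftover symmetrised terms---together with the book-keeping needed to verify that all signs indeed vanish modulo $2$.
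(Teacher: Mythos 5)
Your proposal is correct and essentially reproduces the paper's argument: for Part (1) you reduce the coboundary formula with $u=v$ modulo $2$, and for Part (2) you apply the formula once to the pair $(w,\delta w)$ with $\smile_i^U$ and once to the pair $(w,w)$ with $\smile_{i-1}^{\emptyset}$, arriving at the same cobounding element $w \smile_i^U \delta w + w \smile_{i-1}^{\emptyset} w$ as the paper. (One small imprecision: the two terms $u\smile_{i-1}^{\emptyset}u$ in Part (1) need not appear with opposite signs in the integral formula—this depends on the parities of $i$ and $p$—but since all signs vanish modulo $2$ the conclusion is unaffected.)
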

\begin{proof}
Suppose that $\delta u=2v$, for some $v \in \chains^{p+1}(\Sigma;\ZZ)$. 
By \cref{prop:coboundary-formula}, we have
\[
	\delta(u \smile_i^U u)
	= 2(v \smile_i^U u + u \smile_i^U v
	+ u \smile_{i-1}^\emptyset u)\ ,
\]
which proves Point \eqref{op:cocycle}.
Suppose now that $u=\delta w$ for some $w \in \chains^{p-1}(\Sigma;\ZZ)$.
By \cref{prop:coboundary-formula}, we have 
\begin{align*}
	\delta(w \smile_{i}^U \delta w + w \smile_{i-1}^\emptyset w)
	&=
	\delta w \smile_{i}^U \delta w + w \smile_{i}^U \delta^2 w + w \smile_{i-1}^\emptyset \delta w + \delta w \smile_{i-1}^\emptyset w \\
	&\quad + \delta w \smile_{i-1}^\emptyset w + w \smile_{i-1}^\emptyset \delta w + 2w \smile_{i-2}^\emptyset w \\
	&= \delta w \smile_{i}^U \delta w = u \smile_{i}^U u
\end{align*}
over $\ZZ$, which proves Point \eqref{op:coboundary}.
\end{proof}

\cref{lem:cocycle_coboundary} thus allows us to define the function 
\begin{align*}
		\Sq_i^U \colon  H^p(\Sigma;\ZZ) & \rightarrow  H^{2p-i}(\Sigma;\ZZ) \\
		 [u] &\mapsto [u \smile_i^U u]\ ,
\end{align*}
where $[u]$ denotes the cohomology class of a cocycle $u$.

\begin{proposition} 
\label{prop:group-homomorphism}
	Let $U \in \bruhat{\Sigma}{i+1}$ be such that $U = U' \cap \Sigma_{i + 1}$ for some $U' \in \bruhat{[0, n]}{i + 1}$.
	Then, $\Sq_i^U$ is a group homomorphism. 
\end{proposition}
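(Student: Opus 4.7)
The plan is to reduce the additivity of $\Sq_i^U$ to the standard Steenrod-style claim that, for any pair of cocycles $u, v \in \chains^p(\Sigma;\ZZ)$, the cross term $u \smile_i^U v + v \smile_i^U u$ is a coboundary modulo $2$. Indeed, the bilinearity of $\smile_i^U$ gives
\[(u+v) \smile_i^U (u+v) = u \smile_i^U u + v \smile_i^U v + u \smile_i^U v + v \smile_i^U u,\]
so once the last two terms are identified with a coboundary mod $2$, one concludes $\Sq_i^U([u+v]) = \Sq_i^U([u]) + \Sq_i^U([v])$ in $H^{2p-i}(\Sigma;\ZZ)$.

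To produce the required coboundary, I would invoke the reoriented construction of \cref{sect:imp:reorient}. Fix a maximal chain $\mathcal{W}$ in $\rhbo{U'}{[0, n]}{i+1}$ from $U'$ to $\binom{[0, n]}{i+2} \setminus U'$; such a chain exists since these are respectively the distinguished minimum and maximum of the reoriented order. \cref{thm:reoriented} then gives the coproduct $\Delta_{i+1}^\mathcal{W}$ on $\chains_\bullet(\simp^n)$ satisfying
\[\partial \circ \Delta_{i+1}^\mathcal{W} - (-1)^{i+1}\Delta_{i+1}^\mathcal{W} \circ \partial = (1 + (-1)^{i+1}T)\Delta_i^{U'}.\]
Following the pattern of \cref{const:simp_comp} and \cref{thm:simp_comp}, and exploiting the hypothesis $U = U' \cap \Sigma_{i+1}$, I would transport $\Delta_{i+1}^\mathcal{W}$ to a coproduct on $\chains_\bullet(\Sigma)$ by restricting to each simplex $\sigma \in \Sigma$, obtaining the analogous homotopy formula on $\Sigma$ with $\Delta_i^U$ on the right-hand side.

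Defining a cup-$(i+1)$ product $\smile_{i+1}^\mathcal{W}$ by the analogue of \cref{def:cup-i-product} and dualising this homotopy formula exactly as in \cref{prop:coboundary-formula}, I obtain, for cocycles $u, v \in \chains^p(\Sigma;\ZZ)$,
\[\delta(u \smile_{i+1}^\mathcal{W} v) \equiv u \smile_i^U v + v \smile_i^U u \pmod 2,\]
since over $\ZZ$ all signs collapse and the terms involving $\delta u$ and $\delta v$ vanish. This exhibits the cross term as a coboundary mod $2$ and completes the argument.

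The hard part will not be the dualisation, which is a direct analogue of \cref{prop:coboundary-formula} and \cref{lem:cocycle_coboundary}, but rather the simplex-by-simplex extension step: I need a \emph{single} chain $\mathcal{W}$ on $[0, n]$ whose restrictions to the various simplices of $\Sigma$ glue into a well-defined coproduct on $\chains_\bullet(\Sigma)$. This is precisely what the hypothesis $U = U' \cap \Sigma_{i+1}$ is designed for: it forces the cup-$i$ data on $\Sigma$ to come from a single global element of $\bruhat{[0, n]}{i+1}$, so the construction of the cup-$(i+1)$ homotopy can also be done globally and then restricted, mirroring how \cref{const:simp_comp} already treats $\Delta_i^U$ itself.
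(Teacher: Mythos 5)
Your proposal follows the paper's proof almost step for step: reduce additivity to showing the cross term $u \smile_i^U v + v \smile_i^U u$ is a coboundary, fix a maximal chain $\mathcal{W}$ in $\rhbo{U'}{[0, n]}{i+1}$ from $U'$ to $\binom{[0,n]}{i+2} \setminus U'$, restrict $\mathcal{W}$ simplex-by-simplex to get a well-defined $\Delta_{i+1}^{\mathcal{W}}$ on $\chains_\bullet(\Sigma;\ZZ)$, and dualise via the coboundary formula over $\ZZ$. You also correctly identify that the hypothesis $U = U' \cap \Sigma_{i+1}$ is precisely what makes the simplex-by-simplex restriction of $\mathcal{W}$ coherent, which is the one non-routine point the paper handles explicitly.
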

\begin{proof}
As in \cref{sect:imp:reorient}, let $\mathcal{W}$ be a maximal chain in $\rhbo{U'}{[0, n]}{i + 1}$ from $U'$ to $\binom{[0, n]}{i + 2} \setminus U'$.
Let $(L_{1}, L_{2}, \dots, L_{\binom{n + 1}{i + 2}})$ be the sequence of $(i + 2)$-subsets of $[0, n]$ added to the reoriented inversion sets by~$\mathcal{W}$.
Given a simplex $\sigma \in \Sigma$, one can define $\mathcal{W}_{\sigma}$ to be the maximal chain in $\rhbo{U_{\sigma}}{\sigma}{i + 1}$ given by the subsequence of $(L_{1}, L_{2}, \dots, L_{\binom{n + 1}{i + 2}})$ contained in $\sigma$.
We then define a linear map $\Delta_{i + 1}^{\mathcal{W}} \colon \chains_{\bullet}(\Sigma;\ZZ) \to \chains_{\bullet}(\Sigma;\ZZ) \otimes \chains_{\bullet}(\Sigma;\ZZ)$ by the formula $\Delta_{i + 1}^{\mathcal{W}} := \Delta_{i + 1}^{\mathcal{W}_{\sigma}}(\sigma)$.
Combining \cref{thm:simp_comp} and \cref{thm:reoriented}, we have that $\Delta_{i + 1}^{\mathcal{W}}$ satisfies the homotopy formula with respect to $\Delta_i^U$.

Then, we define $\smile_{i + 1}^{\mathcal{W}}$ as in \cref{def:cup-i-product}, and run the proof of \cref{prop:coboundary-formula} again over $\ZZ$, showing that $\smile_{i + 1}^{\mathcal{W}}$ satisfies the formula of \cref{prop:coboundary-formula} with respect to $\smile_i^U$.
For $u$ and $v$ two cocycles in $\chains^{\bullet}(\Sigma;\ZZ)$, we obtain
\begin{align*}
		\delta (u \smile_{i+1}^{\mathcal{W}} v) &= 
		\delta u \smile_{i+1}^{\mathcal{W}} v + 
		u \smile_{i+1}^{\mathcal{W}} \delta v + u \smile_{i}^{U} v + v \smile_{i}^{U} u\\ 
		&= u \smile_i^U v + v \smile_i^U u\ .
\end{align*}
Thus, we have
\begin{align*}
		(u + v) \smile_i^U (u + v) &= u \smile_i^U u + v \smile_i^U v + u \smile_i^U v + v \smile_i^U u \\ 
		&= u \smile_i^U u + v \smile_i^U v + \delta(u \smile_{i+1}^{\mathcal{W}} v)\ ,
\end{align*}
and therefore in cohomology $\Sq_i^U(u+v)=\Sq_i^U(u)+\Sq_i^U(v)$, as desired.
\end{proof}

\begin{theorem}
	\label{thm:Steenrod-squares}
Suppose that $U, V \in \bruhat{\Sigma}{i + 1}$ are such that $U = U' \cap \Sigma_{i + 1}$ and $V = V' \cap \Sigma_{i + 1}$ for some $U', V' \in \bruhat{[0, n]}{i + 1}$.
Then, we have $\Sq_i^U = \Sq_i^V$. 
\end{theorem}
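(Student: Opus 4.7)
The plan is to construct a chain homotopy between the coproducts $\Delta_i^U$ and $\Delta_i^V$ on $\chains_\bullet(\Sigma;\ZZ)$, and then argue, as in the proof of \cref{prop:group-homomorphism}, that this forces the corresponding Steenrod squares to agree. First, since $\bruhat{[0, n]}{i + 1}$ possesses a unique minimum $\emptyset$ and unique maximum $\binom{[0, n]}{i + 2}$, any two of its elements are joined by a zigzag of covering relations. Chaining homotopies along such a zigzag reduces the problem to the case in which $U'$ and $V'$ are related by a single covering relation, which without loss of generality we may take to be $U' \lessdot V'$ in $\bruhat{[0, n]}{i + 1}$. This covering relation corresponds to an $(i + 2)$-face $F$ of $Z([0, n], n + 1)$ with generating vectors $L \in \binom{[0, n]}{i + 2}$. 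If $L \notin \Sigma_{i + 1}$, then $U = V$ and there is nothing to prove, so I assume $L \in \Sigma_{i + 1}$.

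Next I would build the homotopy simplex by simplex, in the spirit of \cref{const:simp_comp}. For each $\sigma \in \Sigma$ containing $L$, the restriction of the covering relation to $\sigma$ yields a covering $U_\sigma \lessdot V_\sigma$ in $\bruhat{\sigma}{i + 1}$ corresponding to a face $F_\sigma$ of $Z(\sigma, |\sigma|)$ with generating vectors $L$, whose initial vertex is determined by $U_\sigma$ via \cref{prop:init_vert}. I would define a degree-$(i + 1)$ linear map
\[
H \colon \chains_\bullet(\Sigma;\ZZ) \to \chains_\bullet(\Sigma;\ZZ) \otimes \chains_\bullet(\Sigma;\ZZ)
\]
by $H(\sigma) := \Delta_{i + 1}^{F_\sigma}(\sigma)$ when $L \subseteq \sigma$ and $H(\sigma) := 0$ otherwise. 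The key compatibility to verify is that for any face $\tau \subseteq \sigma$ containing $L$, the initial vertex of $F_\tau$ equals the intersection of the initial vertex of $F_\sigma$ with $\tau$; this follows from \cref{prop:init_vert} together with the observation that the even/odd gap condition for a vertex $a$ with respect to $L$ depends only on $L$, not on the ambient set. With this compatibility in hand, a simplex-wise application of \cref{thm:cov_rel} yields
\[
\partial \circ H + H \circ \partial = \Delta_i^U + \Delta_i^V
\]
over $\ZZ$, where all signs disappear modulo 2.

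Once $H$ is constructed, the conclusion follows by the same diagonal argument as in \cref{prop:group-homomorphism}. For a cocycle $u \in \chains^p(\Sigma;\ZZ)$ modulo 2, the composition $(u \otimes u) \circ \partial$ vanishes modulo 2 since each summand of $\partial(x \otimes y) = \partial x \otimes y + x \otimes \partial y$ contributes a factor of $\delta u$. Consequently
\[
(u \smile_i^U u - u \smile_i^V u)(c) = (u \otimes u)(\partial H(c) + H(\partial c)) = (u \otimes u)(H(\partial c)) = \delta w(c),
\]
where $w := (u \otimes u) \circ H \in \chains^{2p - i - 1}(\Sigma;\ZZ)$. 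Hence $u \smile_i^U u$ and $u \smile_i^V u$ are cohomologous modulo 2, yielding $\Sq_i^U([u]) = \Sq_i^V([u])$.

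The main obstacle will be the careful verification that $H$ satisfies the homotopy formula globally on $\chains_\bullet(\Sigma;\ZZ)$, in particular the compatibility of the initial vertices of $F_\tau$ and $F_\sigma$ under restriction, so that $H(\partial \sigma)$ assembles correctly into $\Delta_{i + 1}^{F_\sigma}(\partial \sigma)$ whenever $L \subseteq \sigma$. Everything else is a routine adaptation of earlier results.
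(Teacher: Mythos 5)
Your proof is correct and takes essentially the same approach as the paper's: reduce to a single covering relation $U' \lessdot V'$, use the single-face coproduct $\Delta_{i+1}^F$ from \cref{const:cov_rel} as a chain homotopy between $\Delta_i^U$ and $\Delta_i^V$ via \cref{thm:cov_rel}, and then dualise. You spell out two steps that the paper leaves terse but implicit: the simplex-by-simplex compatibility check that the initial vertex of $F_\sigma$ is the restriction of that of $F$ (which justifies assembling the local homotopies into a global one on $\chains_\bullet(\Sigma;\ZZ)$), and the diagonal argument showing $u \smile_i^U u - u \smile_i^V u = \delta\bigl((u\otimes u)\circ H\bigr)$ for a cocycle $u$, which is what the paper means by ``gives rise to a cochain homotopy\dots hence the same map in cohomology''.
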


\begin{proof}
	It suffices to show this in the case where $U' \lessdot V'$ in $\bruhat{[0, n]}{i + 1}$.
	Suppose that, as in \cref{const:cov_rel}, this covering relation is given by a face $F$ of $Z([0, n], n + 1)$ with generating vectors $L$.
	If $L \notin \Sigma$, then $U = V$, so we can assume $L \in \Sigma$.
	Defining $\Delta_{i + 1}^{F}$ as in \cref{const:cov_rel} and applying \cref{thm:cov_rel}, we obtain a chain homotopy between $\Delta_{i}^{U}$ and $\Delta_{i}^{V}$.
	Indeed, note that for $\sigma \in \Sigma$ with $L \not\subseteq \sigma$, we will have $\Delta_{i}^{U}(\sigma) = \Delta_{i}^{V}(\sigma)$ and $\Delta_{i + 1}^{F}(\sigma) = 0$; for $L \subseteq \sigma$, we will have $\partial \circ \Delta_{i + 1}^{F}(\sigma) - (-1)^{i}\Delta_{i + 1}^{F} \circ \partial(\sigma) = \Delta_{i}^{U}(\sigma) - \Delta_{i}^{V}(\sigma)$ by \cref{thm:cov_rel}.
	This, in turn, gives rise to a cochain homotopy between the products $\smile_i^U$ and~$\smile_i^V$. 
	Since homotopic maps induce the same map in cohomology, we thus have $\Sq_i^U = \Sq_i^V$.
\end{proof}


\printbibliography


\appendix

\section{Sign calculations}\label{sect:appendix}

In this appendix we carry out the calculations on signs which are used in the paper, most notably in Proposition~\ref{prop:key}, but also in Proposition~\ref{prop:complement} and Lemma~\ref{lem:boundary=steenrod}.
Throughout this section, we use our convention of taking $\trmsgn(L \cup A \otimes L \cup B)$ as an element of $\mathbb{Z}/2\mathbb{Z}$ to simplify calculations.
We begin by examining how the sign changes when the halves of the tensor are swapped.

\begin{lemma}
	\label{lem:sign_swap}
Given $L, A, B \subseteq [0,n]$ which are disjoint and such that $L \cup A \cup B = [0, n]$, we have that
\begin{align*}
&\trmsgn(L \cup B \otimes L \cup A) \\
&\quad = \trmsgn(L \cup A \otimes L \cup B) + (\sz{L \cup A} + 1)(\sz{L \cup B} + 1) + \sz{L} + 1 \ .
\end{align*}
\end{lemma}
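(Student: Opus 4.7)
The plan is to expand both sides of the claimed identity using the explicit definition
\[\trmsgn(L \cup A \otimes L \cup B) = \sum_{b \in B} \sz{A}_{<b} + \sum_{l \in L} \sz{L}_{<l} + (n+1)\sz{A}\]
from \cref{const}, and verify the equality directly in $\mathbb{Z}/2\mathbb{Z}$. The middle term depends only on $L$, so it contributes nothing to the difference $\trmsgn(L \cup B \otimes L \cup A) - \trmsgn(L \cup A \otimes L \cup B)$. Hence the task reduces to showing
\[\sum_{a \in A} \sz{B}_{<a} + \sum_{b \in B} \sz{A}_{<b} + (n+1)(\sz{A} + \sz{B}) \equiv (\sz{L \cup A} + 1)(\sz{L \cup B} + 1) + \sz{L} + 1 \pmod 2.\]

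The key observation is combinatorial: because $A$ and $B$ are disjoint, every pair $(a,b) \in A \times B$ satisfies exactly one of $a < b$ or $b < a$, so
\[\sum_{a \in A} \sz{B}_{<a} + \sum_{b \in B} \sz{A}_{<b} = \sz{A}\sz{B}.\]
This immediately collapses the left-hand side of the congruence to $\sz{A}\sz{B} + (n+1)(\sz{A} + \sz{B})$ modulo $2$.

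The remaining step is algebraic: substitute $\sz{A} + \sz{B} = (n+1) - \sz{L}$ into the left-hand side, and expand $(\sz{L} + \sz{A} + 1)(\sz{L} + \sz{B} + 1) + \sz{L} + 1$ on the right-hand side, using $x^2 \equiv x \pmod 2$ to simplify squares. After cancellation, both sides reduce to $\sz{A}\sz{B} + \sz{L}(\sz{A} + \sz{B}) + \sz{A} + \sz{B}$ modulo $2$, establishing the lemma.

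I do not expect any real obstacle: once one notices that the $L$-only term drops out and that the two summations complementary in $A \times B$ add to $\sz{A}\sz{B}$, the rest is a short parity calculation. The only point requiring care is the bookkeeping between $n + 1$, $\sz{L}$, $\sz{A}$, and $\sz{B}$, which is handled by the identity $\sz{A} + \sz{B} + \sz{L} = n+1$.
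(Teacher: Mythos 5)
Your proposal is correct and takes essentially the same approach as the paper: both use the observation that $\sum_{a \in A}\sz{B}_{<a} + \sum_{b \in B}\sz{A}_{<b} = \sz{A}\sz{B}$ (the paper states it as $\sum_{a \in A}\sz{B}_{<a} = \sum_{b \in B}\sz{A}_{<b} + \sz{A}\sz{B}$, which is equivalent in $\ZZ$), followed by parity algebra with $n+1 = \sz{L}+\sz{A}+\sz{B}$ and $x^2 \equiv x$. The only difference is presentational: the paper manipulates $(n+1)\sz{B}$ directly into the target form, while you simplify both sides to a common normal form.
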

\begin{proof}
Our starting point is that
\begin{equation}\label{eq:sign:sign_swap:start}
\trmsgn(L \cup B \otimes L \cup A) = \sum_{a \in A}\sz{B}_{<a} + \sum_{l \in L}\sz{L}_{<l} + (n + 1)\sz{B}.
\end{equation}
We then make the following calculations, in which we use the fact that $n + 1 = \sz{L \cup A \cup B}$ and $x = x^2$ in~$\mathbb{Z}/2\mathbb{Z}$.
\begin{align*}
(n + 1)\sz{B} &= \sz{L \cup A \cup B}\sz{A} + \sz{L \cup A \cup B}\sz{B} + (n + 1)\sz{A} \\
&= \sz{L}\sz{A} + \sz{A} + \sz{B}\sz{A} + \sz{L}\sz{B} + \sz{A}\sz{B} + \sz{B} + (n + 1)\sz{A} \\
&= \sz{L \cup A}\sz{L \cup B} + \sz{A} + \sz{B} + \sz{L} + \sz{A}\sz{B} + (n + 1)\sz{A} \\
&= \sz{L \cup A}\sz{L \cup B} + \sz{L \cup A} + \sz{L \cup B} + \sz{L} + \sz{A}\sz{B} + (n + 1)\sz{A} \\
&= (\sz{L \cup A} + 1)(\sz{L \cup B} + 1) + \sz{L} + 1 + \sz{A}\sz{B} + (n + 1)\sz{A} \ .
\end{align*}
From this and \eqref{eq:sign:sign_swap:start}, noting that
\[\sum_{a \in A}\sz{B}_{<a} = \sum_{b \in B}\sz{A}_{>b} = \sum_{b \in B}(\sz{A}_{<b} + \sz{A}) = \sum_{b \in B}\sz{A}_{<b} + \sz{A}\sz{B} \ ,\]
we obtain that
\begin{align*}
\trmsgn(L \cup B \otimes L \cup A) &= \sum_{b \in B}\sz{A}_{<b} + \sum_{l \in L}\sz{L}_{<l} + (n + 1)\sz{A} \\
&\quad + (\sz{L \cup A} + 1)(\sz{L \cup B} + 1) + \sz{L} + 1 \ ,
\end{align*}
from which the result follows.
\end{proof}

We now compare our signs with those associated to the overlapping partitions.

\begin{lemma}
	\label{lem:sign:steenrod}
Given $L = \{l_{0}, l_{1}, \dots, l_{i}\} \in \binom{[0, n]}{i + 1}$, let $\mathcal{L}$ be the overlapping partition $[0,l_{0}], [l_{0}, l_{1}], \dots, [l_{i},n]$.
Further, let $A = [0, l_{0}) \cup (l_{1}, l_{2}) \cup \dots$ and $B = (l_{1}, l_{2}) \cup (l_{3}, l_{4}) \cup \dots$. 
Then we have
\[\trmsgn(\mathcal{L}) = \trmsgn(L \cup A \otimes L \cup B) + \sum_{l \in L} \sz{L}_{<l} + \sz{L} + 1 \ .\]
Hence, $\trmsgn(\mathcal{L}) = \trmsgn(L \cup A \otimes L \cup B) + \floor{i/2}$.
\end{lemma}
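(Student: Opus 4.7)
My plan is to compute both $\trmsgn(\mathcal{L})$ and $\trmsgn(L \cup A \otimes L \cup B)$ explicitly modulo $2$ and compare them directly; after that, the final ``hence'' clause follows from a short arithmetic check. The main step is to reinterpret $\mathsf{sign}(w_\mathcal{L})$ as the parity of the number of inversions of the shuffle permutation. Because $w_\mathcal{L}$ lists the elements of $L \cup A = L_0 \cup L_2 \cup \cdots$ first in natural order and then those of $B$ in natural order, the inversions are exactly the pairs $(x, b) \in (L \cup A) \times B$ with $x > b$. Hence
\[
\mathsf{sign}(w_\mathcal{L}) = \sum_{b \in B} \sz{L \cup A}_{>b} \equiv \sum_{b \in B} \sz{L}_{>b} + \sz{A}\sz{B} + \sum_{b \in B}\sz{A}_{<b} \pmod 2,
\]
where I have used $\sz{A}_{>b} = \sz{A} - \sz{A}_{<b}$, valid because $A$ and $B$ are disjoint.

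After this step the terms $\sum_{b \in B}\sz{A}_{<b}$ and $\sum_{l \in L}\sz{L}_{<l}$ match up with the corresponding terms on the right-hand side of the desired identity and drop out. What remains is to verify
\[
\sum_{b \in B} \sz{L}_{>b} + \sz{A}\sz{B} + in \equiv (n+1)\sz{A} + (\sz{L}+1) \pmod 2.
\]
The crucial observation is a parity fact: for every $b \in B$, one has $\sz{L}_{>b} \equiv i \pmod 2$. Indeed, $B$ is the disjoint union of open intervals of the form $(l_{2k}, l_{2k+1})$, and any $b$ in such an interval satisfies $\sz{L}_{>b} = i - 2k$. This collapses the sum to $\sum_{b \in B}\sz{L}_{>b} \equiv i\sz{B} \pmod 2$.

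From here it is routine: substituting $\sz{B} = n + 1 - \sz{L} - \sz{A} = n - i - \sz{A}$ and expanding, both sides of the remaining identity reduce to $(n+1)\sz{A} + i$ modulo $2$ after applying $x^2 \equiv x \pmod 2$ to squares of cardinalities. This establishes the main equality. For the final clause, it suffices to check that $\sum_{l \in L}\sz{L}_{<l} + \sz{L} + 1 = \binom{i+1}{2} + (i+2)$ has the same parity as $\lfloor i/2\rfloor$; both expressions are periodic in $i$ with period $4$ and take the values $0, 0, 1, 1$ on the successive residue classes, so a case-by-case check completes the proof. The main obstacle throughout is careful sign bookkeeping: matching up the three terms defining $\trmsgn(L \cup A \otimes L \cup B)$ with the pieces extracted from $\mathsf{sign}(w_\mathcal{L})$ after its decomposition as an inversion count. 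The key technical input that makes the calculation tractable is the parity observation $\sz{L}_{>b} \equiv i \pmod 2$, which converts an interval-dependent sum into a simple product.
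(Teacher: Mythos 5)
Your proof is correct and takes essentially the same approach as the paper's: both compute $\mathsf{sign}(w_\mathcal{L})$ as an inversion count $\sum_{b \in B}\sz{L \cup A}_{>b}$, both use the key parity fact that $\sz{L}_{>b} \equiv i \pmod 2$ for every $b \in B$ (the paper derives this via the odd/even gap characterisation of $B$, you derive it directly from the interval decomposition $B = \bigcup_k(l_{2k},l_{2k+1})$, which is arguably cleaner and avoids the paper's circular-looking appeal to \cref{lem:boundary=steenrod}), and both finish with routine $\mathbb{Z}/2\mathbb{Z}$ algebra using $x^2 = x$ and $\sz{A}+\sz{B}+\sz{L}=n+1$, together with a four-case parity check for the final ``hence'' clause.
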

\begin{proof}
By definition, the sign associated to the overlapping partition is
\begin{equation}
\label{eq:sign:steenrod:def}
\trmsgn(\mathcal{L}) = \mathsf{sign}(w_{\mathcal{L}}) + in,
\end{equation}
where $w_{\mathcal{L}}$ is the permutation defined in Section~\ref{sect:back:steenrod_co}. One can calculate the sign associated with the permutation $w_{\mathcal{L}}$ as 
\begin{align*}
\mathsf{sign}(w_{\mathcal{L}}) &= \sum_{b \in B}\sz{A \cup L}_{>b} \\
&= \sum_{b \in B} \sz{A}_{>b} + \sum_{b \in B}\sz{L}_{>b} \\
&= \sum_{b \in B} \sz{A}_{<b} + \sz{A}\sz{B} + \sum_{b \in B}\sz{L}_{>b} \ .
\end{align*}
We then note from Lemma~\ref{lem:boundary=steenrod} that $B$ consists of the odd gaps in $L$ for $\sz{L}$ even, and even gaps in $L$ for $\sz{L}$ odd.
Hence, for all $b \in B$, we have that $\sz{L}_{>b} = \sz{L} + 1$ in $\mathbb{Z}/2\mathbb{Z}$, and so
\begin{equation}\label{eq:sign:steenrod:perm}
\mathsf{sign}(w_{\mathcal{L}}) = \sum_{b \in B} \sz{A}_{<b} + \sz{A}\sz{B} + \sz{B}(\sz{L} + 1).
\end{equation}
We then calculate that
\begin{align*}
in &= (\sz{L} + 1)n \\
 &= (\sz{L} + 1)(n + 1) + \sz{L} + 1 \\ 
 &= (\sz{L} + 1)\sz{A \cup B} + \sz{L}^2 + \sz{L} + \sz{L} + 1 \\
 &= (\sz{L} + 1)\sz{A \cup B} + \sz{L} + 1 \ .
\end{align*}
Putting this together with \eqref{eq:sign:steenrod:def} and \eqref{eq:sign:steenrod:perm}, we obtain
\begin{align*}
\trmsgn(\mathcal{L}) &= \sum_{b \in B} \sz{A}_{<b} + \sz{A}\sz{B} + \sz{B}(\sz{L} + 1) + (\sz{L} + 1)\sz{A \cup B} + \sz{L} + 1 \\
&= \sum_{b \in B} \sz{A}_{<b} + \sz{A}\sz{B} + (\sz{L} + 1)\sz{A} + \sz{L} + 1 \\
&= \trmsgn(L \cup A \otimes L \cup B) + \sum_{l \in L} \sz{L}_{<l} + (n + 1)\sz{A} + \sz{A}\sz{B} + (\sz{L} + 1)\sz{A} + \sz{L} + 1 \\
&= \trmsgn(L \cup A \otimes L \cup B) + \sum_{l \in L} \sz{L}_{<l} + \sz{L \cup A}\sz{A} + (\sz{L} + 1)\sz{A} + \sz{L} + 1 \\
&= \trmsgn(L \cup A \otimes L \cup B) + \sum_{l \in L} \sz{L}_{<l} + \sz{A}\sz{A} + \sz{A} + \sz{L} + 1 \\
&= \trmsgn(L \cup A \otimes L \cup B) + \sum_{l \in L} \sz{L}_{<l} + \sz{L} + 1 \ .
\end{align*}
Since $\sz{L} = i + 1$, one can then straightforwardly verify that \[\sum_{l \in L} \sz{L}_{<l} + \sz{L} + 1 = i/2 \in \mathbb{Z}/2\mathbb{Z}\] for $i$ even and $\floor{i/2}$ for $i$ odd.
\end{proof}

We now carry out the sign calculations which feed into Proposition~\ref{prop:key}.

\begin{lemma}
	\label{lem:sign:L_first}
For $k \in L$, we have that
\[\trmsgn(L \cup A \otimes L \cup B) + \sz{L \cup A}_{<k} = \trmsgn((L \setminus k) \cup A \otimes (L \setminus k) \cup (B \cup k)) + \sz{L}_{>k} \ .\]
\end{lemma}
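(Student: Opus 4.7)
The plan is to unfold both sides using the definition of $\trmsgn$ from \cref{const} and verify the identity by direct bookkeeping in $\mathbb{Z}/2\mathbb{Z}$. On the left, since $L \cap A = \emptyset$, we have $\sz{L \cup A}_{<k} = \sz{L}_{<k} + \sz{A}_{<k}$, so the LHS expands as
\[
\sum_{b \in B} \sz{A}_{<b} + \sum_{l \in L}\sz{L}_{<l} + (n+1)\sz{A} + \sz{L}_{<k} + \sz{A}_{<k}.
\]
For the RHS, set $L' := L \setminus k$ and $B' := B \cup \{k\}$. The new element $k$ of $B'$ contributes an extra $\sz{A}_{<k}$ to the first sum, so the RHS expands as
\[
\sum_{b \in B} \sz{A}_{<b} + \sz{A}_{<k} + \sum_{l \in L'}\sz{L'}_{<l} + (n+1)\sz{A} + \sz{L}_{>k}.
\]

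The ``mixed'' terms $\sum_{b \in B}\sz{A}_{<b}$, $(n+1)\sz{A}$, and $\sz{A}_{<k}$ appear identically on both sides and cancel, reducing the lemma to the purely combinatorial identity
\[
\sum_{l \in L}\sz{L}_{<l} - \sum_{l \in L'}\sz{L'}_{<l} + \sz{L}_{<k} = \sz{L}_{>k}
\]
in $\mathbb{Z}/2\mathbb{Z}$, involving only $L$ and the distinguished element $k \in L$. The key observation is that the two sums differ in two ways: the summand indexed by $l = k$ is dropped (contributing $\sz{L}_{<k}$ to the difference), and for each $l \in L'$ with $l > k$ the count $\sz{L'}_{<l}$ is smaller by one than $\sz{L}_{<l}$ (contributing $\sz{L}_{>k}$). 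Hence $\sum_{l \in L}\sz{L}_{<l} - \sum_{l \in L'}\sz{L'}_{<l} = \sz{L}_{<k} + \sz{L}_{>k}$, and substituting reduces the identity to $2\sz{L}_{<k} = 0$, which holds trivially modulo $2$.

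There is no real obstacle to this proof; it is a mechanical but careful unwinding of definitions, and working in $\mathbb{Z}/2\mathbb{Z}$ makes the final combinatorial identity immediate. An entirely analogous pattern—expand, cancel the $A, B$-dependent contributions, and reduce to a trivial mod-$2$ identity about $L$ and $k$—will govern the companion \cref{lem:sign:L_second,lem:sign:A,lem:sign:B} feeding into \cref{prop:key}.
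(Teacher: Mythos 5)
Your proof is correct and takes essentially the same route as the paper: expand the definition of $\trmsgn$, cancel the $A$- and $B$-dependent terms (which coincide on both sides), and reduce to the mod-$2$ identity $\sum_{l\in L}\sz{L}_{<l} = \sum_{l\in L\setminus k}\sz{L\setminus k}_{<l} + \sz{L}_{>k} + \sz{L}_{<k}$, which is exactly the bookkeeping the paper performs in-line starting from the left-hand side.
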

\begin{proof}
We note that \[\sum_{b \in B}\sz{A}_{<b} = \sum_{b \in B \cup k}\sz{A}_{<b} + \sz{A}_{<k}\] and \[\sum_{l \in L}\sz{L}_{<l} = \sum_{l \in L\setminus k} \sz{L}_{<l} + \sz{L}_{<k} \ .\]
Using these, we obtain from the definition of $\trmsgn(L \cup A \otimes L \cup B)$ that
\begin{align*}
\trmsgn(L \cup A \otimes L \cup B) + \sz{L \cup A}_{<k} &= \sum_{b \in B \cup k} \sz{A}_{<b} + \sum_{l \in L\setminus k}\sz{L}_{<l} + (n + 1)\sz{A} \\
&= \sum_{b \in B \cup k} \sz{A}_{<b} + \sum_{l \in L\setminus k}\sz{L\setminus k}_{<l} + \sz{L}_{>k} + (n + 1)\sz{A} \\
&= \trmsgn((L \setminus k) \cup A \otimes (L \setminus k) \cup (B \cup k)) + \sz{L}_{>k} \ .
\end{align*}
\end{proof}

\begin{lemma}
	\label{lem:sign:L_second}
For $k \in L$, we have that
\begin{align*}
&\trmsgn(L \cup A \otimes L \cup B) + \sz{L \cup B}_{<k} + \sz{L \cup A} + 1 \\
&\quad = \trmsgn((L \setminus k) \cup (A \cup k) \otimes (L\setminus k) \cup B) + \sz{L}_{>k} + 1 \ .
\end{align*}
\end{lemma}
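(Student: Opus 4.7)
The strategy mirrors that of the preceding Lemma~\ref{lem:sign:L_first}, but is slightly more delicate: rather than moving $k$ from $L$ into $B$ while leaving $A$ untouched, we are now moving $k$ into $A$. Consequently the element $k$ intervenes in all three summands that define $\trmsgn((L \setminus k) \cup (A \cup k) \otimes (L \setminus k) \cup B)$, whereas only two were affected in the previous lemma. I plan to expand both sides of the asserted identity using the definition of $\trmsgn$ from \cref{const} and verify equality in $\mathbb{Z}/2\mathbb{Z}$.

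Concretely, I will isolate the three changes that occur when $k$ is removed from $L$ and added to $A$. First, $\sum_{b \in B}\sz{A \cup k}_{<b} = \sum_{b \in B}\sz{A}_{<b} + \sz{B}_{>k}$, since each $b \in B$ contributes an extra $1$ exactly when $b > k$. Second, the argument of Lemma~\ref{lem:sign:L_first} shows that $\sum_{l \in L \setminus k}\sz{L \setminus k}_{<l}$ differs from $\sum_{l \in L}\sz{L}_{<l}$ by $\sz{L}_{<k} + \sz{L}_{>k}$ modulo $2$. Third, since $\sz{A \cup k} = \sz{A} + 1$, the term $(n+1)\sz{A}$ picks up an additional $n+1$. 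Summing these three contributions, the difference between the right-hand $\trmsgn$ and the left-hand $\trmsgn$ is
\[
\sz{B}_{>k} + \sz{L}_{<k} + \sz{L}_{>k} + (n+1)  \pmod{2}.
\]

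It then remains to check that the corrections $\sz{L \cup B}_{<k} + \sz{L \cup A} + 1$ on the left match the above expression plus $\sz{L}_{>k} + 1$ on the right. After splitting $\sz{L \cup B}_{<k} = \sz{L}_{<k} + \sz{B}_{<k}$ and $\sz{L \cup A} = \sz{L} + \sz{A}$, cancellation modulo $2$ reduces the claim to the single identity
\[
\sz{B}_{<k} + \sz{B}_{>k} = \sz{L} + \sz{A} + (n+1) + 1  \pmod{2}.
\]
Since $k \in L$ implies $k \notin B$, the left side equals $\sz{B}$, and since $L, A, B$ partition $[0,n]$ we have $\sz{L} + \sz{A} + \sz{B} = n+1$, which finishes the proof.

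The main obstacle is purely bookkeeping: one must correctly track the three sources of change when transferring $k$ from $L$ to $A$ and exploit working modulo $2$ to absorb sign flips. No conceptual difficulty arises beyond this careful accounting, which is why the calculation has been relegated to the appendix.
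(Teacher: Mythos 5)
Your overall strategy — expanding all three summands of $\trmsgn$, isolating the three changes when $k$ moves from $L$ to $A$, and collapsing the leftover correction to a trivial parity identity — is the same as the paper's. The first two displayed corrections (the $\sz{B}_{>k}$ and the $\sz{L}_{<k}+\sz{L}_{>k}$ contributions) and the $(n+1)$ from the third summand are all correct.

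However, there is a parity slip in your displayed ``single identity.'' Working the reduction out: you need
$\sz{L \cup B}_{<k} + \sz{L \cup A} + 1 = \bigl[\sz{B}_{>k} + \sz{L}_{<k} + \sz{L}_{>k} + (n+1)\bigr] + \sz{L}_{>k} + 1$.
The two $+1$'s cancel, the two $\sz{L}_{>k}$'s cancel mod $2$, and after the splittings $\sz{L \cup B}_{<k} = \sz{L}_{<k} + \sz{B}_{<k}$ and $\sz{L \cup A} = \sz{L} + \sz{A}$ one cancels the $\sz{L}_{<k}$'s as well, arriving at
\[
\sz{B}_{<k} + \sz{B}_{>k} = \sz{L} + \sz{A} + (n+1) \pmod{2},
\]
\emph{without} the extra $+1$ you wrote. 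As displayed, your identity reads $\sz{B} \equiv \sz{B} + 1 \pmod 2$ once one substitutes $\sz{B}_{<k}+\sz{B}_{>k}=\sz{B}$ and $\sz{L}+\sz{A}+\sz{B}=n+1$, which is false. Your closing sentence actually verifies the correct identity (the one without the $+1$), so the slip is confined to that one display; with it corrected the argument goes through and coincides with the paper's proof (the paper merely reorganizes the bookkeeping by bundling $(n+1)\sz{A}$ with the correction $\sz{L\cup A}$ from the start to get $(n+1)\sz{A\cup k}+\sz{B}$).
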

\begin{proof}
We break down this calculation into the following three steps.
\begin{align*}
\sum_{b \in B} \sz{A}_{<b} &= \sum_{b \in B} \sz{A \cup k}_{<b} + \sz{B}_{>k} \ , \\
\sum_{l \in L} \sz{L}_{<l} &= \sum_{l \in L \setminus k} \sz{L}_{<l} + \sz{L}_{<k} \ , \\
&= \sum_{l \in L \setminus k} \sz{L \setminus k}_{<l} + \sz{L}_{>k} + \sz{L}_{<k} \ , \\
(n + 1)\sz{A} + \sz{L \cup A} &= (n + 1)\sz{A \cup k} + (n + 1) + \sz{L \cup A} \\
&= (n + 1)\sz{A \cup k} + \sz{B} \ .
\end{align*}
Using these, we obtain that
\begin{align*}
&\trmsgn(L \cup A \otimes L \cup B) + \sz{L \cup A} \\
&\quad = \trmsgn((L \setminus k) \cup (A \cup k) \otimes (L\setminus k) \cup B) + \sz{B}_{>k} + \sz{L}_{<k} + \sz{L}_{>k} + \sz{B} \\
&\quad = \trmsgn((L \setminus k) \cup (A \cup k) \otimes (L\setminus k) \cup B) + \sz{B}_{<k} + \sz{L}_{<k} + \sz{L}_{>k} \ ,
\end{align*}
and hence the result follows.
\end{proof}

\begin{lemma}
	\label{lem:sign:A}
For $k \in A$, we have that
\begin{align*}
&\trmsgn(L \cup A \otimes L \cup B) + \sz{L \cup A}_{<k} \\
&\quad = \trmsgn(L \cup (A \setminus k) \otimes L \cup B) + \sz{L} + \sz{L \cup A \cup B}_{<k} + 1 \ .
\end{align*}
\end{lemma}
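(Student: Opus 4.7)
My plan is to expand both sides of the claimed equation directly from the definition of $\trmsgn$ and verify the identity modulo~$2$. The only substantive subtlety is that removing $k \in A$ from the ambient set $L \cup A \cup B$ shrinks it by one element, so the prefactor ``$n+1$'' appearing in the last summand of $\trmsgn$ also decreases by one; the identity essentially records this effect.

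First I would compare the two $\trmsgn$ values term by term. The middle sum $\sum_{l \in L}\sz{L}_{<l}$ is manifestly unchanged. For the sum over $B$, removing $k$ from $A$ decreases $\sz{A}_{<b}$ by one exactly when $b > k$, so
\[ \sum_{b \in B} \sz{A}_{<b} - \sum_{b \in B} \sz{A \setminus k}_{<b} = \sz{B}_{>k} \ . \]
Writing $N + 1 := \sz{L \cup A \cup B}$, the ambient size for the right-hand tensor is $N$, and expanding $(N+1-1)(\sz{A}-1)$ yields
\[ N \cdot \sz{A \setminus k} \equiv (N+1)\sz{A} + \sz{A} + (N+1) + 1 \pmod{2} \ . \]
Combining the three contributions, I would obtain
\[ \trmsgn(L \cup A \otimes L \cup B) + \trmsgn(L \cup (A \setminus k) \otimes L \cup B) \equiv \sz{B}_{>k} + \sz{A} + (N+1) + 1 \pmod{2} \ . \]

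On the other side, disjointness of $L$, $A$, $B$ gives $\sz{L \cup A \cup B}_{<k} - \sz{L \cup A}_{<k} = \sz{B}_{<k}$, so the non-$\trmsgn$ part of the claimed equation contributes $\sz{L} + \sz{B}_{<k} + 1$ modulo~$2$. It then remains to check
\[ \sz{B}_{>k} + \sz{A} + (N+1) \equiv \sz{L} + \sz{B}_{<k} \pmod{2} \ , \]
which is immediate from $N + 1 = \sz{L} + \sz{A} + \sz{B}$ together with $\sz{B} = \sz{B}_{<k} + \sz{B}_{>k}$ (using $k \notin B$). There is no serious obstacle; the only thing to track carefully is the dependence of the last summand of $\trmsgn$ on the ambient set size when that set shrinks.
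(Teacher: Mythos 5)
Your proposal is correct and follows essentially the same route as the paper's proof: a direct term-by-term expansion of $\trmsgn$ using $\sum_{b\in B}\sz{A}_{<b}-\sum_{b\in B}\sz{A\setminus k}_{<b}=\sz{B}_{>k}$ together with the key observation that the $(n+1)$ in the definition tracks $\sz{L\cup A\cup B}$, so it drops to $n$ for the right-hand tensor, followed by a parity check using $n+1=\sz{L}+\sz{A}+\sz{B}$. You have merely rearranged the bookkeeping into a single ``difference equals difference'' identity, which is a cosmetic variant of what appears in the paper.
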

\begin{proof}
Here we note that
\[\sum_{b \in B} \sz{A}_{<b} = \sum_{b \in B} \sz{A \setminus k}_{<b} + \sz{B}_{>k}\]
and
\begin{align*}
(n + 1)\sz{A} &= n\sz{A} + \sz{A} \\
&= n\sz{A \setminus k} + n + \sz{A} \\
&= n\sz{A \setminus k} + \sz{L} + \sz{B} + 1 \ .
\end{align*}
Using these two facts, we deduce that
\begin{align*}
&\trmsgn(L \cup A \otimes L \cup B) + \sz{L \cup A}_{<k} \\
&\quad = \trmsgn(L \cup (A \setminus k) \otimes L \cup B) + \sz{B}_{>k} + \sz{L} + \sz{B} + 1 + \sz{L \cup A}_{<k} \\
&\quad = \trmsgn(L \cup (A \setminus k) \otimes L \cup B) + \sz{L} + \sz{L \cup A \cup B}_{<k} + 1 \ .
\end{align*}
\end{proof}

\begin{lemma}
	\label{lem:sign:B}
For $k \in B$, we have that
\begin{align*}
&\trmsgn(L \cup A \otimes L \cup B) + \sz{L \cup A} + \sz{L \cup B}_{<k} + 1 \\
&\quad = \trmsgn(L \cup A \otimes L \cup (B\setminus k)) + \sz{L} + \sz{L \cup A \cup B}_{<k} + 1 \ .
\end{align*}
\end{lemma}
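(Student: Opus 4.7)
The plan is to mirror the proof of Lemma~\ref{lem:sign:A} closely, since Lemma~\ref{lem:sign:B} is its natural counterpart when $k$ lies in $B$ instead of $A$. First I would unpack both signs using the definition
\[
\trmsgn(L \cup A \otimes L \cup B) = \sum_{b \in B}\sz{A}_{<b} + \sum_{l \in L}\sz{L}_{<l} + (n+1)\sz{A}\ ,
\]
being careful that on the right-hand side of the lemma the ambient set $L \cup A \cup (B \setminus k)$ has cardinality $n$, so the last summand of $\trmsgn(L \cup A \otimes L \cup (B \setminus k))$ is $n\sz{A}$ rather than $(n+1)\sz{A}$.

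The key intermediate step is the two-line identity
\begin{align*}
\sum_{b \in B} \sz{A}_{<b} &= \sum_{b \in B \setminus k} \sz{A}_{<b} + \sz{A}_{<k}\ , \\
(n+1)\sz{A} &= n\sz{A} + \sz{A}\ ,
\end{align*}
which immediately gives
\[
\trmsgn(L \cup A \otimes L \cup B) = \trmsgn(L \cup A \otimes L \cup (B \setminus k)) + \sz{A}_{<k} + \sz{A}
\]
in $\mathbb{Z}/2\mathbb{Z}$. Substituting this into the left-hand side of the desired identity reduces the lemma to verifying, modulo~$2$, that
\[
\sz{A}_{<k} + \sz{A} + \sz{L \cup A} + \sz{L \cup B}_{<k} = \sz{L} + \sz{L \cup A \cup B}_{<k}\ .
\]

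This final verification follows from the disjointness of $L$, $A$, and $B$: expanding $\sz{L \cup A} = \sz{L} + \sz{A}$, $\sz{L \cup B}_{<k} = \sz{L}_{<k} + \sz{B}_{<k}$, and $\sz{L \cup A \cup B}_{<k} = \sz{L}_{<k} + \sz{A}_{<k} + \sz{B}_{<k}$, the two sides become identical after cancelling the duplicated occurrences of $\sz{L}_{<k}$, $\sz{B}_{<k}$, $\sz{A}_{<k}$ and the pair of $\sz{A}$ terms modulo~$2$. Since the argument is entirely mechanical, I anticipate no conceptual obstacle; the only mild subtlety worth flagging is keeping track of the shift from $(n+1)\sz{A}$ to $n\sz{A}$ caused by removing $k$ from the ambient set, which is the direct analogue of the trick used in the proof of Lemma~\ref{lem:sign:A}.
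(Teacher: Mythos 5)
Your proof is correct and follows essentially the same route as the paper: splitting $\sum_{b\in B}\sz{A}_{<b}$ into the $B\setminus k$ sum plus $\sz{A}_{<k}$, accounting for the $(n+1)\to n$ shift in the last summand, and then using disjointness of $L$, $A$, $B$ to absorb the leftover terms into $\sz{L\cup A\cup B}_{<k}$. The only cosmetic difference is that the paper packages the shift as the single identity $(n+1)\sz{A}+\sz{L\cup A}=n\sz{A}+\sz{L}$, whereas you carry $\sz{A}$ along explicitly and cancel it at the end; the arithmetic is identical.
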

\begin{proof}
Here we use that
\[\sum_{b \in B} \sz{A}_{<b} = \sum_{b \in B \setminus k} \sz{A}_{<b} + \sz{A}_{<k}\]
and $(n + 1)\sz{A} + \sz{L \cup A} = n\sz{A} + \sz{L}$.
This allows us to deduce that
\begin{align*}
&\trmsgn(L \cup A \otimes L \cup B) + \sz{L \cup A} + \sz{L \cup B}_{<k} + 1 \\
&\quad = \trmsgn(L \cup A \otimes L \cup (B\setminus k)) + \sz{A}_{<k} + \sz{L} + \sz{L \cup B}_{<k} + 1 \\
&\quad = \trmsgn(L \cup A \otimes L \cup (B\setminus k)) + \sz{L} + \sz{L \cup A \cup B}_{<k} + 1 \ .
\end{align*}
\end{proof}

\end{document}